%%%%%%%%%%%%%%%%%%%%%%%%%%%%%%%%%%%%%%%%%%%%%%%%%%%%%%%%%%%%%%%%%%%%%%%%%%%%%%
\documentclass[a4paper,12pt]{amsart}
%%%%%%%%%%%%%%%%%%%%%%%%%%%%%%%%%%%%%%%%%%%%%%%%%%%%%%%%%%%%%%%%%%%%%%%%%%%%%%%

\usepackage[T1]{fontenc}
\usepackage{fullpage}
\usepackage[colorlinks]{hyperref}
\usepackage[english]{babel}
\usepackage{csquotes}
\usepackage{mathrsfs}
\usepackage{stmaryrd}
\usepackage[all,cmtip]{xy}
\usepackage{paralist}
\usepackage[shortlabels]{enumitem}
\usepackage{amssymb,amsfonts,amsxtra,amsmath,amsthm}

%%%%%%%%%%%%%%%%%%%%%%%%%%%%%%%%%%%%%%%%%%%%%%%%%%%%%%%%%%%%%%%%%%%%%%%%%%%%%%%

%\usepackage[backend=biber,style=alphabetic,doi=true,url=false]{biblatex}
%\bibliography{residual}

%%%%%%%%%%%%%%%%%%%%%%%%%%%%%%%%%%%%%%%%%%%%%%%%%%%%%%%%%%%%%%%%%%%%%%%%%%%%%%%

\theoremstyle{plain}
\newtheorem{thm}{Theorem}[section]

\newtheorem{cor}[thm]{Corollary} 
\newtheorem{prp}[thm]{Proposition} 
\newtheorem{lem}[thm]{Lemma} 
\theoremstyle{definition}
\newtheorem{dfn}[thm]{Definition}

\theoremstyle{remark} 
\newtheorem{rmk}[thm]{Remark}
\newtheorem{ntn}[thm]{Notation}

%%%%%%%%%%%%%%%%%%%%%%%%%%%%%%%%%%%%%%%%%%%%%%%%%%%%%%%%%%%%%%%%%%%%%%%%%%%%%%%

\newcommand{\NN}{\mathbb{N}}

\newcommand{\CC}{\mathbb{C}}

\newcommand{\I}{\mathcal{I}}
\newcommand{\J}{\mathcal{J}}

\newcommand{\OO}{\mathscr{O}}
\newcommand{\QQ}{\mathscr{Q}}

\newcommand{\mm}{\mathfrak{m}}
\newcommand{\pp}{\mathfrak{p}}
\newcommand{\qq}{\mathfrak{q}}

\newcommand{\reg}{\mathrm{reg}}

\newcommand{\ol}{\overline}
\newcommand{\ul}{\underline}
\newcommand{\wh}{\widehat}
\newcommand{\wt}{\widetilde}

\newcommand{\ideal}[1]{{\left\langle#1\right\rangle}}
\newcommand{\set}[1]{{\left\{#1\right\}}}
\newcommand{\xmid}{\;\middle|\;}
\newcommand{\into}{\hookrightarrow}
\newcommand{\onto}{\twoheadrightarrow}

%%%%%%%%%%%%%%%%%%%%%%%%%%%%%%%%%%%%%%%%%%%%%%%%%%%%%%%%%%%%%%%%%%%%%%%%%%%%%%%

\DeclareMathOperator{\Ass}{Ass}
\DeclareMathOperator{\Ann}{Ann}

\DeclareMathOperator{\coker}{coker}
\DeclareMathOperator{\Der}{Der}
\DeclareMathOperator{\depth}{depth}
\DeclareMathOperator{\End}{End}
\DeclareMathOperator{\Ext}{Ext}
\DeclareMathOperator{\grade}{grade}
\DeclareMathOperator{\Gdim}{G-dim}
\DeclareMathOperator{\height}{height}
\DeclareMathOperator{\Hom}{Hom}
\DeclareMathOperator{\id}{id}
\DeclareMathOperator{\Min}{Min}
\DeclareMathOperator{\pdim}{pdim}
\DeclareMathOperator{\res}{res}
\DeclareMathOperator{\Spec}{Spec}
\DeclareMathOperator{\Supp}{Supp}

%%%%%%%%%%%%%%%%%%%%%%%%%%%%%%%%%%%%%%%%%%%%%%%%%%%%%%%%%%%%%%%%%%%%%%%%%%%%%%%

\numberwithin{equation}{section}

%%%%%%%%%%%%%%%%%%%%%%%%%%%%%%%%%%%%%%%%%%%%%%%%%%%%%%%%%%%%%%%%%%%%%%%%%%%%%%%
\begin{document}
%%%%%%%%%%%%%%%%%%%%%%%%%%%%%%%%%%%%%%%%%%%%%%%%%%%%%%%%%%%%%%%%%%%%%%%%%%%%%%%

\title[Residual duality over Gorenstein rings]
{A residual duality over Gorenstein rings with application to logarithmic differential forms}

\author[M.~Schulze]{Mathias Schulze}
\address{M.~Schulze\\
Department of Mathematics\\
TU Kaiserslautern\\
67663 Kaiserslautern\\
Germany}
\email{\href{mailto:mschulze@mathematik.uni-kl.de}{mschulze@mathematik.uni-kl.de}}

\author[L.~Tozzo]{Laura Tozzo}
\address{L.~Tozzo\\
Department of Mathematics\\
TU Kaiserslautern\\
67663 Kaiserslautern\\
Germany}
\email{\href{mailto:tozzo@mathematik.uni-kl.de}{tozzo@mathematik.uni-kl.de}}

\begin{abstract}
Kyoji Saito's notion of a free divisor was generalized by the first author to reduced Gorenstein spaces and by Delphine Pol to reduced Cohen--Macaulay spaces.
Starting point is the Aleksandrov--Terao theorem:
A hypersurface is free if and only if its Jacobian ideal is maximal Cohen--Macaulay.
Pol obtains a generalized Jacobian ideal as a cokernel by dualizing Aleksandrov's multi-logarithmic residue sequence.
Notably it is essentially a suitably chosen complete intersection ideal that is used for dualizing. 
Pol shows that this generalized Jacobian ideal is maximal Cohen--Macaulay if and only if the module of Aleksandrov's multi-logarithmic differential $k$-forms has (minimal) projective dimension $k-1$, where $k$ is the codimension in a smooth ambient space.
This equivalent characterization reduces to Saito's definition of freeness in case $k=1$.
In this article we translate Pol's duality result in terms of general commutative algebra.
It yields a more conceptual proof of Pol's result and a generalization involving higher multi-logarithmic forms and generalized Jacobian modules.
\end{abstract}

\subjclass[2010]{Primary 13H10; Secondary 13C14, 32A27}

\keywords{Duality, Gorenstein, logarithmic differential form, residue, free divisor}

\maketitle
\tableofcontents

%%%%%%%%%%%%%%%%%%%%%%%%%%%%%%%%%%%%%%%%%%%%%%%%%%%%%%%%%%%%%%%%%%%%%%%%%%%%%%%
\section{Introduction}
%%%%%%%%%%%%%%%%%%%%%%%%%%%%%%%%%%%%%%%%%%%%%%%%%%%%%%%%%%%%%%%%%%%%%%%%%%%%%%%

Logarithmic differential forms along hypersurfaces and their residues were introduced by Kyoji Saito (see \cite{Sai80}).
They are part of his theory of primitive forms and period mappings where the hypersurface is the discriminant of a universal unfolding of a function with isolated critical point (see \cite{Sai81,Sai83}). 
The special case of normal crossing divisors appeared earlier in Deligne's construction of mixed Hodge structures (see \cite{Del71}).
Here the logarithmic differential $1$-forms form a locally free sheaf.
In general a divisor with this property is called a free divisor.
Further examples include plane curves (see \cite[(1.7)]{Sai80}), unitary reflection arrangements and their discriminants (see \cite[Thm.~C]{Ter83}) and discriminants of versal deformations of isolated complete intersection singularities and space curves (see \cite[(6.13)]{Loo84} and \cite{vSt95}).
Free divisors also occur as discriminants in prehomogeneous vector spaces (see \cite{GMS11}).
In case of hyperplane arrangements the study of freeness attracted a lot of attention (see \cite{Yos14}).

%%%%%%%%%%%%%%%%%%%%%%%%%%%%%%%%%%%%%%%%%%%%%%%%%%%%%%%%%%%%%%%%%%%%%%%%%%%%%%%
\smallskip

Let $D$ be a germ of reduced hypersurface in $Y\cong(\CC^n,0)$ defined by $h\in\OO_Y$.
The $\OO_Y$-modules $\Omega^q(\log D)$ of logarithmic differential $q$-forms along $D$ and the $\OO_D$-modules $\omega_D^p$ of regular meromorphic differential $p$-forms on $D$ fit into a short exact logarithmic residue sequence (see \cite[\S2]{Sai80} and \cite[\S4]{Ale88})
\[
\xymatrix{
0\ar[r] & \Omega_Y^q\ar[r] & \Omega^q(\log D)\ar[r]^-{\res_D^q} & \omega_D^{q-1}\ar[r] & 0.
}
\]
Denoting by $\nu_D\colon\wt D\to D$ the normalization of $D$, $(\nu_D)_*\OO_{\wt D}\subseteq\omega_D^0$ (see \cite[(2.8)]{Sai80}). 
For plane curves Saito showed that equality holds exactly for normal crossing curves (see \cite[(2.11)]{Sai80}).
Granger and the first author (see \cite{GS14}) generalized this fact and thus extended the L\^e--Saito Theorem (see \cite{LS84}) by an equivalent algebraic property.
They showed that $(\nu_D)_*\OO_{\wt D}=\omega_D^0$ if and only if $D$ is normal crossing in codimension one, that is, outside of an analytic subset of $Y$ of codimension at least $3$.
The proof uses the short exact sequence 
\[
\xymatrix{
0 & \J_D\ar[l] & \Theta_Y\ar[l]_-{\ideal{-,dh}} & \Der(-\log D)\ar[l] & 0\ar[l]
}
\]
obtained as the $\OO_Y$-dual of the logarithmic residue sequence.
It involves the Jacobian ideal $\J_D$ of $D$, the $\OO_Y$-module $\Theta_Y:=\Der_\CC(\OO_Y)\cong(\Omega_Y^1)^*$ of vector fields on $Y$ and its submodule $\Der(-\log D)\cong\Omega^1(\log D)^*$ of logarithmic vector fields along $D$. 
It is shown that $\omega_D^0=\J_D^*$ and that $\J_D=(\omega_D^0)^*$ if $D$ is a free divisor.
In fact freeness of $D$ is equivalent to $\J_D$ being a Cohen--Macaulay ideal by the Aleksandrov--Terao theorem (see \cite[\S2]{Ale88} and \cite[\S2]{Ter80}).

As observed by first author (see \cite{Sch16}) the inclusion $(\nu_D)_*\OO_{\wt D}\subseteq\omega_D^0$ can be seen as $(\nu_D)_*\omega_{\wt D}^0\into\omega_D^0$.
He showed that $(\nu_X)_*\omega_{\wt X}^0=\omega_X^0$ is equivalent to $X$ being normal crossing in codimension one for reduced equidimensional spaces $X$ which are free in codimension one.
Here freeness means Gorenstein with Cohen--Macaulay $\omega$-Jacobian ideal.
As the latter coincides with the Jacobian ideal for complete intersections (see \cite[Prop.~1]{Pie79}), this generalizes the classical freeness of divisors which holds true in codimension one.

%%%%%%%%%%%%%%%%%%%%%%%%%%%%%%%%%%%%%%%%%%%%%%%%%%%%%%%%%%%%%%%%%%%%%%%%%%%%%%%
\smallskip

Multi-logarithmic differential forms generalize Saito's logarithmic differential forms replacing hypersurfaces $D\subseteq Y$ by subspaces $X\subseteq Y$ of codimension $k\ge2$.
They were first introduced with meromorphic poles along reduced complete intersections by Aleksandrov and Tsikh (see \cite{AT01,AT08}), later with simple poles by Aleksandrov (see \cite[\S3]{Ale12}) and recently along reduced Cohen--Macaulay and reduced equidimensional spaces by Aleksandrov (see \cite[\S10]{Ale14}) and by Pol (see \cite[\S4.1]{Pol16}).
The precise relation of the forms with simple and meromorphic poles was clarified by Pol (see \cite[Prop.~3.1.33]{Pol16}).
Here we consider only multi-logarithmic forms with simple poles.

The $\OO_Y$-modules $\Omega^q(\log X/C)$ of multi-logarithmic $q$-forms on $Y$ along $X$ depend on the choice of divisors $D_1,\dots,D_k$ defining a reduced complete intersection $C=D_1\cap\dots\cap D_k\subseteq Y$ such that $X\subseteq C$.
Consider the divisor $D=D_1\cup\dots\cup D_k$ defined by $h=h_1\cdots h_k\in\OO_Y$.
Due to Aleksandrov and Pol there is a multi-logarithmic residue sequence
\begin{equation}\label{126}
\xymatrix{
0\ar[r] & \Sigma\Omega_Y^q\ar[r] & \Omega^q(\log X/C)\ar[r]^-{\res_{X/C}^q} & \omega_X^{q-k}\ar[r] & 0
}
\end{equation}
where $\Sigma=\I_C(D)$ is obtained from the ideal $\I_C$ of $C\subseteq Y$ and $\omega_X^p$ is the $\OO_X$-module of regular meromorphic $p$-forms on $X$ (see \cite[\S10]{Ale14} and \cite[\S4.1.3]{Pol16}).
Pol introduced an $\OO_Y$-module $\Der^k(-\log X/C)$ of logarithmic $k$-vector fields on $Y$ along $X$ and a kind of Jacobian ideal $\J_{X/C}$ of $X$ that fit into the short exact sequence dual to \eqref{126} for $q=k$ 
\begin{equation}\label{128}
\xymatrix{
0 & \J_{X/C}\ar[l] & \Theta_Y^k\ar[l]_-{\ideal{-,\alpha_X}} & \Der^k(-\log X/C)\ar[l] & 0\ar[l]
}
\end{equation}
where $\Theta_Y^q=\bigwedge_{\OO_Y}^q\Theta_Y$ and $\begin{bmatrix}\alpha_X\\h_1,\dots,h_k\end{bmatrix}\in\omega_X^0$ is a fundamental form of $X$ (see \cite[\S4.2.2-3]{Pol16}).
Notably the duality applied here is $-^\Sigma=\Hom_{\OO_Y}(-,\Sigma)$.
Pol showed that Cohen--Macaulayness of $\J_{X/C}$ serves as a further generalization of freeness.
In fact the property is independent of $C$ (see \cite[Prop.~4.2.21]{Pol16}) and $\J_{X/C}$ coincides with the $\omega$-Jacobian ideal in case $X$ is Gorenstein (see \cite[\S4.2.5]{Pol16}).
By relating $\Sigma$- and $\OO_Y$-duality Pol established the following major result (see \cite[Thm.~4.2.22]{Pol16} or \cite{Pol15}).
In particular it generalizes Saito's original definition of freeness to the case $k>1$.

%%%%%%%%%%%%%%%%%%%%%%%%%%%%%%%%%%%%%%%%%%%%%%%%%%%%%%%%%%%%%%%%%%%%%%%%%%%%%%%

\begin{thm}[Pol]\label{111}
Let $X\subseteq C\subseteq Y\cong(\CC^n,0)$ where $X$ is a reduced Cohen--Macaulay germ and $C$ a complete intersection germ, both of codimension $k\ge1$ in $Y$.
Then
\[
\pdim(\Omega^k(\log X/C))\ge k-1
\]
with equality equivalent to freeness of $X$.
\end{thm}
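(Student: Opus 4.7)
The plan is to dualize the residue sequence \eqref{126} at $q=k$ via $\Hom_{\OO_Y}(-,\Sigma)$, match it with \eqref{128}, and transfer the projective-dimension question about $\Omega^k(\log X/C)$ to one about $\J_{X/C}$.

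First I compute the projective dimensions of the outer terms in \eqref{126}. Since $D$ is a principal divisor, $\OO_Y(D)\cong\OO_Y$, so $\Sigma\cong\I_C$, and the Koszul resolution of $\OO_C$ yields $\pdim_{\OO_Y}(\Sigma\Omega_Y^k)=\pdim_{\OO_Y}\I_C=k-1$. By Auslander--Buchsbaum, $\pdim_{\OO_Y}\omega_X^0=k$, as $\omega_X^0$ is a maximal Cohen--Macaulay $\OO_X$-module. The residue sequence already gives the upper bound $\pdim\Omega^k(\log X/C)\le k$, but the lower bound is invisible from it.

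Next I apply $\Hom_{\OO_Y}(-,\Sigma)$ to the residue sequence. Torsion considerations give $\Hom(\omega_X^0,\Sigma)=0$, and reflexivity of $\I_C$ in the regular ring $\OO_Y$ gives $\Hom(\Sigma,\Sigma)\cong\OO_Y$, whence $\Hom(\Sigma\Omega_Y^k,\Sigma)\cong\Theta_Y^k$. Combined with Pol's identification $\Hom(\Omega^k(\log X/C),\Sigma)\cong\Der^k(-\log X/C)$, this recovers \eqref{128}. Since $\Theta_Y^k$ is free, \eqref{128} immediately yields
\[
\pdim_{\OO_Y}\Der^k(-\log X/C)=\pdim_{\OO_Y}\J_{X/C}-1.
\]

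The crux is a residual duality over the Gorenstein ring $\OO_Y$ with dualizing object $\Sigma$, which I would establish as the main technical result of the paper: the functor $\Hom_{\OO_Y}(-,\Sigma)$ is, on the class of modules arising as middle terms of extensions of a maximal Cohen--Macaulay $\OO_X$-module by a twist of $\I_C$, a perfect, projective-dimension-preserving duality. Concretely, the target equality is
\[
\pdim_{\OO_Y}\Omega^k(\log X/C)=\pdim_{\OO_Y}\Der^k(-\log X/C),
\]
which combined with the previous display gives $\pdim\Omega^k(\log X/C)=\pdim\J_{X/C}-1$. Now $\J_{X/C}$ is naturally an ideal of $\OO_X$, hence supported on $X$; so $\depth_{\OO_Y}\J_{X/C}\le n-k$, and Auslander--Buchsbaum forces $\pdim_{\OO_Y}\J_{X/C}\ge k$, with equality if and only if $\J_{X/C}$ is maximal Cohen--Macaulay over $\OO_X$, i.e., $X$ is free. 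Substituting gives $\pdim\Omega^k(\log X/C)\ge k-1$ with equality if and only if $X$ is free.

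The main obstacle is the $\Sigma$-reflexivity and projective-dimension preservation. My approach is to compare $\Hom(-,\Sigma)$ with the Gorenstein duality $\Hom(-,\OO_Y)$ through the short exact sequence $0\to\I_C\to\OO_Y\to\OO_C\to 0$. The Gorensteinness of $\OO_C$ (since $C$ is a complete intersection) controls the Ext shifts via $\Ext^k_{\OO_Y}(\OO_C,\OO_Y)\cong\OO_C$, and a spectral-sequence comparison of the two duality functors, together with the explicit projective dimensions of $\omega_X^0$ and $\Sigma\Omega_Y^k$, should force the projective dimensions on either side of the duality to agree.
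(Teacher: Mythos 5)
Your outer steps are sound: $\pdim(\Sigma\Omega_Y^k)=k-1$, the identification of the $\Sigma$-dual of \eqref{126} with \eqref{128}, the syzygy relation $\pdim\Der^k(-\log X/C)=\pdim\J_{X/C}-1$, and the Auslander--Buchsbaum argument giving $\pdim_{\OO_Y}\J_{X/C}\ge k$ with equality if and only if $\J_{X/C}$ is maximal Cohen--Macaulay. The gap is exactly the step you defer to a ``residual duality'': the equality $\pdim\Omega^k(\log X/C)=\pdim\Der^k(-\log X/C)$. No such projective-dimension-preserving property of $\Hom_{\OO_Y}(-,\Sigma)$ is proved in the paper, and none can hold in this generality: the paper's Theorem~\ref{60} and Corollary~\ref{61} are deliberately only \emph{threshold} statements ($\Gdim(M)\le k-1$ on one side if and only if maximal Cohen--Macaulayness on the other), never an equality of projective dimensions. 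Dualities do not preserve projective dimension even in the simplest setting: over regular $\OO_Y$ of dimension $n\ge 4$, a second syzygy $M$ of the residue field is reflexive with $\pdim M=n-2$ while $\pdim M^*\le 1$. Above the threshold, the two sides of your claimed equality are controlled by unrelated invariants: the residue sequence shows that $\pdim\Omega^k(\log X/C)$ is governed by $\depth\omega_X^0$, where $\omega_X^0=\Hom_{\OO_X}(\J_{X/C},\omega_X)$ by Theorem~\ref{0}, whereas $\pdim\Der^k(-\log X/C)=\pdim\J_{X/C}-1$ is governed by $\depth\J_{X/C}$ itself; since a dual $\Hom_{\OO_X}(-,\omega_X)$ is insensitive to modifications of $\J_{X/C}$ in dimension $\le\dim X-2$, nothing ties these two depths together once $\J_{X/C}$ fails to be maximal Cohen--Macaulay.

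Worse, the auxiliary facts you invoke to make the crux plausible are false, and your proposed spectral-sequence argument explicitly leans on them (``the explicit projective dimensions of $\omega_X^0$\dots''). The module $\omega_X^0$ is \emph{not} maximal Cohen--Macaulay in general, and the bound $\pdim\Omega^k(\log X/C)\le k$ fails: already for $k=1$, a generic central arrangement $D$ of $m\ge n+1$ hyperplanes in $\CC^n$ with $n\ge4$ has $\pdim\Omega^1(\log D)=n-2>1$ (Rose--Terao), and then the residue sequence forces $\pdim\omega_D^0=n-2$, i.e.\ $\depth\omega_D^0=2<\dim D$. In general one only gets $\depth\omega_X^0\ge\min(2,\dim X)$, and maximal Cohen--Macaulayness of $\omega_X^0$ is \emph{equivalent} to $\Gdim\le k$ (Lemma~\ref{45}) --- it is part of what must be proved, not an input. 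What replaces your crux in the paper is the pair: (i) Lemma~\ref{45} (Depth Lemma plus Auslander--Bridger): $W=\omega_X^0$ is maximal Cohen--Macaulay if and only if $\Gdim(M)\le k$, and in that case $\Gdim(M)\le k-1$ if and only if $\Ext^k_{\OO_Y}(M,\OO_Y)=0$; and (ii) Proposition~\ref{18} and Corollary~\ref{49}, which identify $\Ext^{k-1}_{\OO_Y}(M,\OO_Y)\cong\ker\ol\alpha$ and $\Ext^k_{\OO_Y}(M,\OO_Y)\cong\coker\bigl(\nu\colon V\to W^{\ol\vee}\bigr)$, so that, using $I$-reflexivity of $M$ (Proposition~\ref{58}, giving $W=V^{\ol\vee}$), the vanishing of $\Ext^k$ becomes equivalent to reflexivity of $V=\J_{X/C}$, hence to its maximal Cohen--Macaulayness once $W$ is maximal Cohen--Macaulay. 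Note also that the lower bound $\ge k-1$ comes from $\Ext^{k-1}_{\OO_Y}(M,\OO_Y)\cong\ker\ol\alpha\ne 0$ (Lemma~\ref{124}), not from any substitution. If you want to keep your outline, the statement to aim for is this Ext-identification at the single degrees $k-1$ and $k$, not a projective-dimension-preserving duality.
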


%%%%%%%%%%%%%%%%%%%%%%%%%%%%%%%%%%%%%%%%%%%%%%%%%%%%%%%%%%%%%%%%%%%%%%%%%%%%%%%

In \S\ref{11} we pursue the main objective of this article: a translation of Theorem~\ref{111} in terms of general commutative algebra.
The role of $\OO_Y\onto\OO_C=\OO_Y/\I_C$ is played by a map of Gorenstein rings $R\to\ol R=R/I$ of codimension $k\ge2$.
For dualizing we use
\[
-^I=\Hom_R(-,I),\quad -^\vee=\Hom_R(-,\omega_R),\quad-^{\ul\vee}=\Hom_{\ol R}(-,\ol\omega_R)
\]
where $\omega_R$ is a canonical module for $R$ and $\ol\omega_R=\ol R\otimes_R\omega_R$, which is a canonical module for $\ol R$ due to the Gorenstein hypothesis (see Notation~\ref{99}).
Modelled after the multi-logarithmic residue sequence~\eqref{126} along $X=C$ we define an $I$-free approximation of a finitely generated $R$-module $M$ as a short exact sequence
\[
\xymatrix{
0\ar[r] & IF\ar[r]^-\iota & M\ar[r] & W\ar[r] & 0
}
\]
where $F$ is free and $W$ is an $\ol R$-module.
More precisely $M$ plays the role of $\Omega^q(\log X/C)(-D)$ which, as opposed to $\Omega^q(\log X/C)$, is independent of the choice of $D$.
The $I$-dual sequence
\[
\xymatrix{
0 & V\ar[l] & F^\vee\ar[l]_-\alpha & M^I\ar[l]_-\lambda & 0\ar[l]
}
\]
plays the role of the $\Sigma$-dual sequence~\eqref{128} for $X=C$.
In Proposition~\ref{58} we show that $M$ is $I$-reflexive if and only if $W$ is the $\ol R$-dual of $V$.
Our main result is

%%%%%%%%%%%%%%%%%%%%%%%%%%%%%%%%%%%%%%%%%%%%%%%%%%%%%%%%%%%%%%%%%%%%%%%%%%%%%%%

\begin{thm}\label{60}
Let $R$ be a Gorenstein local ring and let $I$ be an ideal of $R$ of height $k\ge2$ such that $\ol R=R/I$ is Gorenstein.
Consider an $I$-free approximation
\[
\xymatrix{
0\ar[r] & IF\ar[r]^-\iota & M\ar[r]^-\rho & W\ar[r] & 0
}
\]
of an $I$-reflexive finitely generated $R$-module $M$ with $W\ne0$ and the corresponding $I$-dual 
\[
\xymatrix{
0 & V\ar[l] & F^\vee\ar[l]_-\alpha & M^I\ar[l]_-\lambda & 0.\ar[l]
}
\]
Then $W=V^{\ol\vee}$ and $V$ is a maximal Cohen--Macaulay $\ol R$-module if and only if $\Gdim(M)\le k-1$.
In this latter case $V=W^{\ol\vee}$ is ($\ol\omega_R$-)reflexive.
Unless $\ol\alpha:=\ol R\otimes\alpha$ is injective, $\Gdim(M)\ge k-1$.
\end{thm}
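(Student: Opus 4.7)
The plan is to dualize the $I$-free approximation with $-^\vee=\Hom_R(-,\omega_R)$ and read off the vanishing $\Ext^j_R(M,R)=0$ for $j\ge k$, which by the Auslander--Bridger formula over Gorenstein $R$ characterizes $\Gdim(M)\le k-1$. The outer Ext terms are tractable: from $0\to IF\to F\to\ol F\to 0$ and the Gorenstein hypothesis on $\ol R$ (which gives $\Ext^k_R(\ol R,R)\cong\ol\omega_R\cong\ol R$, concentrated in degree $k$), one obtains $\Ext^j_R(IF,R)=0$ for $j\ge 1$, $j\ne k-1$, and $\Ext^{k-1}_R(IF,R)\cong\ol F$. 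Because $\ol R$ is perfect of grade $k$ over $R$, a standard change-of-rings argument gives $\Ext^j_R(W,R)=0$ for $j<k$ and $\Ext^j_R(W,R)\cong\Ext^{j-k}_{\ol R}(W,\ol\omega_R)$ for $j\ge k$. Feeding these into the Ext long exact sequence of the $I$-free approximation produces $\Ext^j_R(M,R)=0$ for $1\le j\le k-2$, the four-term exact sequence
\[
0\to \Ext^{k-1}_R(M,R)\to\ol F\xrightarrow{\psi} W^{\ol\vee}\to \Ext^k_R(M,R)\to 0,
\]
and isomorphisms $\Ext^j_R(M,R)\cong\Ext^{j-k}_{\ol R}(W,\ol\omega_R)$ for $j\ge k+1$.

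The key step is to identify $\psi$ with the natural composition $\ol F\xrightarrow{\ol\alpha}V\hookrightarrow V^{\ol\vee\ol\vee}=W^{\ol\vee}$. For the second factor, I would apply $\Hom_R(-,I)$ to the $I$-free approximation: since $W$ is an $\ol R$-module and $\grade I\ge 2$, one has $W^I=0$, $(IF)^I\cong F^\vee$ (via $\Hom_R(I,I)=R$), and $\Ext^1_R(W,I)\cong W^{\ol\vee}$ (from the structural sequence $0\to I\to R\to\ol R\to 0$, using $\Ext^1_R(W,R)=0$). The resulting LES forces $V=\coker\lambda$ to coincide with the image of $F^\vee$ in $W^{\ol\vee}$, producing the injection $V\hookrightarrow W^{\ol\vee}$; by naturality together with $W=V^{\ol\vee}$ (Proposition~\ref{58}, a consequence of $I$-reflexivity of $M$), this injection is the canonical biduality map $V\to V^{\ol\vee\ol\vee}$. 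A parallel comparison of the two long exact sequences along $I\hookrightarrow R$ then pins down $\psi$; this comparison of connecting homomorphisms is where I expect the main technical work to lie.

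With $\psi$ identified the equivalence is immediate. Since $\ol\alpha$ is always surjective (being obtained from $\alpha$ by applying $\ol R\otimes_R-$) and $V\hookrightarrow W^{\ol\vee}$ is always injective, $\psi$ is surjective if and only if $V=W^{\ol\vee}$. Therefore $\Ext^j_R(M,R)=0$ for all $j\ge k$ is equivalent to the conjunction that $W$ is MCM over $\ol R$ and $V=W^{\ol\vee}$. If $V$ is MCM then Gorenstein reflexivity gives $V=V^{\ol\vee\ol\vee}=W^{\ol\vee}$ and $W=V^{\ol\vee}$ is also MCM, so both conditions hold; conversely, the two conditions together make $V=W^{\ol\vee}$ MCM since $W$ is. This simultaneously delivers the reflexivity $V=W^{\ol\vee}$ claimed in the $\Gdim(M)\le k-1$ case.

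For the final assertion, injectivity of $V\hookrightarrow W^{\ol\vee}$ identifies $\ker\psi$ with $\ker\ol\alpha$, so $\Ext^{k-1}_R(M,R)\cong\ker\ol\alpha$. If $\ol\alpha$ is not injective this group is nonzero, forcing $\Gdim(M)\ge k-1$ by Auslander--Bridger.
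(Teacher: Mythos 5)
Your outline follows essentially the same route as the paper: dualize the $I$-free approximation by $\Hom_R(-,\omega_R)$, compute the outer terms of the long exact sequence, and reduce the whole theorem to identifying the connecting homomorphism $\psi\colon\ol F\to W^{\ol\vee}$ with the composite of the surjection $\ol\alpha$ and an injection $V\into W^{\ol\vee}$ built from the $\Hom_R(-,I)$ long exact sequence (your $\Hom_R(-,I)$ is the paper's $-^I$ after fixing $\omega_R=R$). Your computation of the outer terms is correct, your construction of the injection reproduces Lemma~\ref{36} and diagram~\eqref{24}, and your endgame is sound. In fact, using the change-of-rings isomorphisms $\Ext^j_R(W,R)\cong\Ext^{j-k}_{\ol R}(W,\ol\omega_R)$ for $j\ge k$ together with the characterization of maximal Cohen--Macaulayness by vanishing of $\Ext^i_{\ol R}(W,\ol\omega_R)$ for $i\ge1$ is a mild streamlining of the paper's Lemma~\ref{45}, which instead chases depths through the two short exact sequences via the Auslander--Bridger formula and the Depth Lemma. (Aside: the equivalence $\Gdim(M)\le k-1\iff\Ext^j_R(M,R)=0$ for $j\ge k$ is the identity~\eqref{62}, $\Gdim(M)=\max\set{i\xmid\Ext^i_R(M,R)\ne0}$, not the Auslander--Bridger formula; the fact itself is what you need and is correct.)

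The genuine gap is exactly the step you flag and defer: the identification of $\psi$ with $\nu\circ\ol\alpha$. This is not a routine naturality check; it is the bulk of the paper's work, occupying all of \S\ref{21}. The paper fixes an injective resolution $E^\bullet$ of $\omega_R$, identifies $\Ext^{k-1}_R(IF,\omega_R)\cong F^*\otimes_R\omega_{\ol R}$ through a chain of explicit isomorphisms (Lemmas~\ref{10}, \ref{12}, \ref{31}), identifies $\Ext^k_R(W,\omega_R)\cong\Hom_{\ol R}(W,\omega_{\ol R})$ by a double-complex/spectral-sequence argument (Lemma~\ref{13}), chases the connecting homomorphism through all of these identifications on representatives (Proposition~\ref{18}), and only then matches the result with $\nu\circ\ol\alpha$ by choosing compatible generators of $\omega_R$ and $\omega_{\ol R}$ (Corollary~\ref{49}). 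Two smaller asserted-but-unproved compatibilities sit on top of this: first, your claim that ``by naturality'' the injection $V\into W^{\ol\vee}$ becomes the biduality map $\delta_{V,\ol\omega_R}$ under the identification $W=V^{\ol\vee}$ is precisely the paper's Lemma~\ref{50} combined with Corollary~\ref{43}; second, you tacitly equate the embedding $V\into\Ext^1_R(W,I)$ coming from your $\Hom_R(-,I)$ long exact sequence with the map $\nu$ appearing in Proposition~\ref{58}, a point the paper addresses separately in the remark following Lemma~\ref{36}. None of your steps would fail --- everything you assert is true and is proved in the paper --- but the proof as proposed is incomplete at its crux, and what is missing is not an afterthought but the main technical content of the result.
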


%%%%%%%%%%%%%%%%%%%%%%%%%%%%%%%%%%%%%%%%%%%%%%%%%%%%%%%%%%%%%%%%%%%%%%%%%%%%%%%

Due to the Gorenstein hypothesis, Theorem~\ref{60} applies to the complete intersection ring $\ol R=\OO_C$, but in general not to $\ol R=\OO_X$.
In \S\ref{65} we describe a construction to restrict the support of an $I$-free approximation to the locus defined by an ideal $J\unlhd R$ with $I\subseteq J$. 
Lemma~\ref{76} shows that it is made in a way such that the multi-logarithmic residue sequence along $X$ is obtained from that along $C$ by restricting with $J=\I_X$.
Corollary~\ref{61} extends Theorem~\ref{60} to this generalized setup.

%%%%%%%%%%%%%%%%%%%%%%%%%%%%%%%%%%%%%%%%%%%%%%%%%%%%%%%%%%%%%%%%%%%%%%%%%%%%%%%
\smallskip 

In \S\ref{51} we apply our results to multi-logarithmic forms.
We define $\OO_Y$-submodules $\Der^q(-\log X)\subseteq\Theta_Y^q$ of logarithmic $q$-vector fields on $Y$ along $X$ independent of $C$ and show that $\Der^k(-\log X)=\Der^k(-\log X/C)$.
We further define Jacobian $\OO_X$-modules $\J_X^{n-q}\subseteq\OO_X\otimes_{\OO_Y}\Theta_Y^{q-k}$ of $X$ independent of $C$ and $Y$ such that $\J_X^{\dim X}=\J_{X/C}$.
The $\Sigma$-dual of the multi-logarithmic residue sequence reads
\[
\xymatrix{
0 & \J_X^{n-q}\ar[l] & \Theta_Y^q\ar[l]_-{\alpha^X} & \Der^q(-\log X)\ar[l] & 0\ar[l]
}
\]
where $\alpha^X$ is contraction by $\alpha_X$.
As a consequence of Corollary~\ref{61} we obtain the following result which is due to Pol in case $q=k$ (see \cite[Prop.~4.2.17, Thm.~4.2.22]{Pol16}).

%%%%%%%%%%%%%%%%%%%%%%%%%%%%%%%%%%%%%%%%%%%%%%%%%%%%%%%%%%%%%%%%%%%%%%%%%%%%%%%

\begin{thm}\label{0}
Let $X\subseteq C\subseteq Y\cong(\CC^n,0)$ where $X$ is a reduced Cohen--Macaulay germ and $C$ a complete intersection germ, both of codimension $k\ge2$ in $Y$.
For $k\le q<n$, $\omega_X^{q-k}=\Hom_{\OO_X}(\J_X^{n-q},\omega_X)$ where $\omega_X=\Hom_{\OO_C}(\OO_X,\OO_C)(D)$ and $\pdim(\Omega^q(\log X/C))\ge k-1$.
Equality holds if and only if $\J_X^{n-q}$ is maximal Cohen--Macaulay.
In this latter case $\J_X^{n-q}=\Hom_{\OO_X}(\omega_X^{q-k},\omega_X)$ is $\omega_X$-reflexive.
\end{thm}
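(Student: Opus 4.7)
The plan is to reduce Theorem~\ref{0} to Corollary~\ref{61} applied to the multi-logarithmic residue sequence~\eqref{126}, taking $R=\OO_Y$, $\ol R=\OO_C$, $I=\I_C$ of height $k$, and restricting support via $J=\I_X\supseteq\I_C$. First I would produce an $I$-free approximation by twisting~\eqref{126} by $\OO_Y(-D)$: since $h=h_1\cdots h_k$ is a non-zerodivisor, multiplication by $h$ is exact and preserves projective dimension, and it carries $\Sigma=\I_C(D)$ to $\I_C$. This yields
\[
\xymatrix{
0\ar[r] & \I_C\cdot F\ar[r] & M\ar[r]^-{\res} & \omega_X^{q-k}(-D)\ar[r] & 0
}
\]
with $F=\Omega_Y^q(-D)$ free over $\OO_Y$, $M=\Omega^q(\log X/C)(-D)\cong\Omega^q(\log X/C)$, and $\OO_X$-module cokernel supported on $X$, so the support-restriction construction of \S\ref{65} applies. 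The hypothesis $W\ne0$ of Corollary~\ref{61} holds because $k\le q<n$.

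Second, I would identify the $I$-dual sequence with the $\Sigma$-dual sequence from \S\ref{51}. Using the definitions of $\Der^q(-\log X)\subseteq\Theta_Y^q$ and of $\J_X^{n-q}$ together with the contraction pairing induced by the fundamental form $\alpha_X$, one has $M^I\cong\Der^q(-\log X)$ and the corresponding $I$-dual reads
\[
\xymatrix{
0 & \J_X^{n-q}\ar[l] & \Theta_Y^q\ar[l]_-{\alpha^X} & \Der^q(-\log X)\ar[l] & 0,\ar[l]
}
\]
so that $V=\J_X^{n-q}$. The $I$-reflexivity of $M$ I would verify via Proposition~\ref{58}, exploiting $\OO_Y$-reflexivity of $\Theta_Y^q$ and $\Der^q(-\log X)$ on the regular ambient $Y$.

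Finally, Corollary~\ref{61} yields $W=V^{\ul\vee}$, which after untwisting by $D$ reads $\omega_X^{q-k}=\Hom_{\OO_X}(\J_X^{n-q},\omega_X)$; the inequality $\Gdim(M)\ge k-1$ with equality iff $\J_X^{n-q}$ is maximal Cohen--Macaulay; and the $\omega_X$-reflexivity $\J_X^{n-q}=\Hom_{\OO_X}(\omega_X^{q-k},\omega_X)$ in the equality case. Since $\OO_Y$ is regular local, Gorenstein dimension coincides with projective dimension for finitely generated modules, so the bound translates to $\pdim(\Omega^q(\log X/C))\ge k-1$; the unconditional lower bound uses the ``$\ol\alpha$ not injective'' clause of Theorem~\ref{60}, which I would deduce from a direct inspection of the $\OO_C$-reduction of $\alpha^X$. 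The main obstacle I anticipate is the second step: Pol's identifications in \cite{Pol16} treat the top-degree case $q=k$, and extending them to general $k\le q<n$ demands a careful verification using the higher Jacobian modules and $q$-vector fields introduced in \S\ref{51}.
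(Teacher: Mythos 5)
Your overall strategy---reduce to Corollary~\ref{61} with $R=\OO_Y$, $I=\I_C$, $J=\I_X$, identify the $I$-dual with the Jacobian sequence~\eqref{112}, and convert $\Gdim$ into $\pdim$ over the regular ring $\OO_Y$---is indeed the paper's strategy, and most of your outline matches its proof. But there is a genuine gap at the one step on which everything hinges: the $I$-reflexivity hypothesis of Corollary~\ref{61}. Because you build your $I$-free approximation directly from the sequence~\eqref{126} along $X$, the module whose $I$-reflexivity you must supply is $M=\Omega^q(\log X/C)(-D)$ itself, and your proposed verification ``via Proposition~\ref{58}, exploiting $\OO_Y$-reflexivity of $\Theta_Y^q$ and $\Der^q(-\log X)$'' fails for two reasons. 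First, Proposition~\ref{58} is an equivalence: $M$ is $I$-reflexive if and only if $\nu^{\ol\vee}\circ\delta_{W,\ol\omega_R}$ identifies $W=V^{\ol\vee}$---and after your identifications that right-hand side \emph{is} the duality $\omega_X^{q-k}=\Hom_{\OO_C}(\J_X^{n-q},\omega_C)=\Hom_{\OO_X}(\J_X^{n-q},\omega_X)$ asserted by the theorem, so invoking Proposition~\ref{58} here is circular. Second, $\OO_Y$-reflexivity (bidual with respect to $-^*$) of $\Der^q(-\log X)$ is not the relevant condition: by Lemmas~\ref{86} and \ref{90} one knows $M^I=\Der^q(-\log X)$, hence $M^{II}=\Der^q(-\log X)^\Sigma(-D)\supseteq M$, and what is needed is the reverse inclusion $\Der^q(-\log X)^\Sigma\subseteq\Omega^q(\log X/C)$, i.e.\ the analogue of Lemma~\ref{98} with $C$ replaced by $X$. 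The proof of Lemma~\ref{98} constructs explicit logarithmic $q$-vector fields from the partials of the regular sequence $h_1,\dots,h_k$; for a general Cohen--Macaulay $X$ the ideal $\I_X$ is not a complete intersection and no such computation is available (note that Lemma~\ref{90} only records the easy direction of the duality).

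This is exactly why the paper does not take your shortcut. It applies the commutative algebra to the approximation along $C$, namely $M=\Omega^q(\log C)(-D)$ coming from sequence~\eqref{123} (data~\eqref{113}), whose $I$-reflexivity \emph{is} accessible by explicit computation (Lemmas~\ref{55} and \ref{98}, Proposition~\ref{77}), and then realizes your sequence along $X$ as the $J$-restriction of that one: Lemma~\ref{76} gives $M_J=\Omega^q(\log X/C)(-D)$ and Lemma~\ref{92} gives $W_T=\omega_X^{q-k}$ up to twist (this is \eqref{114}), while Proposition~\ref{72}---the technical heart of \S\ref{65}, resting on condition~\eqref{67} and Lemma~\ref{91}---shows that $I$-reflexivity passes from $M$ to $M_J$. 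So to repair your argument, replace your first step by the approximation along $C$ and cite Lemma~\ref{76} and Proposition~\ref{72}; the remainder of your outline (identification of the duals via contraction with $\alpha_X$, non-injectivity of $\OO_X\otimes\alpha^X$, $\pdim=\Gdim$ over $\OO_Y$) then goes through as in the paper. Incidentally, your claim that $W\ne0$ ``because $k\le q<n$'' also needs justification; the paper proves $\omega_X^{q-k}\ne0$ by checking at smooth points of $X$ (Lemma~\ref{124}).
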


%%%%%%%%%%%%%%%%%%%%%%%%%%%%%%%%%%%%%%%%%%%%%%%%%%%%%%%%%%%%%%%%%%%%%%%%%%%%%%%

The analogy with the hypersurface case (see \cite[(1.8)]{Sai80}) now raises the question whether $\J_X^{n-q}$ being maximal Cohen--Macaulay for $q=k$ implies the same for all $q>k$.
An explicit description of the Jacobian modules is given in Remark~\ref{106}.

%%%%%%%%%%%%%%%%%%%%%%%%%%%%%%%%%%%%%%%%%%%%%%%%%%%%%%%%%%%%%%%%%%%%%%%%%%%%%%%

\subsection*{Acknowledgments}
We thank Delphine Pol and the anonymous referee for helpful comments.

%%%%%%%%%%%%%%%%%%%%%%%%%%%%%%%%%%%%%%%%%%%%%%%%%%%%%%%%%%%%%%%%%%%%%%%%%%%%%%%
%%%%%%%%%%%%%%%%%%%%%%%%%%%%%%%%%%%%%%%%%%%%%%%%%%%%%%%%%%%%%%%%%%%%%%%%%%%%%%%
\section{Residual duality over Gorenstein rings}\label{11}
%%%%%%%%%%%%%%%%%%%%%%%%%%%%%%%%%%%%%%%%%%%%%%%%%%%%%%%%%%%%%%%%%%%%%%%%%%%%%%%
%%%%%%%%%%%%%%%%%%%%%%%%%%%%%%%%%%%%%%%%%%%%%%%%%%%%%%%%%%%%%%%%%%%%%%%%%%%%%%%

For this section we fix a Cohen--Macaulay local ring $R$ with $n:=\dim(R)$ and an ideal $I\unlhd R$ with $k:=\height(I)\ge2$ defining a Cohen--Macaulay factor ring $\ol R:=R/I$.
These fit into a short exact sequence 
\begin{equation}\label{4}
\xymatrix{
0\ar[r] & I\ar[r] & R\ar[r]^\pi & \ol R\ar[r] & 0.
}
\end{equation}

Note that (see \cite[Thm.~2.1.2.(b), Cor.~2.1.4]{BH93})
\[
n-\dim(\ol R)=\grade(I)=\height(I)=k\ge2.
\]
In particular $I$ is a regular ideal of $R$ and hence any $\ol R$-module is $R$-torsion.

We assume further that $R$ admits a canonical module $\omega_R$.
Then also $\ol R$ admits a canonical module $\omega_{\ol R}$ (see \cite[Thm.~3.3.7]{BH93}).

%%%%%%%%%%%%%%%%%%%%%%%%%%%%%%%%%%%%%%%%%%%%%%%%%%%%%%%%%%%%%%%%%%%%%%%%%%%%%%%

\begin{ntn}\label{99}
Abbreviating $\ol\omega_R:=\ol R\otimes_R\omega_R$ we deal with the following functors
\begin{alignat*}{2}
-^*&:=\Hom_R(-,R),&\quad-^\vee&:=\Hom_R(-,\omega_R),\\
-^I&:=\Hom_R(-,I\omega_R),&-^{\ol\vee}&:=\Hom_R(-,\ol\omega_R).
\end{alignat*}
In general $\ol\omega_R\not\cong\omega_{\ol R}$ and $-^{\ol\vee}$ is not the duality of $\ol R$-modules.
For an $\ol R$-module $N$, $N^*=\Hom_{\ol R}(N,\ol R)$ but $N^\vee$ means either $\Hom_R(N,\omega_R)$ or $\Hom_{\ol R}(N,\omega_{\ol R})$, depending on the context.
For $R$-modules $M$ and $N$, we denote the canonical evaluation map by
\[
\delta_{M,N}\colon M\to\Hom_R(\Hom_R(M,N),N),\quad m\mapsto(\varphi\mapsto\varphi(m)).
\]
Whenever applicable we use an analogous notation for $\ol R$-modules.
We denote canonical isomorphisms as equalities.
\end{ntn}

%%%%%%%%%%%%%%%%%%%%%%%%%%%%%%%%%%%%%%%%%%%%%%%%%%%%%%%%%%%%%%%%%%%%%%%%%%%%%%%

\begin{lem}\label{7}
Let $N$ be an $\ol R$-module.
Then $\Ext_R^i(N,\omega_R)=0$ for $i<k$ and $N^I=0$.
\end{lem}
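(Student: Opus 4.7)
The strategy is to reduce both statements to the standard fact that an ideal of grade $k$ forces $\Ext$ vanishing in degrees below $k$ against any module annihilated by that ideal, and then to deduce $N^I=0$ from the case $i=0$ via the canonical short exact sequence for $\ol\omega_R$.

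First I would record that $\grade(I,\omega_R)=k$. Since $R$ is Cohen--Macaulay with canonical module $\omega_R$, the module $\omega_R$ is maximal Cohen--Macaulay, so $\depth_{R}(\omega_R)=n$ and any $R$-regular sequence contained in $I$ is also $\omega_R$-regular. By the Cohen--Macaulay hypothesis $\grade(I)=\height(I)=k$, hence a maximal $R$-regular sequence $x_1,\dots,x_k\in I$ is also $\omega_R$-regular.

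Next I would prove $\Ext_R^i(N,\omega_R)=0$ for $i<k$ by induction on $k$. For $k=1$, pick $x_1\in I$ regular on $\omega_R$; the short exact sequence
\[
\xymatrix{0\ar[r]&\omega_R\ar[r]^-{x_1}&\omega_R\ar[r]&\omega_R/x_1\omega_R\ar[r]&0}
\]
yields a long exact $\Ext$-sequence in which multiplication by $x_1$ on $\Ext_R^0(N,\omega_R)=\Hom_R(N,\omega_R)$ is zero (because $x_1\in I$ annihilates $N$), so $\Hom_R(N,\omega_R)$ injects into itself via $x_1$ and vanishes. For the inductive step one cuts down by $x_1,\dots,x_k$ successively and uses that $\omega_R/(x_1,\dots,x_j)\omega_R$ is a canonical module over the Cohen--Macaulay ring $R/(x_1,\dots,x_j)$, on which $I$ still has grade $k-j$. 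This is the only part requiring real care; the bookkeeping of the induction is the main (mild) obstacle.

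Finally, for the second statement I would apply $\Hom_R(N,-)$ to the short exact sequence
\[
\xymatrix{0\ar[r]&I\omega_R\ar[r]&\omega_R\ar[r]&\ol\omega_R\ar[r]&0}
\]
to obtain an inclusion $N^I=\Hom_R(N,I\omega_R)\hookrightarrow\Hom_R(N,\omega_R)$. The target is $\Ext_R^0(N,\omega_R)$, which vanishes by the first part since $0<k$ (using $k\ge2$, though $k\ge1$ already suffices here). Therefore $N^I=0$.
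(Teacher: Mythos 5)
Your proof is correct, but it takes a genuinely different route from the paper. The paper disposes of both claims in one line each: the vanishing $\Ext_R^i(N,\omega_R)=0$ for $i<k$ is quoted from Ischebeck's Lemma (the bound $i<\depth(\omega_R)-\dim(N)=n-(n-k)$), and $N^I=0$ follows from a torsion argument: $\omega_R$, hence $I\omega_R$, is torsion free, while $N$ is torsion because the regular ideal $I$ kills it, so any map $N\to I\omega_R$ is zero. You instead prove the $\Ext$ vanishing from scratch as grade-sensitivity of $\Ext$ (Rees's theorem): you extract an $\omega_R$-regular sequence $x_1,\dots,x_k\in I$ (valid, since $\omega_R$ is maximal Cohen--Macaulay, every $R$-regular sequence is $\omega_R$-regular), kill $\Hom_R(N,\omega_R)$ by the injective-yet-zero multiplication by $x_1$, and induct by cutting down; and you then deduce $N^I=0$ from the $i=0$ case via left exactness of $\Hom_R(N,-)$ applied to $I\omega_R\hookrightarrow\omega_R$. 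Both approaches are sound. Yours is more self-contained and slightly more general: the Rees-style induction needs no finiteness or dimension theory for $N$ (the usual proof of Ischebeck's Lemma filters a finitely generated $N$ by primes), and it derives the second claim from the first instead of using a separate argument. The paper's version is shorter and its torsion argument for $N^I=0$ is independent of any $\Ext$ computation. Two small remarks on your write-up: the inductive step you flag does go through, but the clean way to phrase it is the dimension-shift isomorphism $\Ext_R^i(N,\omega_R)\cong\Ext_{R/(x_1)}^{i-1}(N,\omega_R/x_1\omega_R)$ for $x_1$ regular on $\omega_R$ and annihilating $N$; and the fact that $\omega_R/(x_1,\dots,x_j)\omega_R$ is a canonical module of $R/(x_1,\dots,x_j)$ is true but not needed --- only the regular-sequence property matters.
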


\begin{proof}
The first vanishing is due to Ischebeck's Lemma (see \cite[Satz 1.9]{HK71}), the second holds because $\omega_R$ and hence $I\omega_R$ is torsion free (see \cite[Thm.~2.1.2.(c)]{BH93}) whereas $N$ is torsion.
\end{proof}

%%%%%%%%%%%%%%%%%%%%%%%%%%%%%%%%%%%%%%%%%%%%%%%%%%%%%%%%%%%%%%%%%%%%%%%%%%%%%%%
\subsection{\texorpdfstring{$I$}{I}-duality and \texorpdfstring{$I$}{I}-free approximation}
%%%%%%%%%%%%%%%%%%%%%%%%%%%%%%%%%%%%%%%%%%%%%%%%%%%%%%%%%%%%%%%%%%%%%%%%%%%%%%%

\begin{lem}\label{2}
There is a canonical identification $\omega_R=I^I$ and a canonical inclusion $I\into\omega_R^I$.
They combine to the map $\delta_{I,I\omega_R}\colon I\to I^{II}$ which is an isomorphism if $R$ is Gorenstein.
\end{lem}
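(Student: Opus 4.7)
The plan is to construct the two canonical maps via multiplication, verify that their composition is precisely $\delta_{I,I\omega_R}$, and then exploit a free generator of $\omega_R$ in the Gorenstein case. For the identification $\omega_R=I^I$, I apply $\Hom_R(-,\omega_R)$ to the short exact sequence \eqref{4}. Lemma~\ref{7} kills $\Ext_R^i(\ol R,\omega_R)$ for $i<k$, and the hypothesis $k\ge 2$ makes both the $i=0$ and $i=1$ terms vanish; hence restriction gives an isomorphism $\omega_R=\Hom_R(R,\omega_R)\xrightarrow{\sim}\Hom_R(I,\omega_R)$, concretely sending $x\in\omega_R$ to the multiplication map $\mu_x\colon i\mapsto ix$. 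Since $\mu_x(I)\subseteq I\omega_R$, this factors through the natural inclusion $I^I=\Hom_R(I,I\omega_R)\hookrightarrow\Hom_R(I,\omega_R)$; a routine surjectivity check (every $\varphi\in I^I$ extends to a map $I\to\omega_R$ and so equals some $\mu_x$) then makes $\omega_R\to I^I$ an isomorphism.

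The inclusion $I\hookrightarrow\omega_R^I=\Hom_R(\omega_R,I\omega_R)$ is defined by $i\mapsto(\xi\mapsto i\xi)$; it is well-defined because $i\xi\in I\omega_R$, and injective because $\omega_R$ is faithful (standard for canonical modules over Cohen--Macaulay local rings, which are equidimensional). To see that composing this inclusion with the isomorphism $\omega_R^I=I^{II}$ induced by the identification $\omega_R=I^I$ yields exactly $\delta_{I,I\omega_R}$, note that under $\omega_R=I^I$ an element $x\in\omega_R$ corresponds to $\mu_x\in I^I$, and then $\delta_{I,I\omega_R}(i)(\mu_x)=\mu_x(i)=ix$: this is precisely multiplication by $i$ acting on $\omega_R$.

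Finally, when $R$ is Gorenstein, any choice of isomorphism $\omega_R\cong R$ identifies $\omega_R^I=\Hom_R(\omega_R,I\omega_R)\cong\Hom_R(R,I)=I$, under which the multiplication-inclusion $I\hookrightarrow\omega_R^I$ becomes the identity $I\to I$; hence $\delta_{I,I\omega_R}$ is an isomorphism. The only delicate point in the whole argument is the bookkeeping of canonical identifications in the composition step; the real substance is packaged into Lemma~\ref{7}, which supplies the crucial Ext-vanishing, and the Gorenstein hypothesis is used only at the very end.
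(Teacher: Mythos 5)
Your proof is correct and follows essentially the same route as the paper: the Ext-vanishing from Lemma~\ref{7} applied to the dual of \eqref{4} gives $\omega_R=I^\vee=I^I$ via multiplication maps, the inclusion $I\into\omega_R^I$ comes from multiplication (your faithfulness argument is just the restriction of the paper's isomorphism $R\cong\End_R(\omega_R)$), and the identification with $\delta_{I,I\omega_R}$ plus the Gorenstein conclusion are verified by the same computations. No gaps.
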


\begin{proof}
Applying $-^\vee$ to \eqref{4} and $\Hom_R(I,-)$ to $I\omega_R\into\omega_R$ 
yields an exact sequence with a commutative triangle
\begin{equation}\label{19}
\xymatrix{
\Ext^1_R(\ol R,\omega_R) & I^\vee\ar[l] & \omega_R\ar[dl]^-\mu\ar[l] & \ol R^\vee\ar[l] & 0\ar[l]\\
&I^I.\ar@{^(->}[u]
}
\end{equation}
The diagonal map sends $\varepsilon\in\omega_R$ to the multiplication map $\mu(\varepsilon)\colon I\to I\omega_R$, $x\mapsto x\cdot\varepsilon$.
With Lemma~\ref{7} it follows that $\omega_R=I^\vee=I^I$.

There is an isomorphism $R\cong\End_R(\omega_R)$ sending each element to the corresponding multiplication map (see \cite[Thm.~3.3.4.(d))]{BH93}).
Applying $\Hom_R(\omega_R,-)$ to $I\omega_R\into\omega_R$ yields a commutative square
\begin{equation}\label{14}
\xymatrix{
R\ar[r]_-\cong & \End_R(\omega_R)\\
I\ar@{^(->}[u]\ar[r]^{\delta'} & \omega_R^I.\ar@{^(->}[u]
}
\end{equation}
If $R$ is Gorenstein, then $\omega_R^I=\Hom_R(R,I)=I$ and $\delta'$ is an isomorphism.

Combined with the above identification $\omega_R=I^I$, $\delta'$ defines a map $\delta\colon I\to I^{II}$.
Since
\[
\delta(x)(\mu(\varepsilon))=\delta'(x)(\varepsilon)=x\cdot\varepsilon=\mu(\varepsilon)(x)=\delta_{I,I\omega_R}(x)(\mu(\varepsilon))
\]
for all $x\in I$ and $\varepsilon\in\omega_R$, in fact $\delta=\delta_{I,I\omega_R}$.
\end{proof}

%%%%%%%%%%%%%%%%%%%%%%%%%%%%%%%%%%%%%%%%%%%%%%%%%%%%%%%%%%%%%%%%%%%%%%%%%%%%%%%

\begin{dfn}
If $F$ is a free $R$-module, then we call $IF=I\otimes_RF$ an \emph{$I$-free module}.
An $R$-module $M$ is called \emph{$I$-reflexive} if $\delta_{M,I\omega_R}\colon M\to M^{II}$ is an isomorphism.
\end{dfn}

%%%%%%%%%%%%%%%%%%%%%%%%%%%%%%%%%%%%%%%%%%%%%%%%%%%%%%%%%%%%%%%%%%%%%%%%%%%%%%%

\begin{prp}\label{16}
Let $F$ be a free $R$-module $F$.
Then $F^\vee=(IF)^I$ by restriction.
The adjunction map $IF\to F^{\vee I}$ is induced by the isomorphism $\delta_{F,\omega_R}$ and identifies with $\delta_{IF,I\omega_R}$.
In case $R$ is Gorenstein, $IF$ is $I$-reflexive.
\end{prp}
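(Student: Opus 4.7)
The plan is to reduce each of the three assertions to the rank-one case, where they follow directly from Lemma~\ref{2}, using that both $-^\vee$ and $-^I$ commute with finite direct sums. I would assume $F$ is finitely generated, as is the case in our intended applications to $I$-free approximations. For the first claim, restriction $F^\vee\to(IF)^I$ is well defined because $\varphi(IF)\subseteq I\varphi(F)\subseteq I\omega_R$ for every $\varphi\colon F\to\omega_R$. Choosing a basis of $F$ reduces the claim to $F=R$, where the restriction map becomes the assignment $\omega_R=R^\vee\to I^I$, $\varepsilon\mapsto(x\mapsto x\varepsilon)$. This is precisely the map $\mu$ realizing the canonical identification $\omega_R=I^I$ provided by Lemma~\ref{2}.

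For the second claim, I would verify on elements that $\delta_{F,\omega_R}\colon F\to F^{\vee\vee}$ restricts to a map $IF\to F^{\vee I}$: for $x\in IF$ and $\varphi\in F^\vee$ one has $\delta_{F,\omega_R}(x)(\varphi)=\varphi(x)\in I\omega_R$. Under the identification $F^\vee=(IF)^I$ of the first claim, $\varphi$ corresponds to $\varphi|_{IF}$, so the computation $\varphi(x)=\varphi|_{IF}(x)=\delta_{IF,I\omega_R}(x)(\varphi|_{IF})$ makes the agreement with $\delta_{IF,I\omega_R}$ immediate.

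For $I$-reflexivity of $IF$ in the Gorenstein case, the second claim reduces the problem to showing that $\delta_{IF,I\omega_R}$ is an isomorphism. Applying the first claim once more yields $(IF)^{II}=F^{\vee I}$, and a further direct sum decomposition reduces the question to $F=R$. There the map in question is exactly $\delta_{I,I\omega_R}\colon I\to I^{II}$, which is an isomorphism by the Gorenstein case of Lemma~\ref{2}. The main obstacle is purely bookkeeping: tracking how the two dualities $-^\vee$ and $-^I$ interchange through the identifications $\omega_R=I^I$ and, in the Gorenstein setting, $\omega_R^I=I$. The substantive content is entirely carried by Lemma~\ref{2}; beyond that, the proof is a pointwise check combined with compatibility of Hom with finite direct sums.
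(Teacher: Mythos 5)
Your proposal is correct and follows essentially the same route as the paper: all three claims are reduced to Lemma~\ref{2} — the first via the identification $\omega_R=I^I$, the second by identifying the adjunction map $IF\to F^{\vee I}$ with $\delta_{IF,I\omega_R}$ on elements, and the third from the Gorenstein case of Lemma~\ref{2}. The only cosmetic difference is that you decompose $F$ by a choice of basis where the paper instead applies $\Hom_R(F,-)$ and $F\otimes_R-$ functorially to the diagrams of Lemma~\ref{2} and invokes Hom-tensor adjunction; for free modules of finite rank these devices are interchangeable, and the finite generation you assume explicitly is likewise implicit in the paper's adjunction identities and in all of its applications.
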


\begin{proof}
Applying $\Hom_R(F,-)$ to $\mu$ in \eqref{19} yields $F^\vee=(IF)^I$ by Hom-tensor adjunction. 

Applying $F\otimes_R-$ to \eqref{14} yields a commutative square
\[
\xymatrix{
F\ar[r]^-{\delta_{F,\omega_R}}_-\cong & F^{\vee\vee}\\
IF\ar@{^(->}[u]\ar[r] & F^{\vee I}\ar@{^(->}[u]
}
\]
where the bottom row is adjunction.
In fact, using Lemma~\ref{2},
\begin{align*}
IF=I\otimes_RF\to F\otimes_R\omega_R^I&=F\otimes_R\Hom_R(\omega_R,I\omega_R)\\
&=\Hom_R(F\otimes_R\omega_R,I\omega_R)\\
&=\Hom_R(F\otimes_R\Hom_R(R,\omega_R),I\omega_R)\\
&=\Hom_R(\Hom_R(F\otimes_RR,\omega_R),I\omega_R)\\
&=\Hom_R(\Hom_R(F,\omega_R),I\omega_R)=F^{\vee I},\\
x\cdot e &\mapsto (\psi\mapsto x\cdot\psi(e)).
\end{align*}
Identifying $F^\vee=(IF)^I$ using Lemma~\ref{2} yields with the map $\mu$ in diagram~\eqref{19}
\[
\varepsilon=\psi(e)\leftrightarrow\mu(\varepsilon)\implies 
x\cdot\psi(e)=x\cdot\varepsilon=\mu(\varepsilon)(x).
\]
Adjunction thus becomes identified with $\delta_{IF,I\omega_R}$.
The last claim is due to Lemma~\ref{2}.
\end{proof}

%%%%%%%%%%%%%%%%%%%%%%%%%%%%%%%%%%%%%%%%%%%%%%%%%%%%%%%%%%%%%%%%%%%%%%%%%%%%%%%

\begin{dfn}\label{8}
Let $M$ be a finitely generated $R$-module.
We call a short exact sequence
\begin{equation}\label{5}
\xymatrix{
0\ar[r] & IF\ar[r]^-\iota & M\ar[r]^-\rho & W\ar[r] & 0
}
\end{equation}
where $F$ is free and $IW=0$ an \emph{$I$-free approximation of $M$} with \emph{support} $\Supp(W)$.
We consider $W$ as an $\ol R$-module.
The inclusion map $\iota\colon IF\into F=M$ defines the \emph{trivial $I$-free approximation}
\[
\xymatrix{
0\ar[r] & IF\ar[r] & F\ar[r] & F/IF\ar[r] & 0.
}
\]
A \emph{morphism of $I$-free approximations} is a morphism of short exact sequences. 
\end{dfn}

%%%%%%%%%%%%%%%%%%%%%%%%%%%%%%%%%%%%%%%%%%%%%%%%%%%%%%%%%%%%%%%%%%%%%%%%%%%%%%%

\begin{lem}\label{25}
For any $I$-free approximation \eqref{5}, $\iota$ fits into a unique commutative triangle
\begin{equation}\label{6}
\xymatrix{
& F \\
IF\ar@{^(->}[ur]\ar@{^(->}[r]^-\iota & M.\ar[u]_-\kappa
}	
\end{equation}
If $\iota^{-1}$ denotes the choice of any preimage under $\iota$, then $\kappa(m)=\iota^{-1}(xm)/x$ for any $x\in I\cap R^{\reg}$.
If $M$ is maximal Cohen--Macaulay, then $\kappa$ is surjective.
In particular, \eqref{5} becomes trivial if in addition $\kappa$ injective.
\end{lem}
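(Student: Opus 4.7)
The plan is to construct $\kappa$ via a $\Hom/\Ext$ argument, then to control its kernel and cokernel by exploiting that both are $\ol R$-modules while $M$ is maximal Cohen--Macaulay and $I$ has height $k\ge 2$.

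Applying $\Hom_R(-,F)$ to \eqref{5}, uniqueness of $\kappa$ reduces to the vanishing $\Hom_R(W,F)=0$: any two lifts of the inclusion $IF\into F$ through $\iota$ differ by a morphism $M\to F$ factoring through $W$, but $W$ is killed by some regular $x\in I$ while the free module $F$ is $x$-torsion free. Existence comes from $\Ext^1_R(W,F)=0$, which follows from Ischebeck's lemma (the argument behind Lemma~\ref{7} goes through with target $F$ in place of $\omega_R$) using the standing hypothesis $k\ge 2$. The formula is then immediate: for $x\in I\cap R^{\reg}$ and $m\in M$, the $I$-torsion of $W$ gives $xm\in\iota(IF)$, so $\iota^{-1}(xm)\in IF\subseteq F$ is defined; $R$-linearity of $\kappa$ combined with $\kappa\circ\iota$ being the inclusion forces $x\kappa(m)=\iota^{-1}(xm)$, and the regularity of $x$ on $F$ pins $\kappa(m)$ down uniquely.

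For the surjectivity of $\kappa$ when $M$ is MCM, first show $\kappa$ is injective. Set $K:=\ker\kappa$. Since $\kappa\circ\iota$ is an injection, $K\cap\iota(IF)=0$, so $K$ embeds via $\rho$ into $W$; in particular $IK=0$ and every $\pp\in\Ass(K)$ contains $I$, forcing $\height(\pp)\ge k\ge 2$. On the other hand, $\Ass(K)\subseteq\Ass(M)$, and for MCM $M$ over the Cohen--Macaulay local ring $R$ every associated prime of $M$ has height $0$: CM modules have no embedded primes, and the dimension formula for a CM local ring identifies primes $\pp$ with $\dim(R/\pp)=\dim M=\dim R$ as the minimal primes of $R$. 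These height constraints are incompatible unless $K=0$, so $\kappa$ is injective. Now $C:=\coker\kappa$ is a quotient of $F/IF$, hence an $\ol R$-module with $\depth_R C\le\dim\ol R=n-k\le n-2$; the depth lemma applied to $0\to M\into F\to C\to 0$ yields $\depth_R C\ge\min(\depth F,\depth M-1)=n-1$. The resulting arithmetic contradiction forces $C=0$, so $\kappa$ is surjective. The final assertion follows formally: a $\kappa$ that is surjective (by the above) and additionally injective is an isomorphism, identifying \eqref{5} with $0\to IF\to F\to F/IF\to 0$.

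The core of the argument is that the $\ol R$-module structure on $\ker\kappa$ and $\coker\kappa$ collides with the $R$-MCM structure of $M$: for injectivity via the clash of height $\ge k$ (forced by the $I$-torsion) with height $0$ (forced by MCM), and for surjectivity via the clash of $\depth\le n-k$ with $\depth\ge n-1$ from the depth lemma. Both collisions hinge on the standing hypothesis $k\ge 2$, without which either conclusion would fail.
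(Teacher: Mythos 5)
Your proof is correct, and its skeleton coincides with the paper's: existence of $\kappa$ from $\Ext^1_R(W,F)=0$ via Ischebeck's Lemma, uniqueness from $\Hom_R(W,F)=0$, the formula by clearing the regular element $x$, and surjectivity by playing the bound $\dim(\coker(\kappa))\le n-k\le n-2$ (as $\coker(\kappa)$ is a quotient of $F/IF$) against $\depth(\coker(\kappa))\ge n-1$ from the Depth Lemma. Where you genuinely depart from the paper is the intermediate step: you prove that $\kappa$ is \emph{automatically} injective when $M$ is maximal Cohen--Macaulay, by confronting $\Ass(\ker\kappa)\subseteq\Ass(M)$ (height-$0$ primes, since associated primes $\pp$ of a maximal Cohen--Macaulay module satisfy $\dim(R/\pp)\ge\depth(M)=n$) with $I\ker(\kappa)=0$ (primes of height $\ge k$). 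The paper has no such step and instead keeps injectivity as an extra hypothesis in the final assertion. Your addition buys two things. First, it strengthens the lemma: the clause ``if in addition $\kappa$ injective'' becomes superfluous, since maximal Cohen--Macaulayness of $M$ already forces \eqref{5} to be trivial. Second, it makes the Depth Lemma step airtight: to get $\depth(\coker(\kappa))\ge n-1$ one applies the Depth Lemma to the short exact sequence $0\to\kappa(M)\to F\to\coker(\kappa)\to 0$ and needs $\depth(\kappa(M))\ge n$, which is not automatic for a quotient of $M$ — it is exactly injectivity of $\kappa$ (so that $\kappa(M)\cong M$) that supplies it, a point the paper passes over silently. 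One quibble: your closing remark that both ``collisions'' hinge on $k\ge2$ overstates the case for injectivity, where $\height(I)\ge1$ (i.e.\ $I$ regular) already suffices; only the dimension-versus-depth clash for surjectivity truly needs $k\ge2$.
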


\begin{proof}
Applying $\Hom_R(-,F)$ to \eqref{5} yields
\[
\xymatrix{
\Ext^1_R(W,F) & \ar[l]\Hom_R(IF,F) & \ar[l]_-{\iota^*}\Hom_R(M,F) & \ar[l]\Hom_R(W,F) & \ar[l]0.
}
\]
By Ischebeck's Lemma (see \cite[Satz~1.9]{HK71}), $\Ext^1_R(W,F)=0=\Hom_R(W,F)$ making $\iota^*$ an isomorphism.
Then $\kappa$ is the preimage of the canonical inclusion $IF\into F$ under $\iota^*$.
The formula for $\kappa$ follows immediately.

Since $\coker(\kappa)$ is a homomorphic image of $F/IF$, $\dim(\coker(\kappa))\le n-k\le n-2$.
If $M$ is maximal Cohen--Macaulay, then $\depth(\coker(\kappa))\ge n-1$ by the Depth Lemma (see \cite[Prop.~1.2.9]{BH93}).
This forces $\coker(\kappa)=0$ (see \cite[Prop.~1.2.13]{BH93}) and makes $\kappa$ surjective.
\end{proof}

%%%%%%%%%%%%%%%%%%%%%%%%%%%%%%%%%%%%%%%%%%%%%%%%%%%%%%%%%%%%%%%%%%%%%%%%%%%%%%%

By functoriality of the cokernel, any $\varphi\in F^\vee$ gives rise to a commutative diagram
\begin{equation}\label{17}
\xymatrix{
0\ar[r] & I\omega_R\ar[r] & \omega_R\ar[r]^-{\pi_\omega} & \ol\omega_R\ar[r] & 0\\
& & F\ar[u]_-\varphi\\
0\ar[r] & IF\ar[uu]_-{\varphi\vert_{IF}}\ar@{^(->}[ur]\ar[r]^-\iota & M\ar[u]_-\kappa\ar[r]^-\rho & W \ar@{-->}[uu]_-{\ol\varphi}\ar[r] & 0
}	
\end{equation}
with top exact row induced by \eqref{4} and bottom row \eqref{5}.
This defines a map
\begin{equation}\label{26}
\xymatrix@R=0em{
W^{\ol\vee} & F^\vee\ar[l],\\
\ol\varphi & \varphi.\ar@{|->}[l]
}
\end{equation}

%%%%%%%%%%%%%%%%%%%%%%%%%%%%%%%%%%%%%%%%%%%%%%%%%%%%%%%%%%%%%%%%%%%%%%%%%%%%%%%

Applying $\Hom_R(F,-)$ to the upper row of \eqref{17} yields a short exact sequence
\begin{equation}\label{3}
\xymatrix{
0\ar[r] & F^I\ar[r] & F^\vee\ar[r] & F^{\ol\vee}\ar[r] & 0.
}
\end{equation}
By Lemma~\ref{7} applying $-^I$ to \eqref{5} and \eqref{6} yields the exact diagonal sequence and the triangle of inclusions with vertex $F^I$ in the following commutative diagram.
\begin{equation}\label{24}
\xymatrix{
0 & V\ar[l]\ar@{=}[d] & F^{\ol\vee}\ar[l]_-{\ol\alpha} & M^I/F^I\ar[l]_-{\ol\lambda} & 0\ar[l]\ar[dl]\\
0 & V\ar[l]\ar@{^(->}[dd] & F^\vee\ar[l]_-{\alpha}\ar@{->>}[u] & M^I\ar[l]_-{\lambda}\ar_-{\iota^I}[dl]\ar@{->>}[u] & 0\ar[l]\\
& & (IF)^I\ar@{=}[u]\ar[dl] & F^I\ar@{_(->}[l]\ar@{^(->}[u]_-{\kappa^I}\\
& \Ext^1_R(W,I\omega_R)
}
\end{equation}
By Proposition~\ref{16}, the identification $F^\vee=(IF)^I$ in diagram~\eqref{24} is given by
\[
\varphi\leftrightarrow\varphi\vert_{IF}=\varphi\circ\kappa\circ\iota
\]
in diagram~\eqref{17}.
It defines the map $\lambda$ with cokernel $\alpha$.
For $\psi\in M^I$, $\lambda(\psi)$ is defined by
\[
\lambda(\psi)\vert_{IF}=\psi\circ\iota.
\]
With $\Ext^1_R(W,I\omega_R)$ also $V$ is an $\ol R$-module.
Using \eqref{3} the Snake Lemma yields the short exact upper row of \eqref{24}. 
By Lemma~\ref{7} the commutative square $\Hom_R(IF\into M,I\omega_R\into\omega_R)$ reads
\[
\xymatrix{
(IF)^I\ar@{^(->}[d] & M^I\ar[l]_-{\iota^I}\ar@{^(->}[d]\\
(IF)^\vee & M^\vee.\ar[l]_-{\iota^\vee}^-\cong
}
\]
This allows one to check equalities of maps $M\to\omega_R$ after precomposing with $\iota$.
It follows that
\begin{equation}\label{105}
\varphi\circ\kappa\in M^I\iff\varphi\in\lambda(M^I)\implies\varphi=\lambda(\varphi\circ\kappa)
\end{equation}
for any $\varphi\in F^\vee$.

%%%%%%%%%%%%%%%%%%%%%%%%%%%%%%%%%%%%%%%%%%%%%%%%%%%%%%%%%%%%%%%%%%%%%%%%%%%%%%%

\begin{dfn}\label{9}
We call the middle row 
\begin{equation}\label{37}
\xymatrix{
0 & V\ar[l] & F^\vee\ar[l]_-{\alpha} & M^I\ar[l]_-{\lambda} & 0\ar[l]
}
\end{equation}
of diagram~\eqref{24} the \emph{$I$-dual} of the $I$-free approximation \eqref{5}.
We set
\begin{equation}\label{44}
W':=\Ext_R^1(V,I\omega_R).
\end{equation}
\end{dfn}

%%%%%%%%%%%%%%%%%%%%%%%%%%%%%%%%%%%%%%%%%%%%%%%%%%%%%%%%%%%%%%%%%%%%%%%%%%%%%%%

\begin{lem}\label{36}
For any $I$-free approximation \eqref{5} the map \eqref{26} factors through the map $\alpha$ in \eqref{24} defining an inclusion $\nu\colon V\to W^{\ol\vee}$, that is, 
\[
\xymatrix@R=0em{
W^{\ol\vee} & V\ar@{_{(}->}[l]_-{\nu} & F^\vee\ar@{->>}[l]_-{\alpha},\\
\ol\varphi && \varphi.\ar@{|->}[ll]
}
\]
\end{lem}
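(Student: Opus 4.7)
The plan is to exhibit $\nu$ by identifying the kernel of the map $\varphi\mapsto\ol\varphi$ in \eqref{26} with $\ker(\alpha)$; this will simultaneously give the factorization through the surjection $\alpha$ and the injectivity of the induced map $\nu\colon V\into W^{\ol\vee}$.

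First I would make the construction of $\ol\varphi$ in \eqref{17} completely explicit. Given $\varphi\in F^\vee$, the triangle \eqref{6} yields $\varphi\circ\kappa\circ\iota=\varphi|_{IF}$ with image in $I\omega_R$. Hence the composite $\pi_\omega\circ\varphi\circ\kappa\colon M\to\ol\omega_R$ vanishes on $\iota(IF)=\ker(\rho)$, and therefore descends along the surjection $\rho$ to a unique map $\ol\varphi\colon W\to\ol\omega_R$ characterized by $\ol\varphi\circ\rho=\pi_\omega\circ\varphi\circ\kappa$. This is exactly the $\ol\varphi$ produced by \eqref{17}.

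Next I would characterize when $\ol\varphi=0$. Since $\rho$ is surjective, $\ol\varphi=0$ is equivalent to $\varphi\circ\kappa$ taking values in $\ker(\pi_\omega)=I\omega_R$, i.e., to $\varphi\circ\kappa\in M^I$. By the equivalence \eqref{105} recorded earlier, this is in turn equivalent to $\varphi\in\lambda(M^I)$, and by exactness of the $I$-dual sequence \eqref{37} the image $\lambda(M^I)$ equals $\ker(\alpha)$.

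It follows that the map \eqref{26} factors uniquely through the surjection $\alpha$ as an injection $\nu\colon V\into W^{\ol\vee}$ with $\nu(\alpha(\varphi))=\ol\varphi$, which is the content of the lemma. The only ingredient beyond unwinding \eqref{17} and \eqref{24} is the equivalence \eqref{105}; since that is already in hand, there is no real obstacle and the argument is essentially a clean diagram chase.
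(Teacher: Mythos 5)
Your proposal is correct and follows essentially the same route as the paper's own proof: both reduce the lemma to the chain of equivalences $\ol\varphi=0\iff\ol\varphi\circ\rho=0\iff\varphi\circ\kappa\in M^I\iff\varphi\in\lambda(M^I)=\ker(\alpha)$, using diagram~\eqref{17}, equivalence~\eqref{105}, and exactness of the $I$-dual sequence in \eqref{24}. Your write-up merely makes explicit the descent construction of $\ol\varphi$ and the universal-property step producing the injection $\nu$, which the paper leaves implicit.
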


\begin{proof}
By diagrams~\eqref{17} and \eqref{24}, equivalence~\eqref{105} and exactness properties of $\Hom$,
\[
\ol\varphi=0\iff\ol\varphi\circ\rho=0\iff\varphi\circ\kappa\in M^I\iff\varphi\in\lambda(M^I)\iff\alpha(\varphi)=0.\qedhere
\]
\end{proof}

%%%%%%%%%%%%%%%%%%%%%%%%%%%%%%%%%%%%%%%%%%%%%%%%%%%%%%%%%%%%%%%%%%%%%%%%%%%%%%%

\begin{rmk}
By Lemma~\ref{7} applying $\Hom_R(W,-)$ to the upper row of diagram~\eqref{17} yields
\[
W^{\ol\vee}=\coker\Hom_R(W,\pi_\omega)\cong\Ext_R^1(W,I\omega_R).
\]
The inclusion of $V$ in the latter in diagram~\eqref{24} uses $\coker \iota^I\into\Ext_R^1(W,I\omega_R)$.
The relation with the inclusion $\nu$ in Lemma~\ref{36} is clarified by the double complex obtained by applying $\Hom_R(-,-)$ to $\eqref{5}$ and the upper row of \eqref{17}.
By Lemma~\ref{7} it expands to a commutative diagram with exact rows and columns
\[
\xymatrix{
& 0\ar[d] & 0\ar[d]\\
\Ext_R^1(W,I\omega_R)\ar[d] & (IF)^I\ar[l]\ar[d] & M^I\ar[l]_-{\iota^I}\ar[d] & 0\ar[l]\ar[d]\\
0 & (IF)^\vee\ar[l]\ar[d] & M^\vee\ar[l]_-{\iota^\vee}\ar[d] & 0\ar[l]\ar[d]^-{\Hom_R(W,\pi_\omega)}\\
& (IF)^{\ol\vee} & M^{\ol\vee}\ar[l] & W^{\ol\vee}\ar[l]\ar[d]_-\cong & 0\ar[l]\\
&&&\Ext_R^1(W,I\omega_R).
}
\]
An element $\alpha(\varphi)\in V$ with $\varphi\in F^\vee$ maps to $\varphi\vert_{IF}\in(IF)^I$, to $\varphi\circ\kappa\in M^\vee$ and to $\ol\varphi\in W^{\ol\vee}$.
\end{rmk}

%%%%%%%%%%%%%%%%%%%%%%%%%%%%%%%%%%%%%%%%%%%%%%%%%%%%%%%%%%%%%%%%%%%%%%%%%%%%%%%
\subsection{\texorpdfstring{$I$}{I}-reflexivity over Gorenstein rings}
%%%%%%%%%%%%%%%%%%%%%%%%%%%%%%%%%%%%%%%%%%%%%%%%%%%%%%%%%%%%%%%%%%%%%%%%%%%%%%%

In this subsection we assume that $R$ is Gorenstein and study $I$-reflexivity of modules $M$ in terms of an $I$-free approximation~\eqref{5}.
With the Gorenstein hypothesis $F^\vee$ is free and hence
\begin{equation}\label{47}
\Ext^1_R(F^\vee,-)=0.
\end{equation}

%%%%%%%%%%%%%%%%%%%%%%%%%%%%%%%%%%%%%%%%%%%%%%%%%%%%%%%%%%%%%%%%%%%%%%%%%%%%%%%

\begin{prp}\label{33}
Assume that $R$ is Gorenstein.
For any $I$-free approximation \eqref{5} and $W'$ as in \eqref{44} there is a commutative square
\[
\xymatrix{
M\ar[d]^-{\delta_{M,I\omega_R}}\ar@{->>}[r]^-\rho & W\ar[d]^-{\ol\delta}\\
M^{II}\ar@{->>}[r]^-{\rho'} & W'
}
\]
and $\ol\delta$ is an isomorphism if and only if $M$ is $I$-reflexive.
\end{prp}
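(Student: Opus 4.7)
The plan is to apply $-^I$ once more to the $I$-dual sequence~\eqref{37} and compare the resulting short exact sequence with the original $I$-free approximation~\eqref{5} via the biduality map $\delta_{M,I\omega_R}$. Applying $\Hom_R(-,I\omega_R)$ to $0\to M^I\to F^\vee\to V\to 0$ produces a long exact sequence in which $V^I=0$ by Lemma~\ref{7} (as $V$ is an $\ol R$-module) and $\Ext^1_R(F^\vee,I\omega_R)=0$ by~\eqref{47} (since $F^\vee$ is free). Combined with the identification $F^{\vee I}=(IF)^{II}=IF$ coming from Proposition~\ref{16}, this truncates to a short exact sequence
\[
0\to IF\to M^{II}\to W'\to 0
\]
whose right-hand surjection I would take as the definition of $\rho'$. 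Since $\lambda=\iota^I$ under the identification $F^\vee=(IF)^I$ recorded in diagram~\eqref{24}, the left inclusion is $\iota^{II}=(\iota^I)^I$.

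Next I would invoke naturality of the evaluation map $\delta_{-,I\omega_R}$ applied to $\iota\colon IF\into M$, which yields $\delta_{M,I\omega_R}\circ\iota=\iota^{II}\circ\delta_{IF,I\omega_R}$. This fits the two short exact sequences into a morphism
\[
\xymatrix{
0\ar[r] & IF\ar[r]^-\iota\ar[d]_-{\delta_{IF,I\omega_R}}^-\cong & M\ar[r]^-\rho\ar[d]^-{\delta_{M,I\omega_R}} & W\ar[r]\ar@{-->}[d]^-{\ol\delta} & 0\\
0\ar[r] & (IF)^{II}\ar[r]^-{\iota^{II}} & M^{II}\ar[r]^-{\rho'} & W'\ar[r] & 0
}
\]
in which the unique induced arrow $\ol\delta$ on cokernels is precisely the dashed map appearing in the square of the proposition, proving commutativity of that square.

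Finally, Proposition~\ref{16} gives that $\delta_{IF,I\omega_R}$ is an isomorphism, so the Snake Lemma applied to this morphism of short exact sequences yields $\ker\delta_{M,I\omega_R}\cong\ker\ol\delta$ and $\coker\delta_{M,I\omega_R}\cong\coker\ol\delta$. Hence $M$ is $I$-reflexive if and only if $\ol\delta$ is an isomorphism. The only real subtlety I anticipate is the bookkeeping around Proposition~\ref{16}: one must verify that, under the identification $F^{\vee I}\cong IF$ via $\delta_{IF,I\omega_R}$, the left inclusion in the bottom row genuinely becomes $\iota^{II}$ (equivalently $\lambda^I$), so that the two sequences paste together into a single morphism of short exact sequences. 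Once that is in place, both the commutativity of the square and the equivalence via the Snake Lemma are automatic.
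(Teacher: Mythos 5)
Your proposal is correct and follows essentially the same route as the paper: the paper's diagram~\eqref{34} is exactly your morphism of short exact sequences, with the bottom row obtained by applying $-^I$ to the $I$-dual sequence~\eqref{37} (using Lemma~\ref{7} and \eqref{47} for exactness), the identification $\lambda^I=\iota^{II}$ via Proposition~\ref{16}, naturality of $\delta$ for the left square, and the Snake Lemma for the equivalence. The bookkeeping subtlety you flag is precisely what the paper's diagram~\eqref{24} and Proposition~\ref{16} are set up to handle, so nothing further is missing.
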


\begin{proof}
Consider the following commutative diagram whose rows are \eqref{5} and obtained by applying $-^I$ to the triangle with vertex $F^\vee$ in diagram~\eqref{24}.
\begin{equation}\label{34}
\xymatrix{
&&F\ar@/^12.0pc/[ddd]_-\cong^-{\delta_{F,\omega_R}}\\
0\ar[r] & IF\ar@{^(->}[ur]\ar[d]^-{\delta_{IF,I\omega_R}}_-\cong\ar[r]^-\iota & M\ar[u]_-\kappa\ar[d]^-{\delta_{M,I\omega_R}}\ar[r]^-\rho & W\ar[r]\ar@{-->}[d]^-{\ol\delta} & 0\\
0\ar[r] & (IF)^{II}\ar@{=}[d]\ar[r]^-{\iota^{II}} & M^{II}\ar[r]^-{\rho'} & W'\ar[r] & 0\\
& F^{\vee I}\ar@{^(->}[r]\ar[ur]^-{\lambda^I} & F^{\vee\vee}.
}
\end{equation}
The latter is a short exact sequence by Lemma~\ref{7} and \eqref{47}.
The commutative squares in diagram~\eqref{34} are due to functoriality of $\delta$ and the cokernel.
The claimed equivalence then follows from the Snake Lemma.
Proposition~\ref{16} yields the part of diagram~\eqref{34} involving $\delta_{F,\omega_R}$.
This part is just added for clarification but not needed for the proof.
\end{proof}

%%%%%%%%%%%%%%%%%%%%%%%%%%%%%%%%%%%%%%%%%%%%%%%%%%%%%%%%%%%%%%%%%%%%%%%%%%%%%%%

\begin{lem}\label{35}
Assume that $R$ is Gorenstein and consider an $I$-free approximation \eqref{5}.
Then the maps $\nu$ from Lemma~\ref{36} and $\ol\delta$ from Proposition~\ref{33} fit into a commutative square
\[
\xymatrix{
W\ar[d]^-{\ol\delta}\ar[r]^-{\delta_{W,\ol\omega_R}} & W^{\ol\vee\ol\vee}\ar[d]^-{\nu^{\ol\vee}}\\
W' & V^{\ol\vee}\ar[l]_-\xi^-\cong.
}	
\]
\end{lem}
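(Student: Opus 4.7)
The plan is to first identify the map $\xi$ concretely as a connecting homomorphism, then chase an element $w \in W$ through both routes around the square. Everything reduces to a single computational identity $\lambda(\psi)(\kappa(m)) = \psi(m)$.

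Since $V$ is an $\ol R$-module and $k \ge 2$, Lemma~\ref{7} gives $V^I = V^\vee = \Ext^1_R(V,\omega_R) = 0$, so applying $\Hom_R(V,-)$ to $0 \to I\omega_R \to \omega_R \to \ol\omega_R \to 0$ yields a natural connecting isomorphism $\xi\colon V^{\ol\vee} \xrightarrow{\cong} \Ext^1_R(V, I\omega_R) = W'$. To compute $\xi$ concretely, I would use the presentation $0 \to M^I \xrightarrow{\lambda} F^\vee \xrightarrow{\alpha} V \to 0$ from diagram~\eqref{24}: together with $\Ext^1_R(F^\vee, I\omega_R) = 0$ from \eqref{47}, the snake lemma identifies $W'$ with $M^{II}/F^{\vee I}$, and under this identification $\xi$ sends $g\colon V \to \ol\omega_R$ to the class of $\wt g \circ \lambda$, where $\wt g\colon F^\vee \to \omega_R$ is any lift of $g \circ \alpha$ along $\pi_\omega$.

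Now I would take $w = \rho(m)$. By Lemma~\ref{36}, $g := \nu^{\ol\vee}(\delta_{W,\ol\omega_R}(w))$ is the map $\alpha(\varphi) \mapsto \ol\varphi(w)$. From diagram~\eqref{17}, $\ol\varphi(w) = \pi_\omega(\varphi(\kappa(m)))$, so $\wt g(\varphi) := \varphi(\kappa(m))$ defines a canonical lift, and $\wt g \circ \lambda$ is the map $\psi \mapsto \lambda(\psi)(\kappa(m))$. The key identity is $\lambda(\psi)(\kappa(m)) = \psi(m)$ for all $\psi \in M^I$: choosing $x \in I \cap R^{\reg}$, Lemma~\ref{25} gives $\iota(x\kappa(m)) = xm$, and combining $R$-linearity with $\lambda(\psi)|_{IF} = \psi \circ \iota$ yields
\[
x \cdot \lambda(\psi)(\kappa(m)) = \lambda(\psi)(x\kappa(m)) = \psi(\iota(x\kappa(m))) = \psi(xm) = x \cdot \psi(m);
\]
the identity then follows because $x$ is a non-zerodivisor on the maximal Cohen--Macaulay module $\omega_R$.

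Hence $\wt g \circ \lambda = \delta_{M,I\omega_R}(m)$ as elements of $M^{II}$, and its class in $W' = M^{II}/F^{\vee I}$ equals $\ol\delta(w)$ by Proposition~\ref{33}, proving $\xi \circ \nu^{\ol\vee} \circ \delta_{W,\ol\omega_R} = \ol\delta$. The main obstacle is purely bookkeeping: arranging that both $\xi$ and $\ol\delta$ are presented as equivalence classes in the same quotient $M^{II}/F^{\vee I}$ and matching the various canonical identifications supplied by Lemma~\ref{2}, Proposition~\ref{16}, and diagram~\eqref{24}. Once this is set up, the only genuine algebraic input is the torsion-freeness argument above.
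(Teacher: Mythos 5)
Your proof is correct and is essentially the paper's own argument: your concrete formula for $\xi$ (lift $g\circ\alpha$ along $\pi_\omega$ to $\wt g\in F^{\vee\vee}$ and take the class of $\wt g\circ\lambda$ in $M^{II}/F^{\vee I}$) is exactly the snake-lemma isomorphism the paper extracts from the double complex $\Hom_R(-,-)$ built from the $I$-dual sequence $0\to M^I\xrightarrow{\lambda}F^\vee\xrightarrow{\alpha}V\to0$ and $0\to I\omega_R\to\omega_R\xrightarrow{\pi_\omega}\ol\omega_R\to0$. Likewise your key identity $\lambda(\psi)(\kappa(m))=\psi(m)$, proved by multiplying with $x\in I\cap R^{\reg}$ and cancelling via torsion-freeness of $\omega_R$, is the same computation the paper performs (there in the form $\psi(\varphi)=(\varphi\circ\kappa)(m)$), merely run in the opposite direction of the diagram chase.
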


\begin{proof}
Consider the double complex obtained by applying $\Hom_R(-,-)$ to the middle and top rows of diagrams~\eqref{24} and \eqref{17}.
By Lemma~\ref{7} and \eqref{47} it expands to a commutative diagram with exact rows and columns 
\[
\xymatrix{
&& 0\ar[d] & 0\ar[d]\\
& 0\ar[r]\ar[d] & F^{\vee I}\ar[r]^-{\lambda^I}\ar[d] & M^{II}\ar[r]^-{\rho'}\ar[d] & W'\ar[r]\ar[d] & 0\\
& 0\ar[r]\ar[d] & F^{\vee\vee}\ar[r]^-{\lambda^\vee}\ar[d]^-{(\pi_\omega)_*} & M^{I\vee}\ar[r]\ar[d] & 0\\
0 \ar[r] & V^{\ol\vee}\ar[r]^-{\alpha^{\ol\vee}} & F^{\vee\ol\vee}\ar[r]^-{\lambda^{\ol\vee}}\ar[d] & M^{I\ol\vee}\\
&& 0.
}
\]
The Snake Lemma yields an isomorphism $\xi\colon V^{\ol\vee}\to W'$.
Attaching the square of Proposition~\ref{33}, the relation $\ol\delta(w)=\xi(\wt\psi)$ is given by the diagram chase
\[
\xymatrix{
&& m\ar@{|->}[d]\ar@{|->}[r] & w\ar@{|->}[d]\\
&& \delta_{M,I\omega_R}(m)\ar@{|->}[d]\ar@{|->}[r] &  \ol\delta(w)\\
& \psi\ar@{|->}[d]\ar@{|->}[r] & \psi\circ\lambda=\delta_{M,I\omega_R}(m)\\
\wt\psi\ar@{|->}[r] & \wt\psi\circ\alpha=\pi_\omega\circ\psi.
}
\]
Using implication~\eqref{105}, diagram~\eqref{17} and Lemma~\ref{36}, one deduces that, with $x\in I\cap R^\reg$ and $v=\alpha(\varphi)$,
\begin{align*}
x\varphi\circ\kappa\in M^I
\implies x\varphi
&=\lambda(x\varphi\circ\kappa)\\
\implies x\psi(\varphi)
&=\psi(x\varphi)
=(\psi\circ\lambda)(x\varphi\circ\kappa)
=\delta_{M,I\omega_R}(m)(x\varphi\circ\kappa)
=x(\varphi\circ\kappa)(m)\\
\implies\psi(\varphi)
&=(\varphi\circ\kappa)(m)\\
\implies\wt\psi(v)
&=(\wt\psi\circ\alpha)(\varphi)
=(\pi_\omega\circ\psi)(\varphi)
=(\pi_\omega\circ\varphi\circ\kappa)(m)
=\ol\varphi(w)\\
&=(\nu\circ\alpha)(\varphi)(w)
=\nu(\alpha(\varphi))(w)
=\nu(v)(w)\\
&=\delta_{W,\ol\omega_R}(w)(\nu(v))
=\nu^{\ol\vee}(\delta_{W,\ol\omega_R}(w))(v)
=(\nu^{\ol\vee}\circ\delta_{W,\ol\omega_R})(w)(v)\\
\implies\wt\psi
&=(\nu^{\ol\vee}\circ\delta_{W,\ol\omega_R})(w)\\
\implies\ol\delta(w)
&=\xi(\wt\psi)
=(\xi\circ\nu^{\ol\vee}\circ\delta_{W,\ol\omega_R})(w)\\
\implies\ol\delta
&=\xi\circ\nu^{\ol\vee}\circ\delta_{W,\ol\omega_R}.\qedhere
\end{align*}
\end{proof}

%%%%%%%%%%%%%%%%%%%%%%%%%%%%%%%%%%%%%%%%%%%%%%%%%%%%%%%%%%%%%%%%%%%%%%%%%%%%%%%

\begin{prp}\label{58}
Assume that $R$ is Gorenstein and consider an $I$-free approximation \eqref{5}.
Then $M$ is $I$-reflexive if and only if the map $\nu^{\ol\vee}\circ\delta_{W,\ol\omega_R}$ with $\nu$ from Lemma~\ref{36} identifies $W=V^{\ol\vee}$.
\end{prp}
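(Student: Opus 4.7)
The plan is to observe that this proposition is essentially a synthesis of Proposition~\ref{33} and Lemma~\ref{35}, so most of the work has already been carried out. First I would recall that by Lemma~\ref{35} the map $\ol\delta\colon W\to W'$ from Proposition~\ref{33} factors as
\[
\ol\delta=\xi\circ\nu^{\ol\vee}\circ\delta_{W,\ol\omega_R},
\]
where $\xi\colon V^{\ol\vee}\to W'$ is an isomorphism (produced by the Snake Lemma applied to the double complex in the proof of Lemma~\ref{35}, using the Gorenstein vanishing \eqref{47} together with Lemma~\ref{7}). Consequently $\ol\delta$ is an isomorphism if and only if $\nu^{\ol\vee}\circ\delta_{W,\ol\omega_R}\colon W\to V^{\ol\vee}$ is an isomorphism.

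Next I would invoke Proposition~\ref{33}, which, under the Gorenstein hypothesis, characterizes $I$-reflexivity of $M$ exactly by the condition that $\ol\delta$ is an isomorphism. Combining these two equivalences yields the claim: $M$ is $I$-reflexive if and only if $\nu^{\ol\vee}\circ\delta_{W,\ol\omega_R}$ is an isomorphism, i.e.\ identifies $W$ with $V^{\ol\vee}$.

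Since the content has been packaged into the two preceding results, there is no genuine obstacle; the only thing to do carefully in the write-up is to unwind the definitions so that the reader sees that ``identifies $W=V^{\ol\vee}$'' is a synonym for ``$\nu^{\ol\vee}\circ\delta_{W,\ol\omega_R}$ is an isomorphism''. One could additionally note, as a sanity check, that surjectivity of $\nu^{\ol\vee}\circ\delta_{W,\ol\omega_R}$ already forces $\nu$ to be an isomorphism whenever $V$ is $\ol\omega_R$-reflexive, which foreshadows the ``reflexivity of $V$'' statement in the main Theorem~\ref{60}; but this is not needed for the present proposition.
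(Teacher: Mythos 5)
Your proposal is correct and follows essentially the same route as the paper, whose proof of Proposition~\ref{58} is precisely the one-line combination of Proposition~\ref{33} and Lemma~\ref{35}; you merely spell out that $\ol\delta=\xi\circ\nu^{\ol\vee}\circ\delta_{W,\ol\omega_R}$ with $\xi$ an isomorphism, so $\ol\delta$ is an isomorphism exactly when $\nu^{\ol\vee}\circ\delta_{W,\ol\omega_R}$ is. The extra sanity-check remark is harmless and correctly identified as unnecessary.
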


\begin{proof}
The claim follows from Proposition~\ref{33} and Lemma~\ref{35}.
\end{proof}

%%%%%%%%%%%%%%%%%%%%%%%%%%%%%%%%%%%%%%%%%%%%%%%%%%%%%%%%%%%%%%%%%%%%%%%%%%%%%%%

\begin{lem}\label{50}
Assume that $R$ is Gorenstein and consider an $I$-free approximation \eqref{5}.
Then the map $\nu$ from Lemma~\ref{36} fits into a commutative diagram
\[
\xymatrix@C=6em@R=3em{
W^{\ol\vee} & V\ar[d]^-{\delta_{V,\ol\omega_R}}\ar[l]_-{\nu} & \\
W^{\ol\vee\ol\vee\ol\vee}\ar[u]^-{\delta_{W,\ol\omega_R}^{\ol\vee}} & V^{\ol\vee\ol\vee}\ar[l]^-{\nu^{\ol\vee\ol\vee}}\ar[ul]_-{(\nu^{\ol\vee}\circ\delta_{W,\ol\omega_R})^{\ol\vee}}.
}
\]
\end{lem}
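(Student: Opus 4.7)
The plan is to split the asserted commutative diagram into its two constituent cells---the triangle below the diagonal and the triangle above it---and verify each separately. Both assertions reduce to nothing more than functoriality of $-^{\ol\vee}$ together with the standard triangle identity relating $\delta_{-,\ol\omega_R}$ and its $\ol\vee$-dual.

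First I would check the bottom-left triangle, which claims
\[
(\nu^{\ol\vee}\circ\delta_{W,\ol\omega_R})^{\ol\vee}=\delta_{W,\ol\omega_R}^{\ol\vee}\circ\nu^{\ol\vee\ol\vee}.
\]
This is immediate from contravariant functoriality of $-^{\ol\vee}$ applied to the composition $\nu^{\ol\vee}\circ\delta_{W,\ol\omega_R}\colon W\to V^{\ol\vee}$: the dual of a composite is the composite of the duals in the reverse order.

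For the upper-right triangle I need to establish
\[
\nu=(\nu^{\ol\vee}\circ\delta_{W,\ol\omega_R})^{\ol\vee}\circ\delta_{V,\ol\omega_R}.
\]
Using the previous step, the right-hand side equals $\delta_{W,\ol\omega_R}^{\ol\vee}\circ\nu^{\ol\vee\ol\vee}\circ\delta_{V,\ol\omega_R}$. Naturality of $\delta_{-,\ol\omega_R}$ applied to the morphism $\nu\colon V\to W^{\ol\vee}$ gives the commutative square $\delta_{W^{\ol\vee},\ol\omega_R}\circ\nu=\nu^{\ol\vee\ol\vee}\circ\delta_{V,\ol\omega_R}$, so substituting rewrites the expression as $\delta_{W,\ol\omega_R}^{\ol\vee}\circ\delta_{W^{\ol\vee},\ol\omega_R}\circ\nu$. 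The standard triangle identity $\delta_{W,\ol\omega_R}^{\ol\vee}\circ\delta_{W^{\ol\vee},\ol\omega_R}=\id_{W^{\ol\vee}}$ then collapses this composite to $\nu$, as required.

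The argument is entirely formal, so there is no genuine obstacle; the only point that requires some care is the bookkeeping among the successive applications of $-^{\ol\vee}$ to avoid confusing, e.g., the two distinct evaluation maps $\delta_{W,\ol\omega_R}$ and $\delta_{W^{\ol\vee},\ol\omega_R}$ that appear in the triangle identity.
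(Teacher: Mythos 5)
Your proof is correct. It establishes the same composite identity the paper proves, namely $\nu=\delta_{W,\ol\omega_R}^{\ol\vee}\circ\nu^{\ol\vee\ol\vee}\circ\delta_{V,\ol\omega_R}$ (with the lower-left triangle being, as you say, mere contravariant functoriality of $-^{\ol\vee}$), but it gets there by a different decomposition: you factor the verification through two named standard facts, the naturality square $\delta_{W^{\ol\vee},\ol\omega_R}\circ\nu=\nu^{\ol\vee\ol\vee}\circ\delta_{V,\ol\omega_R}$ and the zigzag identity $\delta_{W,\ol\omega_R}^{\ol\vee}\circ\delta_{W^{\ol\vee},\ol\omega_R}=\id_{W^{\ol\vee}}$, both of which hold for arbitrary modules with no reflexivity hypothesis. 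The paper instead verifies the composite identity in one shot by an element chase: for $v\in V$ and $w\in W$ it expands $(\delta_{W,\ol\omega_R}^{\ol\vee}\circ\nu^{\ol\vee\ol\vee}\circ\delta_{V,\ol\omega_R})(v)(w)$ step by step using only the definitions of dual maps and evaluation maps, arriving at $\nu(v)(w)$. Unwinding your two lemmas on elements reproduces exactly that chase, so the mathematical content is identical; what your organization buys is modularity and a clear record of which formal properties are used (naturality plus the triangle identity), while the paper's version is self-contained and avoids appealing to facts it has not stated. Either proof is acceptable here.
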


\begin{proof}
For any $v\in V$ and $w\in W$ we have
\begin{align*}
(\delta_{W,\ol\omega_R}^{\ol\vee}\circ\nu^{\ol\vee\ol\vee}\circ\delta_{V,\ol\omega_R})(v)(w)
&=\delta_{W,\ol\omega_R}^{\ol\vee}(\nu^{\ol\vee\ol\vee}(\delta_{V,\ol\omega_R}(v)))(w)
=\delta_{W,\ol\omega_R}^{\ol\vee}(\delta_{V,\ol\omega_R}(v)\circ\nu^{\ol\vee})(w)\\
&=(\delta_{V,\ol\omega_R}(v)\circ\nu^{\ol\vee})(\delta_{W,\ol\omega_R}(w))
=\delta_{V,\ol\omega_R}(v)(\delta_{W,\ol\omega_R}(w)\circ\nu)\\
&=\delta_{W,\ol\omega_R}(w)(\nu(v))
=\nu(v)(w)
\end{align*}
and hence $\nu=\delta_{W,\ol\omega_R}^{\ol\vee}\circ\nu^{\ol\vee\ol\vee}\circ\delta_{V,\ol\omega_R}$ as claimed.
\end{proof}

%%%%%%%%%%%%%%%%%%%%%%%%%%%%%%%%%%%%%%%%%%%%%%%%%%%%%%%%%%%%%%%%%%%%%%%%%%%%%%%

\begin{cor}\label{43}
Assume that $R$ is Gorenstein and consider an $I$-free approximation \eqref{5} of an $I$-reflexive $R$-module $M$.
Then $V$ in diagram~\eqref{24} is ($\ol\omega_R$-)reflexive if and only if $\nu$ in Lemma~\ref{36} identifies $V=W^{\ol\vee}$.
\end{cor}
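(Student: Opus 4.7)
The plan is to read off the corollary directly from the commutative triangle of Lemma~\ref{50} by using the hypothesis of $I$-reflexivity of $M$ via Proposition~\ref{58}.

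First I would rewrite the triangle in Lemma~\ref{50} in the form of the factorization
\[
\nu=\bigl(\delta_{W,\ol\omega_R}^{\ol\vee}\circ\nu^{\ol\vee\ol\vee}\bigr)\circ\delta_{V,\ol\omega_R}=\bigl(\nu^{\ol\vee}\circ\delta_{W,\ol\omega_R}\bigr)^{\ol\vee}\circ\delta_{V,\ol\omega_R},
\]
where the second equality is the identity of the diagonal arrow in that lemma. This isolates $\delta_{V,\ol\omega_R}$ as one factor and packages the remaining composition as the $\ol\vee$-dual of the comparison map appearing in Proposition~\ref{58}.

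Next I would invoke $I$-reflexivity of $M$. By Proposition~\ref{58}, $M$ being $I$-reflexive means precisely that $\nu^{\ol\vee}\circ\delta_{W,\ol\omega_R}\colon W\to V^{\ol\vee}$ is an isomorphism. Since $-^{\ol\vee}$ is a functor, its $\ol\vee$-dual $(\nu^{\ol\vee}\circ\delta_{W,\ol\omega_R})^{\ol\vee}\colon V^{\ol\vee\ol\vee}\to W^{\ol\vee}$ is likewise an isomorphism. Thus $\nu$ is the composition of this isomorphism with $\delta_{V,\ol\omega_R}$, so $\nu$ is an isomorphism if and only if $\delta_{V,\ol\omega_R}$ is one, which is exactly the claim.

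There is really no obstacle of substance here: the content has been front-loaded into Lemma~\ref{50} and Proposition~\ref{58}. The only point requiring a moment of care is matching the diagonal of the triangle in Lemma~\ref{50} with the map from Proposition~\ref{58}, i.e.\ recognizing $(\nu^{\ol\vee}\circ\delta_{W,\ol\omega_R})^{\ol\vee}=\delta_{W,\ol\omega_R}^{\ol\vee}\circ\nu^{\ol\vee\ol\vee}$; once this identification is made, the two-out-of-three principle for compositions of isomorphisms yields both directions of the equivalence simultaneously.
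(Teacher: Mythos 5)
Your proof is correct and matches the paper's approach exactly: the paper's own proof simply cites Proposition~\ref{58} and Lemma~\ref{50}, and your argument spells out precisely how they combine—dualizing the isomorphism $\nu^{\ol\vee}\circ\delta_{W,\ol\omega_R}$ from Proposition~\ref{58} and factoring $\nu$ through it via Lemma~\ref{50}. The identification $(\nu^{\ol\vee}\circ\delta_{W,\ol\omega_R})^{\ol\vee}=\delta_{W,\ol\omega_R}^{\ol\vee}\circ\nu^{\ol\vee\ol\vee}$ you single out is indeed the only point of care, and you handle it correctly by contravariant functoriality of $-^{\ol\vee}$.
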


\begin{proof}
The claim follows from Proposition~\ref{58} and Lemma~\ref{50}.
\end{proof}

%%%%%%%%%%%%%%%%%%%%%%%%%%%%%%%%%%%%%%%%%%%%%%%%%%%%%%%%%%%%%%%%%%%%%%%%%%%%%%%
\subsection{\texorpdfstring{$R$}{R}-dual \texorpdfstring{$I$}{I}-free approximation}\label{21}
%%%%%%%%%%%%%%%%%%%%%%%%%%%%%%%%%%%%%%%%%%%%%%%%%%%%%%%%%%%%%%%%%%%%%%%%%%%%%%%

In this subsection we consider the $R$-dual of an $I$-free approximation \eqref{5}.
The interesting part of the long exact Ext-sequence of $-^\vee$ applied to \eqref{5} turns out to be
\begin{equation}\label{15}
\xymatrix@C=1em{
	0 & \ar[l]\Ext^k_R(M,\omega_R) & \ar[l]\Ext^k_R(W,\omega_R) & \ar[l]_-\beta\Ext^{k-1}_R(IF,\omega_R) & \ar[l]\Ext^{k-1}_R(M,\omega_R) & \ar[l]0.
}
\end{equation}
In fact, applying $-^\vee$ to \eqref{4} yields (see Lemma~\ref{12} and \cite[Thm.~3.3.10.(c).(ii)]{BH93})
\[
\Ext_R^i(IF,\omega_R)=F^*\otimes_R\Ext_R^i(I,\omega_R)=F^*\otimes_R\Ext_R^{i+1}(\ol R,\omega_R)=0\text{ for } i\ne 0,k-1.
\]
In case both $R$ and $\ol R$ are Gorenstein, we will identify the map $\beta$ to its image with the map $\ol\alpha$ in \eqref{24} (see Corollary~\ref{49}).
In \S\ref{23} this fact will serve to relate the Gorenstein dimension of $M$ to the depth of $V$.

%%%%%%%%%%%%%%%%%%%%%%%%%%%%%%%%%%%%%%%%%%%%%%%%%%%%%%%%%%%%%%%%%%%%%%%%%%%%%%%
\smallskip

In order to describe the map $\beta$ in \eqref{15} we fix a canonical module $\omega_R$ of $R$ with an injective resolution $(E^\bullet,\partial^\bullet)$,
\[
\xymatrix{
0\ar[r] & \omega_R\ar[r] & E^0\ar[r]^{\partial^0} & E^1\ar[r]^{\partial^1} & E^2\ar[r]^{\partial^2} & \cdots
}.
\]
We use it to fix representatives 
\[
\Ext_R^i(-,\omega_R):=H^i\Hom_R(-,E^\bullet).
\]
Then (see \cite[Thms.~3.3.7.(b), 3.3.10.(c).(ii)]{BH93})
\begin{equation}\label{29}
H^i\Ann_{E^\bullet}(I)=H^i\Hom(\ol R,E^\bullet)=\Ext_R^i(\ol R,\omega_R)=\delta_{i,k}\cdot\omega_{\ol R}
\end{equation}
where
\[
\omega_{\ol R}:=H^k\Ann_{E^\bullet}(I)
\]
is a canonical module of $\ol R$.

%%%%%%%%%%%%%%%%%%%%%%%%%%%%%%%%%%%%%%%%%%%%%%%%%%%%%%%%%%%%%%%%%%%%%%%%%%%%%%%

In the sequel we explicit the maps of the following commutative diagram
\begin{equation}\label{20}
\xymatrix{
\Ext^k_R(W,\omega_R)\ar[ddd]_-{\gamma}^-{\cong} & & \Ext^{k-1}_R(IF,\omega_R)\ar[ll]_-{\beta}\\
&& F^*\otimes_R\Ext_R^{k-1}(I,\omega_R) \ar[u]_-{\chi}^-{\cong}\\
& & F^*\otimes_R H^{k-1}(E^\bullet/\Ann_{E^\bullet}(I))\ar[u]_-{F^*\otimes H^{k-1}(\tau^\bullet)}^-{\cong} \ar[d]^-{F^*\otimes\zeta}_-{\cong}\\
\llap{$\Hom_{\ol R}(W,\omega_{\ol R})=$}W^{\vee} & V'\ar@{_{(}->}[l]_-{\nu'} & F^*\otimes_R\omega_{\ol R}\ar@{->>}[l]_-{\alpha'}
}
\end{equation}
which defines the map $\nu'\circ\alpha'$ and its image $V'$.
The maps $\tau^\bullet$, $\chi$, $\zeta$, $\gamma$ and $\alpha'$ are described in Lemmas~\ref{10}, \ref{12}, \ref{31}, \ref{13} and Proposition~\ref{18} respectively.

%%%%%%%%%%%%%%%%%%%%%%%%%%%%%%%%%%%%%%%%%%%%%%%%%%%%%%%%%%%%%%%%%%%%%%%%%%%%%%%

\begin{lem}\label{10}
For any injective $R$-module $E$ there is a canonical isomorphism
\[
\tau\colon E/\Ann_E(I)\to\Hom_R(I,E),\quad\ol e\mapsto-\cdot e=(x\mapsto x\cdot e).
\]
In particular, there is a canonical isomorphism $\tau^\bullet\colon E^\bullet/\Ann_{E^\bullet}(I)\to\Hom_R(I,E^\bullet)$.
\end{lem}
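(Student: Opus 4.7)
The plan is to verify that the stated map is a well-defined $R$-linear isomorphism by checking injectivity and surjectivity separately, and then to deduce the complex-level statement termwise.

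First I would observe that the assignment $e\mapsto(x\mapsto x\cdot e)$ is an $R$-linear map $E\to\Hom_R(I,E)$. Its kernel is precisely $\Ann_E(I)$ by definition, so the map descends to a well-defined $R$-linear injection $\tau\colon E/\Ann_E(I)\into\Hom_R(I,E)$. This handles the easy half.

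For surjectivity, the key input is the injectivity of $E$. Given any $\varphi\in\Hom_R(I,E)$, the inclusion $I\into R$ together with the injectivity of $E$ produces an extension $\wt\varphi\colon R\to E$ with $\wt\varphi\vert_I=\varphi$. Setting $e:=\wt\varphi(1)\in E$, one has $\varphi(x)=\wt\varphi(x)=x\cdot\wt\varphi(1)=x\cdot e$ for every $x\in I$, so $\varphi=\tau(\ol e)$, proving $\tau$ is surjective and hence bijective. Canonicity is visible since no choice was made in the formula for $\tau$ (only the extension $\wt\varphi$ in the surjectivity argument is non-canonical).

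For the last sentence, each term $E^i$ of the injective resolution $E^\bullet$ is injective, so applying the first part degree by degree yields isomorphisms $\tau^i\colon E^i/\Ann_{E^i}(I)\to\Hom_R(I,E^i)$. These are compatible with the differentials (which act by $R$-linear maps and hence preserve annihilators of $I$ and commute with the multiplication-by-$x$ formulas), so they assemble into the claimed isomorphism of complexes $\tau^\bullet$. I do not expect any real obstacle here; the only subtle point is remembering that injectivity of $E$ is what forces surjectivity of $\tau$ via the Baer-style extension along $I\into R$.
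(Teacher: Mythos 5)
Your proof is correct and is essentially the paper's argument in unpacked form: the paper applies the exact functor $\Hom_R(-,E)$ to $0\to I\to R\to\ol R\to 0$ and reads off the claim, where the surjectivity of the restriction $\Hom_R(R,E)\to\Hom_R(I,E)$ is precisely your Baer-style extension step and the identification $\Hom_R(\ol R,E)=\Ann_E(I)$ is precisely your kernel computation. Both arguments rest on the same single use of injectivity of $E$, so there is nothing to add.
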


\begin{proof}
Applying the exact functor $\Hom_R(-,E)$ to \eqref{4} yields a short exact sequence
\[
0\gets\Hom_R(I,E)\gets\Hom_R(R,E)\gets\Hom_R(\ol R,E)\gets0.
\]
Identifying $E=\Hom_R(R,E)$, $e\mapsto -\cdot e$, and hence 
\begin{equation}\label{42}
\Hom_R(\ol R,E)=\Ann_E(I)
\end{equation}
yields the claim.
\end{proof}

%%%%%%%%%%%%%%%%%%%%%%%%%%%%%%%%%%%%%%%%%%%%%%%%%%%%%%%%%%%%%%%%%%%%%%%%%%%%%%%

\begin{lem}\label{12}
For any $i\in\NN$ there is a canonical isomorphism
\begin{align*}
\chi_i\colon
F^*\otimes_R\Ext_R^i(I,\omega_R)=F^*\otimes_R H^i\Hom_R(I,E^\bullet)
&\to H^i\Hom_R(IF,E^\bullet)=\Ext_R^i(IF,\omega_R),\\
\varphi\otimes[\psi]&\mapsto[\varphi\vert_{IF}\cdot\wt\psi(1)]=[(\kappa\circ\iota)^*(\varphi)\cdot\wt\psi(1)]
\end{align*}
where $\wt\psi\in\Hom_R(R,E^\bullet)$ extends $\psi\in\Hom_R(I,E^\bullet)$.
We set $\chi:=\chi_{k-1}$.
\end{lem}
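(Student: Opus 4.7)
The plan is to exhibit $\chi_i$ as the cohomology of a chain-level isomorphism that exists because $F$ is finitely generated free. At the level of complexes I would define the natural map
\[
\chi^\bullet\colon F^*\otimes_R\Hom_R(I,E^\bullet)\to\Hom_R(IF,E^\bullet),\quad\varphi\otimes\psi\mapsto\bigl(xe\mapsto\varphi(e)\psi(x)\bigr),
\]
which is well-defined since the assignment is $R$-bilinear in $(\varphi,\psi)$ and clearly commutes with the differentials induced by $\partial^\bullet$.

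Next I would show that $\chi^\bullet$ is an isomorphism by choosing a basis $e_1,\dots,e_r$ of $F$ with dual basis $\varphi_1,\dots,\varphi_r$. Under the resulting decompositions $F^*\otimes_R\Hom_R(I,E^\bullet)=\bigoplus_j\varphi_j\otimes\Hom_R(I,E^\bullet)$ and $\Hom_R(IF,E^\bullet)=\Hom_R(\bigoplus_jIe_j,E^\bullet)=\bigoplus_j\Hom_R(I,E^\bullet)$, the map $\chi^\bullet$ becomes the identity componentwise. Passing to cohomology, the flatness of the free module $F^*$ allows $H^i$ to commute past $F^*\otimes_R-$, yielding the desired isomorphism $\chi_i:=H^i(\chi^\bullet)$.

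It remains to verify the stated formula on representatives. For $\psi\in\Hom_R(I,E^i)$ and an extension $\wt\psi\colon R\to E^i$, the identity $\wt\psi(x)=x\wt\psi(1)=\psi(x)$ for $x\in I$ gives
\[
\varphi|_{IF}(xe)\cdot\wt\psi(1)=x\varphi(e)\wt\psi(1)=\varphi(e)\psi(x),
\]
which matches $\chi^\bullet(\varphi\otimes\psi)(xe)$. The equality $\varphi|_{IF}=(\kappa\circ\iota)^*(\varphi)$ is immediate from the fact that $\kappa\circ\iota\colon IF\hookrightarrow F$ is the inclusion, as read off from diagram~\eqref{6}. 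Independence of the choice of extension $\wt\psi$ follows since two extensions differ by a map vanishing on $I$, hence by an element of $\Ann_{E^\bullet}(I)$ at $1$, and multiplication by $\varphi|_{IF}(xe)\in I$ annihilates this difference.

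There is no substantive obstacle here: the argument is a routine consequence of the finite freeness of $F$ and standard Hom–tensor manipulations. The only point requiring care is to invoke finite generation of $F$ when decomposing both sides, since without it the implicit identification $\Hom_R(F,-)=F^*\otimes_R-$ would fail and with it the chain-level isomorphism.
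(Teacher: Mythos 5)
Your proof is correct and follows essentially the same route as the paper's: both exhibit $\chi_i$ as the cohomology of the canonical chain-level identification $F^*\otimes_R\Hom_R(I,E^\bullet)\cong\Hom_R(IF,E^\bullet)$ coming from finite freeness of $F$ and Hom-tensor adjunction, and both then verify the stated formula $\varphi\otimes[\psi]\mapsto[\varphi\vert_{IF}\cdot\wt\psi(1)]$ on representatives. The only cosmetic difference is that the paper first converts $F^*\otimes_R-$ into $\Hom_R(F,-)$ and commutes $H^i$ past that exact functor, whereas you commute $H^i$ past $F^*\otimes_R-$ directly by flatness and establish the chain-level isomorphism componentwise on a basis.
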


\begin{proof}
For any $i\in\NN$ there is a sequence of canonical isomorphisms
\begin{align*}
F^*\otimes_R H^i\Hom_R(I,E^\bullet)
&=\Hom_R(F,H^i\Hom_R(I,E^\bullet))\\
&= H^i\Hom_R(F,\Hom_R(I,E^\bullet))\\
&= H^i\Hom_R(IF,E^\bullet),
\end{align*}
the latter one being Hom-tensor adjunction, sending
\begin{align*}
\varphi\otimes[\psi] 
&\mapsto(f\mapsto\varphi(f)\cdot[\psi]=[\varphi(f)\cdot\psi])\\
&\mapsto[f\mapsto\varphi(f)\cdot\psi]\\
&\mapsto[x\cdot f\mapsto\varphi(f)\cdot\psi(x)=\varphi(x\cdot f)\cdot \wt\psi(1)]=[\varphi\vert_{IF}\cdot\wt\psi(1)]
\end{align*}
where $x\in I$ and $f\in F$.
\end{proof}

%%%%%%%%%%%%%%%%%%%%%%%%%%%%%%%%%%%%%%%%%%%%%%%%%%%%%%%%%%%%%%%%%%%%%%%%%%%%%%%

\begin{lem}\label{31}
There is a connecting isomorphism
\begin{align*}
\zeta\colon H^{k-1}(E^\bullet/\Ann_{E^\bullet}(I))&\to H^k\Ann_{E^\bullet}(I)=\omega_{\ol R},\\
[\ol e]&\mapsto[\partial^{k-1}(e)].
\end{align*}
\end{lem}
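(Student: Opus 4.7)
The plan is to obtain $\zeta$ as the connecting homomorphism of the long exact cohomology sequence associated to the short exact sequence of complexes
\[
0\to\Ann_{E^\bullet}(I)\to E^\bullet\to E^\bullet/\Ann_{E^\bullet}(I)\to 0.
\]
This is canonical and immediately gives the asserted formula $[\ol e]\mapsto[\partial^{k-1}(e)]$, since a cocycle $\ol e\in E^{k-1}/\Ann_{E^{k-1}}(I)$ lifts to some $e\in E^{k-1}$, and the condition that $\ol e$ is a cocycle means $\partial^{k-1}(e)\in\Ann_{E^k}(I)$, whereupon $\zeta[\ol e]=[\partial^{k-1}(e)]$ by definition of the snake/connecting map.

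It remains to verify that this $\zeta$ is an isomorphism in degree $k-1$. For this I would invoke two vanishing inputs. First, because $E^\bullet$ is an injective resolution of $\omega_R$, its cohomology is concentrated in degree $0$, so $H^{k-1}(E^\bullet)=0=H^k(E^\bullet)$ (this uses $k\ge 2$, so that both of the relevant indices are strictly positive). Second, the identification \eqref{29} gives $H^i\Ann_{E^\bullet}(I)=\Ext_R^i(\ol R,\omega_R)=0$ for $i<k$ and $H^k\Ann_{E^\bullet}(I)=\omega_{\ol R}$.

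Plugging these into the relevant portion of the long exact sequence
\[
H^{k-1}(E^\bullet)\to H^{k-1}(E^\bullet/\Ann_{E^\bullet}(I))\xrightarrow{\zeta} H^k\Ann_{E^\bullet}(I)\to H^k(E^\bullet)
\]
sandwiches $\zeta$ between two zero terms, forcing it to be an isomorphism onto $\omega_{\ol R}$. There is no genuine obstacle here: the argument is entirely formal homological algebra, and the only subtlety worth flagging explicitly is the hypothesis $k\ge 2$, which ensures that the degree $k-1$ lies strictly above the degree in which $E^\bullet$ has cohomology, so that no contribution from $\omega_R$ itself pollutes the connecting map.
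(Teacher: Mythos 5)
Your proof is correct and is essentially the same as the paper's: both realize $\zeta$ as the connecting homomorphism of the short exact sequence $0\to\Ann_{E^\bullet}(I)\to E^\bullet\to E^\bullet/\Ann_{E^\bullet}(I)\to 0$ and conclude from the vanishing $H^i(E^\bullet)=0$ for $i\ge k-1\ge 1$ (i.e.\ in degrees $k-1$ and $k$) that $\zeta$ is an isomorphism. Your extra verification of the formula $[\ol e]\mapsto[\partial^{k-1}(e)]$ and the citation of \eqref{29} merely make explicit what the paper leaves implicit.
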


\begin{proof}
The connecting homomorphism $\zeta$ in degree $k$ of the short exact sequence
\[
0\to \Ann_{E^\bullet}(I)\to E^\bullet \to E^\bullet/\Ann_{E^\bullet}(I)\to 0
\]
is an isomorphism since $E^\bullet$ is a resolution and hence $H^i(E^\bullet)=0$ for $i\ge k-1\ge1$.
\end{proof}

%%%%%%%%%%%%%%%%%%%%%%%%%%%%%%%%%%%%%%%%%%%%%%%%%%%%%%%%%%%%%%%%%%%%%%%%%%%%%%%

\begin{lem}\label{13}
For any $\ol R$-module $N$ there is a canonical isomorphism
\begin{align*}
\gamma\colon H^k\Hom_R(N,E^\bullet)&\to\Hom_{\ol R}(N,H^k\Ann_{E^\bullet}(I))=N^\vee,\\
[\phi]&\mapsto(n\mapsto [\phi(n)]).
\end{align*}
\end{lem}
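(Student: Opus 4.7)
The plan is to reduce the statement to a direct cohomological computation on the complex $A^\bullet:=\Ann_{E^\bullet}(I)$. First I would identify $\Hom_R(N,E^\bullet)=\Hom_{\ol R}(N,A^\bullet)$ as complexes of $\ol R$-modules: since $N$ is annihilated by $I$, any $R$-linear $\phi\colon N\to E^i$ satisfies $x\phi(n)=\phi(xn)=0$ for $x\in I$, so has image in $A^i=\Ann_{E^i}(I)$, and any $R$-homomorphism between $\ol R$-modules is automatically $\ol R$-linear. Writing $Z^i:=\ker(\partial^i|_{A^i})$ and $B^i:=\partial^{i-1}(A^{i-1})$, the prescription $\gamma([\phi])(n):=[\phi(n)]$ defines the claimed map: a cocycle $\phi$ takes values in $Z^k$, and a coboundary $\phi=\partial^{k-1}\psi$ takes values in $B^k$, so $[\phi(n)]\in Z^k/B^k=\omega_{\ol R}$ is independent of the representative.

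For bijectivity I would establish the two vanishings $\Ext_{\ol R}^1(N,B^k)=0=\Ext_{\ol R}^1(N,Z^{k-1})$. Each $A^i=\Hom_R(\ol R,E^i)$ is $\ol R$-injective: by Hom-tensor adjunction, $\Hom_{\ol R}(-,A^i)=\Hom_R(-,E^i)$ on $\ol R$-modules, which is exact because $E^i$ is $R$-injective. Hence $\Ext_{\ol R}^j(N,A^i)=0$ for all $j\ge 1$. By \eqref{29} the complex $A^\bullet$ is acyclic in degrees $<k$, so $B^i=Z^i$ for $i<k$ and $B^0=0$. The short exact sequences $0\to Z^i\to A^i\to B^{i+1}\to 0$ then yield, via their long exact Ext-sequences, isomorphisms $\Ext_{\ol R}^{j+1}(N,Z^i)\cong\Ext_{\ol R}^j(N,B^{i+1})$ for $j\ge 1$; iterating down to $B^0=0$ produces both vanishings.

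I would then deduce bijectivity of $\gamma$. The long exact sequence of $0\to B^k\to Z^k\to\omega_{\ol R}\to 0$, combined with $\Ext_{\ol R}^1(N,B^k)=0$, yields a surjection $\Hom_{\ol R}(N,Z^k)\onto N^\vee$; lifting any $f\in N^\vee$ to a map $\wt f\colon N\to Z^k\subseteq A^k$ produces a cocycle with $\gamma([\wt f])=f$, so $\gamma$ is surjective. The long exact sequence of $0\to Z^{k-1}\to A^{k-1}\to B^k\to 0$, combined with $\Ext_{\ol R}^1(N,Z^{k-1})=0$, gives $\Hom_{\ol R}(N,A^{k-1})\onto\Hom_{\ol R}(N,B^k)$: if $\gamma([\phi])=0$, then $\phi$ factors through $B^k$ and lifts to some $\psi\colon N\to A^{k-1}$ with $\phi=\partial^{k-1}\psi$, so $[\phi]=0$, and $\gamma$ is injective.

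The main obstacle is the iterated Ext chase in the second paragraph; conceptually it just reflects that $A^\bullet$ is a bounded-below complex of $\ol R$-injectives with cohomology concentrated in degree $k$ at $\omega_{\ol R}$, hence homotopy equivalent to a $k$-fold shift of an injective $\ol R$-resolution of $\omega_{\ol R}$, from which the desired isomorphism follows at once. The explicit formula $[\phi]\mapsto(n\mapsto[\phi(n)])$ is visible in this homotopy equivalence, matching the statement.
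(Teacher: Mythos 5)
Your proof is correct, but it takes a genuinely different route from the paper's. The paper fixes an $\ol R$-projective resolution $P_\star$ of $N$, forms the double complex $\Hom_R(P_\star,E^\bullet)=\Hom_{\ol R}(P_\star,\Ann_{E^\bullet}(I))$ (using the same adjunction \eqref{42} that you use), and compares its two spectral sequences: by \eqref{29} the first degenerates to the row $q=k$ with entries $\Ext^p_{\ol R}(N,\omega_{\ol R})$, the second to the column $p=0$ with entries $\Ext^q_R(N,\omega_R)$, and $\gamma$ is the resulting edge isomorphism $''E_2^{0,k}\to{'E}_2^{0,k}$. You instead stay on the complex $A^\bullet=\Ann_{E^\bullet}(I)$ itself: after the identification $\Hom_R(N,E^\bullet)=\Hom_{\ol R}(N,A^\bullet)$ you observe that each $A^i$ is $\ol R$-injective and run a dimension-shifting argument on the kernel/image sequences $0\to Z^i\to A^i\to B^{i+1}\to 0$ to obtain the two Ext-vanishings that give surjectivity and injectivity at the cocycle level; the chase is sound, including the endpoint $Z^0=B^0=0$ coming from acyclicity in degree $0$ (here $k\ge2$). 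Both arguments rest on the same two pillars, namely the adjunction \eqref{42} and the concentration \eqref{29} of $H^\bullet(A^\bullet)$ in degree $k$, but yours avoids spectral sequences entirely and, importantly, actually verifies the explicit formula $[\phi]\mapsto(n\mapsto[\phi(n)])$, which in the paper is only implicit in the assertion that the edge isomorphism \enquote{is $\gamma$}. What the paper's double-complex argument buys in exchange is generality at no extra cost: it yields $\Ext^{k+j}_R(N,\omega_R)\cong\Ext^j_{\ol R}(N,\omega_{\ol R})$ for all $j$ simultaneously (a change-of-rings isomorphism), whereas your explicit chase, as written, treats only degree $k$ --- though your closing remark, that $A^\bullet$ is a bounded-below complex of $\ol R$-injectives quasi-isomorphic to a $k$-fold shift of an injective resolution of $\omega_{\ol R}$, recovers that same generality if one wishes.
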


\begin{proof}
Fix an $\ol R$-projective resolution $(P_\star,\delta_\star)$ of $N$ and consider the double complex
\[
A^{\star,\bullet}:=
\Hom_R(P_\star,E^\bullet)=
\Hom_{\ol R}(P_\star,\Hom_R(\ol R,E^\bullet))=
\Hom_{\ol R}(P_\star,\Ann_{E^\bullet}(I))
\]
whose alternate representation is due to Hom-tensor adjunction and \eqref{42}.
It yields two spectral sequences with the same limit.
By exactness of $\Hom_{\ol R}(P_\star,-)$ and \eqref{29} and using the alternate representation the $E_2$-page of the first spectral sequence identifies with
\[
'E^{p,q}_2=
H^p(H^{\star,q}(A^{\star,\bullet}))=
H^p\Hom_{\ol R}(P_\star,H^q\Ann_{E^\bullet}(I))=
\delta_{k,q}\cdot H^p\Hom_{\ol R}(P_\star,\omega_{\ol R}).
\]
By exactness of $\Hom_R(-,E^\bullet)$ the $E_2$-page of the second spectral sequence reads
\[
''E^{p,q}_2=
H^q(H^{p,\bullet}(A^{\star,\bullet}))=
H^q\Hom_R(H^pP_\star,E^\bullet)=
\delta_{p,0}\cdot H^q\Hom_R(N,E^\bullet).
\]
So both spectral sequences degenerate.
The resulting isomorphism $''E_2^{0,k}\to{'E}_2^{0,k}$ is $\gamma$.
\end{proof}

%%%%%%%%%%%%%%%%%%%%%%%%%%%%%%%%%%%%%%%%%%%%%%%%%%%%%%%%%%%%%%%%%%%%%%%%%%%%%%%

\begin{prp}\label{18}
Assume that $R$ is Gorenstein and consider an $I$-free approximation \eqref{5}.
Then the map $\alpha'$ in diagram~\eqref{20} is induced by
\begin{align*}
\nu'\circ\alpha'\colon F^*\otimes_R\omega_{\ol R}=F^*\otimes_RH^k\Ann_{E^\bullet}(I)&\to\Hom_{\ol R}(W,H^k\Ann_{E^\bullet}(I))=W^{\vee},\\
\varphi\otimes[a]&\mapsto \ol\varphi\cdot [a],
\end{align*}
where $\varphi\mapsto\ol\varphi$ is \eqref{26} with $\omega_R=R$.
In particular, $\Ext^k_R(M,R)=0$ if $\nu'$ is surjective.
\end{prp}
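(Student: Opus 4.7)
The plan is to perform a direct diagram chase through~\eqref{20}, tracking an elementary tensor $\varphi\otimes[a]\in F^*\otimes_R\omega_{\ol R}$ upward along the right column (inverting $F^*\otimes\zeta$ and $F^*\otimes H^{k-1}(\tau^\bullet)$, then applying $\chi$), across via the connecting map $\beta$, and down via $\gamma$, using the explicit formulas from Lemmas~\ref{31}, \ref{10}, \ref{12}, and~\ref{13}. Since $\alpha'$ is defined as the induced surjection onto the image $V'$ of this composite in $W^\vee$, identifying $\nu'\circ\alpha'$ with the stated formula suffices.

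Concretely, I will pick a cocycle $a\in\Ann_{E^k}(I)$ representing $[a]$ and $e\in E^{k-1}$ with $\partial^{k-1}(e)=a$, so Lemma~\ref{31} lifts $[a]$ through $\zeta$ to $[\ol e]$. By Lemma~\ref{10} this corresponds to the cocycle $\psi\colon I\to E^{k-1}$, $x\mapsto xe$, which extends via $\wt\psi(1)=e$; then Lemma~\ref{12} returns the class of $\varphi|_{IF}\cdot e\colon IF\to E^{k-1}$, $u\mapsto\varphi(u)\,e$. The connecting map $\beta$ is computed by extending this cocycle to $M$ and applying $\partial^{k-1}$: the natural choice $\wt\xi:=(\varphi\circ\kappa)\cdot e$ works, since by diagram~\eqref{6} the map $\kappa\circ\iota$ is the inclusion $IF\into F$. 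This yields $\partial^{k-1}\wt\xi\colon m\mapsto(\varphi\circ\kappa)(m)\cdot a$, which descends to $W$ and takes values in $\Ann_{E^k}(I)$ because $\varphi(u)\in I$ for $u\in IF$ whereas $I\cdot a=0$. Applying Lemma~\ref{13} produces $\gamma(\beta(\cdots))\colon w\mapsto[(\varphi\circ\kappa)(m)\cdot a]\in\omega_{\ol R}$ for any preimage $m$ of $w$ under $\rho$. Since the $R$-action on $\omega_{\ol R}$ factors through $\ol R$ and $(\varphi\circ\kappa)(m)$ reduces mod $I$ to $\ol\varphi(w)$ by~\eqref{26} (specialized to $\omega_R=R$), this equals $w\mapsto\ol\varphi(w)\cdot[a]$ as required.

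For the last assertion, \eqref{15} gives $\Ext^k_R(M,\omega_R)=\coker(\beta)$, and under the vertical isomorphisms of~\eqref{20} this identifies with $\coker(\nu'\circ\alpha')=W^\vee/\nu'(V')$, since $\alpha'$ is surjective onto $V'$. Surjectivity of $\nu'$ thus forces $\Ext^k_R(M,\omega_R)=0$, and this equals $\Ext^k_R(M,R)$ under the identification $\omega_R=R$ used throughout. The only slightly delicate bookkeeping step is verifying that $\partial^{k-1}\wt\xi$ actually factors through $W$ and that the resulting cocycle is well defined modulo the choice of preimage $m$ — both reduce to the single observation $I\cdot a=0$, and once it is granted every remaining step is a direct substitution into the formulas of the four lemmas.
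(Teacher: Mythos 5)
Your proposal is correct and follows essentially the same route as the paper: an explicit chase through diagram~\eqref{20} using the formulas of Lemmas~\ref{10}, \ref{12}, \ref{31} and \ref{13}, with the connecting map $\beta$ computed via the same lift $(\varphi\circ\kappa)\cdot e$ and the same observation that $\partial^{k-1}(e)=a\in\Ann_{E^k}(I)$ cancels all choice ambiguities. The paper's proof merely organizes the chase starting from $\varphi\otimes[\ol e]$ and pushing both down via $\zeta$ and up via $\tau,\chi$, which is the same computation you perform after inverting $F^*\otimes\zeta$.
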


\begin{proof}
The proof is done by chasing diagram~\eqref{20} and the diagram
\[
\xymatrix{
0\ar[r] & \Hom_R(W, E^{k-1})\ar[r]^-{\rho^*}\ar[d]^-{(\partial^{k-1})_*} & \Hom_R(M, E^{k-1})\ar[r]^-{\iota^*}\ar[d]^-{(\partial^{k-1})_*} & \Hom_R(IF, E^{k-1})\ar[r]\ar[d]^-{(\partial^{k-1})_*} & 0\\
0\ar[r] & \Hom_R(W, E^{k})\ar[r]^-{\rho^*} & \Hom_R(M, E^k)\ar[r]^-{\iota^*} & \Hom_R(IF, E^k)\ar[r] & 0.
} 
\]
This latter defines the connecting homomorphism $\beta$ in \eqref{15} on representatives as $(\rho^*)^{-1}\circ(\partial^{k-1})_*\circ(\iota^*)^{-1}$ where $(\iota^*)^{-1}$ denotes the choice of any preimage under $\iota^*$.

Let $\varphi\otimes [\ol e]\in F^*\otimes_R H^{k-1}(E^\bullet/\Ann_{E^\bullet}(I))$.
Then by Lemmas~\ref{10}, \ref{12}, \ref{31} and \ref{13}, and diagram~\eqref{17} with $\omega_R=R$
\[
\xymatrix@C=2em{
& [\kappa^*(\varphi)\cdot e]\ar@{|->}[r]^-{H^{k-1}(\iota^*)}\ar@{|->}[d] & [(\iota^*\circ\kappa^*)(\varphi)\cdot e]\\
[((\rho^{-1})^*\circ\kappa^*)(\varphi)\cdot\partial^{k-1}(e)]\ar@{|->}[dd]_-{\gamma}\ar@{|->}[r]^-{H^k(\rho^*)} & [\kappa^*(\varphi)\cdot\partial^{k-1}(e)] & \varphi\otimes[-\cdot e]\ar@{|->}[u]_-\chi\\
& & \varphi\otimes[\ol e]\ar@{|->}[u]_-{F^*\otimes H^{k-1}(\tau^\bullet)}\ar@{|->}[d]^-{F^*\otimes\zeta}\\
(\pi\circ\varphi\circ\kappa\circ\rho^{-1})\cdot[\partial^{k-1}(e)]=\ol\varphi\cdot[\partial^{k-1}(e)] & & \ar@{|->}[ll]_-{\nu'\circ\alpha'}\varphi\otimes[\partial^{k-1}(e)]
}
\]
where $\rho^{-1}$ denotes the choice of any preimage under $\rho$.
By diagram~\eqref{17} and Lemma~\ref{31} the ambiguity of this choice is cancelled when multiplying $(\rho^{-1})^*\circ\kappa^*(\varphi)=\varphi\circ\kappa\circ\rho^{-1}$ with $\partial^{k-1}(e)\in\Ann_{E^\bullet}(I)$.

The particular claim follows from diagram~\eqref{20} and the exact sequence~\eqref{15}.
\end{proof}

%%%%%%%%%%%%%%%%%%%%%%%%%%%%%%%%%%%%%%%%%%%%%%%%%%%%%%%%%%%%%%%%%%%%%%%%%%%%%%%

\begin{cor}\label{49}
Assume that both $R$ and $\ol R$ are Gorenstein and consider an $I$-free approximation \eqref{5}.
Then identifying $\ol\omega_R=\omega_{\ol R}$ (see diagrams~\eqref{24} and \eqref{20}) makes
\[
\alpha'=\ol\alpha,\quad V'=V,\quad\Ext_R^{k-1}(M,R)\cong\ker(\ol\alpha)=M^I/F^I.
\]
In particular, if $M$ is $I$-reflexive, then $\Ext^k_R(M,R)=0$ if and only if $V$ is ($\ol\omega_R$-)reflexive.
\end{cor}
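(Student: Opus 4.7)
The plan is to verify that the setup of diagrams~\eqref{24} and \eqref{20} becomes compatible once one uses the Gorenstein hypotheses to fix $\omega_R=R$ and $\omega_{\ol R}=\ol R$, and then to read off the two $\Ext$-statements from the already-established exact sequence~\eqref{15}.

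First I would install the canonical identifications $\omega_R=R$ and $\omega_{\ol R}=\ol R=\ol\omega_R$, so that $F^{\ol\vee}=\Hom_R(F,\ol\omega_R)$ coincides with $F^*\otimes_R\omega_{\ol R}$ and $W^{\ol\vee}=\Hom_R(W,\ol\omega_R)$ coincides with $W^\vee=\Hom_{\ol R}(W,\omega_{\ol R})$. Then I would compare the two resulting maps $F^{\ol\vee}\to W^{\ol\vee}$: on the one hand $\nu\circ\ol\alpha$ from diagram~\eqref{24}, which by Lemma~\ref{36} sends $\varphi\mapsto\ol\varphi$; on the other hand $\nu'\circ\alpha'$ from diagram~\eqref{20}, which by Proposition~\ref{18} sends $\varphi\otimes[a]\mapsto\ol\varphi\cdot[a]$, agreeing with the first upon specialising $[a]=1\in\ol R$. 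Since $\nu$ and $\nu'$ are both inclusions into $W^{\ol\vee}$ and $\ol\alpha$, $\alpha'$ are surjections from the common source $F^{\ol\vee}$, this forces $\alpha'=\ol\alpha$ and $V'=V$ under the identifications.

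Next I would exploit diagram~\eqref{20} to transport the connecting map $\beta$ of the exact sequence~\eqref{15} (with $\omega_R=R$) into the composition $\nu'\circ\alpha'=\nu\circ\ol\alpha$. Its kernel then equals $\ker(\ol\alpha)$, which by the top row of diagram~\eqref{24} is the image of $\ol\lambda$, namely $M^I/F^I$, producing $\Ext^{k-1}_R(M,R)\cong M^I/F^I$. Its cokernel equals $\coker(\nu)=W^{\ol\vee}/V$, producing $\Ext^k_R(M,R)\cong W^{\ol\vee}/V$. Assuming $M$ is $I$-reflexive, Corollary~\ref{43} characterises reflexivity of $V$ precisely by $\nu$ being an isomorphism, i.e.\ $W^{\ol\vee}/V=0$, which combined with the preceding identification gives the stated equivalence.

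The main obstacle, modest as it is, lies in the bookkeeping of the first comparison: one must verify that the chain of canonical isomorphisms $\chi$, $F^*\otimes H^{k-1}(\tau^\bullet)$, $F^*\otimes\zeta$, and $\gamma$ assembled in diagram~\eqref{20} really does intertwine $\beta$ with $\nu'\circ\alpha'$, and that this latter reduces under $\omega_R=R$ to the map $\varphi\mapsto\ol\varphi$ of Lemma~\ref{36}. Once this identification is in place, the rest of the proof is a direct reading of kernels and cokernels from \eqref{15} and \eqref{24} together with Corollary~\ref{43}.
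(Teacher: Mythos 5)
Your proposal is correct and follows essentially the same route as the paper's own proof: identify $\ol\omega_R=\omega_{\ol R}$ (the paper does this by matching free generators $\varepsilon$, $\wt\varepsilon$), compare $\nu\circ\ol\alpha$ with $\nu'\circ\alpha'$ via Lemma~\ref{36} and Proposition~\ref{18} to get $\alpha'=\ol\alpha$ and $V'=V$, then read $\ker(\beta)$ and $\coker(\beta)$ off sequence~\eqref{15} through diagram~\eqref{20} and the top row of \eqref{24}, finishing with Corollary~\ref{43}. The bookkeeping you flag as the main obstacle is exactly what Proposition~\ref{18} (together with the generator-matching) supplies, so nothing further is missing.
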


\begin{proof}
Let $\varphi\mapsto\ol\varphi$ be \eqref{26} with $\omega_R=R$. 
Pick free generators $\varepsilon\in\omega_R$ and $\wt\varepsilon\in\omega_{\ol R}$ inducing the identification $\ol\omega_R=\omega_{\ol R}$ by sending $\ol\varepsilon=\pi_\omega(\varepsilon)\mapsto\wt\varepsilon$.
Then 
\begin{alignat*}{2}
F^\vee\otimes_R\ol R&=F^*\otimes_R\ol\omega_R=F^*\otimes_R\omega_{\ol R},\quad&
W^{\ol\vee}&=W^\vee,\\
(\varphi\cdot\varepsilon)\otimes\ol1&\leftrightarrow\varphi\otimes\ol\varepsilon\leftrightarrow\varphi\otimes\wt\varepsilon,\quad&
\ol\varphi\cdot\ol\varepsilon&\leftrightarrow\ol\varphi\cdot\wt\varepsilon.
\end{alignat*}
By diagram~\eqref{17} and Lemma~\ref{36} the map $F^\vee\otimes_R\ol R\to W^{\ol\vee}$ induced by $\nu\circ\alpha$ sends
\[
(\varphi\cdot\varepsilon)\otimes\ol1\mapsto\ol{\varphi\cdot\varepsilon}
=\pi_\omega\circ((\varphi\circ\kappa\circ\rho^{-1})\cdot\varepsilon)
=(\pi\circ\varphi\circ\kappa\circ\rho^{-1})\cdot\pi_\omega(\varepsilon)
=\ol\varphi\cdot\ol\varepsilon.
\]
By Proposition~\ref{18} this map coincides with $\nu'\circ\alpha'$ subject to the above identifications.
This shows that $\alpha'=\ol\alpha$ and $V'=V$.
By the exact sequence~\eqref{15}, the commutative diagram~\eqref{20} and the exact upper row of diagram~\eqref{24},
\begin{align*}
\Ext_R^{k-1}(M,R)&=\ker(\beta)\cong\ker(\alpha')=\ker(\ol\alpha)=M^I/F^I,\\
\Ext^k_R(M,R)&=\coker(\beta)\cong\coker(\nu')=W^\vee/\nu'(V').
\end{align*}
In particular $\Ext^k_R(M,R)=0$ if and only if $\nu'$ identifies $V'=W^\vee$ or, equivalently, if $\nu$ identifies $V=W^{\ol\vee}$.
The particular claim now follows with Corollary~\ref{43}.
\end{proof}

%%%%%%%%%%%%%%%%%%%%%%%%%%%%%%%%%%%%%%%%%%%%%%%%%%%%%%%%%%%%%%%%%%%%%%%%%%%%%%%
\subsection{Projective dimension and residual depth}\label{23}
%%%%%%%%%%%%%%%%%%%%%%%%%%%%%%%%%%%%%%%%%%%%%%%%%%%%%%%%%%%%%%%%%%%%%%%%%%%%%%%

Assume that $R$ is Gorenstein.
Then every finitely generated $R$-module $M$ has finite Gorenstein dimension $\Gdim(M)<\infty$ (see \cite[Thm.~17]{Mas00}).
Recall that if $M$ has finite projective dimension $\pdim(M)<\infty$, then $\Gdim(M)=\pdim(M)$ (see \cite[Cor.~21]{Mas00}).
Consider an $I$-free approximation~\eqref{5} of an $R$-module $M$.
In the following we relate the case of minimal Gorenstein dimension of $M$ to Cohen--Macaulayness of $V$, proving our main result.

%%%%%%%%%%%%%%%%%%%%%%%%%%%%%%%%%%%%%%%%%%%%%%%%%%%%%%%%%%%%%%%%%%%%%%%%%%%%%%%

\begin{lem}\label{45}
Assume that $R$ is Gorenstein and consider an $I$-free approximation~\eqref{5} with $W\ne0$.
Then $W$ is a maximal Cohen--Macaulay $\ol R$-module if and only if $\Gdim(M)\le k$.
In this case $\Gdim(M)\le k-1$ if and only if $\Ext^k_R(M,R)=0$.
If $\ol R$ is Gorenstein, then $\Gdim(M)\ge k-1$ unless $\ol\alpha$ in diagram~\eqref{24} is injective.
\end{lem}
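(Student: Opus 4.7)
The plan is to dualize \eqref{5} with $\Hom_R(-,R)$ and read off the Gorenstein dimensions of $M$ and $W$ from the resulting long exact $\Ext$-sequence. Since $R$ is Gorenstein one has $\omega_R\cong R$ and, for any nonzero finitely generated $R$-module $N$, the characterization $\Gdim_R N=\sup\set{i\ge 0\xmid\Ext^i_R(N,R)\ne 0}$ holds; in particular, vanishing of higher $\Ext$ against $R$ will directly detect Gorenstein dimension.

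First I would establish the isomorphisms $\Ext^i_R(W,R)\cong\Ext^i_R(M,R)$ for every $i>k$. The vanishing $\Ext^i_R(IF,R)=0$ for $i\notin\set{0,k-1}$ is already available: it is the computation preceding \eqref{15}, specialized to $\omega_R\cong R$. Splicing it into the long exact sequence of $\Hom_R(-,R)$ applied to \eqref{5} yields both the isomorphisms for $i>k$ and, in degrees $k-1$ and $k$, precisely the exact fragment \eqref{15} (with $\omega_R$ replaced by $R$).

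For the first claim, I would combine $\depth_R W=\depth_{\ol R}W\le\dim\ol R=n-k$ (which holds since $W\ne 0$ is an $\ol R$-module) with the Auslander--Bridger equality $\Gdim_R W=n-\depth_R W$ to deduce $\Gdim_R W\ge k$, with equality precisely when $W$ is maximal Cohen--Macaulay over $\ol R$; via the $\Ext/\Gdim$ dictionary this reads $\Ext^i_R(W,R)=0$ for $i>k$, which by the isomorphisms just established is equivalent to $\Gdim M\le k$. For the second claim, the assumption $\Gdim M\le k$ leaves at most $\Ext^k_R(M,R)$ possibly non-vanishing in degrees $\ge k$, so $\Gdim M\le k-1$ is equivalent to the single additional vanishing $\Ext^k_R(M,R)=0$. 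For the third claim, when $\ol R$ is Gorenstein, Corollary~\ref{49} identifies $\Ext^{k-1}_R(M,R)\cong\ker(\ol\alpha)$, so any failure of injectivity of $\ol\alpha$ forces $\Ext^{k-1}_R(M,R)\ne 0$ and hence $\Gdim M\ge k-1$.

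No step presents a genuine obstacle. The argument is essentially bookkeeping against the Gorenstein $\Ext/\Gdim$ dictionary, with the final claim reduced outright to the identification in Corollary~\ref{49}.
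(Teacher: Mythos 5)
Your proof is correct, and it takes a genuinely different route to the main equivalence than the paper does. The paper never dualizes \eqref{5} for this step: it derives $\depth(IF)=\depth(I)=n-k+1$ from the Depth Lemma applied to \eqref{4} (this is \eqref{38}), then applies the Depth Lemma directly to \eqref{5} together with the Auslander--Bridger formula \eqref{30} \emph{for $M$} --- in one direction $\depth(W)=n-k$ forces $\depth(M)\ge n-k$, hence $\Gdim(M)\le k$; in the other, $\depth(M)\ge n-k$ forces $\depth(W)\ge n-k$, which is sandwiched against $\dim(W)\le\dim(\ol R)=n-k$. You instead dualize \eqref{5} into $R$, use $\Ext^i_R(IF,R)=0$ for $i\ge k$ (the computation preceding \eqref{15}, with $\omega_R\cong R$) to obtain $\Ext^i_R(W,R)\cong\Ext^i_R(M,R)$ for $i>k$, and apply Auslander--Bridger \emph{to $W$} rather than to $M$; the equivalence then falls out of the characterization \eqref{62} applied to both $W$ and $M$. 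The two mechanisms are dual to each other and rest on the same inputs (Auslander--Bridger plus the Gorenstein $\Ext$/$\Gdim$ dictionary), but yours buys uniformity: both directions of the first claim come from a single chain of equivalences, and all three claims of the lemma are then statements about $\Ext$-vanishing, which meshes seamlessly with the second claim and with the use of Corollary~\ref{49} for the third. The paper's depth-chasing is marginally more elementary for the first claim, needing only depth bounds on a short exact sequence rather than the long exact $\Ext$-sequence. Your treatment of the second and third claims coincides with the paper's.
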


\begin{proof}
By hypothesis $M\ne0$ is finitely generated over the Gorenstein ring $R$.
It follows that (see \cite[Thm.~17, Lem.~23.(c)]{Mas00}) 
\begin{equation}\label{62}
\Gdim(M)=\max\set{i\in\NN\xmid \Ext^i_R(M,R)\ne0}<\infty.
\end{equation}
The Auslander--Bridger Formula (see \cite[Thm.~29]{Mas00}) then states that 
\begin{equation}\label{30}
\depth(M)=\depth(R)-\Gdim(M)=\dim(R)-\Gdim(M)=n-\Gdim(M).
\end{equation}
By the Depth Lemma (see \cite[Prop.~1.2.9]{BH93}) applied to the short exact sequence~\eqref{4}
\begin{align*}
n-k+1=\depth(\ol R)+1
&\ge\min\set{\depth(R),\depth(I)-1}+1=\depth(I)\\
&\ge\min\set{\depth(R),\depth(\ol R)+1}=n-k+1
\end{align*}
and hence
\begin{equation}\label{38}
\depth(IF)=\depth(I)=n-k+1.
\end{equation}

\begin{asparaenum}

\item[($\implies$)] 
Using \eqref{38} and \eqref{30} the Depth Lemma applied to the short exact sequence \eqref{5} gives
\[
\Gdim(M)=n-\depth(M)\le n-\min\set{\depth(IF),\depth(W)}\le n-(n-k)=k.
\]

\item[($\impliedby$)] Using \eqref{30} and \eqref{38} the Depth Lemma applied to the short exact sequence~\eqref{5} gives 
\[
n-k=\dim(\ol R)\ge\dim(W)\ge\depth(W)\ge\min\set{\depth(M),\depth(IF)-1}\ge n-k.
\]

\end{asparaenum}

By \eqref{62} this latter inequality becomes $\Gdim(M)\le k-1$ if and only if $\Ext^k_R(M,R)=0$ (see \cite[Lem.~23.(c)]{Mas00}).

If $\ol R$ is Gorenstein and $\ol\alpha$ is not injective, then $\Ext^{k-1}_R(M,R)\ne0$ by Corollary~\ref{49} and hence $\Gdim(M)\ge k-1$ by \eqref{62}.
\end{proof}

%%%%%%%%%%%%%%%%%%%%%%%%%%%%%%%%%%%%%%%%%%%%%%%%%%%%%%%%%%%%%%%%%%%%%%%%%%%%%%%

We can now conclude the proof of our main result.

\begin{proof}[Proof of Theorem~\ref{60}]
Since $M$ is $I$-reflexive, $W=V^{\ol\vee}$ by Proposition \ref{58}.

\begin{asparaenum}

\item[($\implies$)] Suppose that $V$ is maximal Cohen--Macaulay.
Then also $W$ is maximal Cohen--Macaulay and $V$ is ($\ol\omega_R$-)reflexive (see \cite[Prop.~3.3.3.(b).(ii), Thm.~3.3.10.(d).(iii)]{BH93}).
By Corollary~\ref{49} $\Ext^k_R(M,R)=0$ and by Lemma~\ref{45} $\Gdim(M)=k-1$.

\item[($\impliedby$)] Suppose that $\Gdim(M)\le k-1$.
By Lemma~\ref{45} $W$ is maximal Cohen--Macaulay and $\Ext^k(M,R)=0$.
By Corollary~\ref{49} $V=W^{\ol\vee}$ is ($\ol\omega_R$-)reflexive and maximal Cohen--Macaulay (see \cite[Prop.~3.3.3.(b).(ii)]{BH93}).

\end{asparaenum}

The last claim is due to Lemma~\ref{45}.
\end{proof}

%%%%%%%%%%%%%%%%%%%%%%%%%%%%%%%%%%%%%%%%%%%%%%%%%%%%%%%%%%%%%%%%%%%%%%%%%%%%%%%
\subsection{Restricted \texorpdfstring{$I$}{I}-free approximation}\label{65}
%%%%%%%%%%%%%%%%%%%%%%%%%%%%%%%%%%%%%%%%%%%%%%%%%%%%%%%%%%%%%%%%%%%%%%%%%%%%%%%

In this subsection we describe a construction that reduces the support of an $I$-free approximation~\eqref{5} and preserves $I$-reflexivity of $M$ under suitable hypotheses.
In \S\ref{83} this will be related to the definition of multi-logarithmic differential forms and residues along Cohen--Macaulay spaces (see \cite[\S10]{Ale14} and \cite[Ch.~4]{Pol16}).

Fix an ideal $J\unlhd R$ with $I\subseteq J$ and set $S:=\ol R$ and $T:=R/J$.
By hypothesis $S$ is Cohen--Macaulay and hence (see \cite[Prop.1.2.13]{BH93})
\begin{equation}\label{82}
\Ass(S)=\Min\Spec(S).
\end{equation}

%%%%%%%%%%%%%%%%%%%%%%%%%%%%%%%%%%%%%%%%%%%%%%%%%%%%%%%%%%%%%%%%%%%%%%%%%%%%%%%

\begin{lem}\label{46}
There is an inclusion
\[
\Supp_S(T)\cap\Ass(S)\subseteq\Ass_S(T).
\]
In particular, equality in $\Hom_S(N,S)$ for any $T$-module $N$, or in $\Hom_S(N,T)$ for any $S$-module $N$, can be checked at $\Ass_S(T)$.
\end{lem}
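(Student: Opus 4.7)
My plan is to first dispatch the inclusion by localizing at a minimal prime, then use it to reduce checking vanishing of homomorphisms to the associated primes of $T$.

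For the inclusion, let $\pp\in\Supp_S(T)\cap\Ass(S)$. By~\eqref{82} we have $\pp\in\Min\Spec(S)$, so $S_\pp$ is a zero-dimensional local ring whose maximal ideal $\pp S_\pp$ consists of nilpotents. Since $\pp\in\Supp_S(T)$, the finitely generated nonzero $S_\pp$-module $T_\pp$ has nonzero socle, so there is $0\ne t\in T_\pp$ with $\pp S_\pp\cdot t=0$; necessarily $\Ann_{S_\pp}(t)=\pp S_\pp$. Thus $\pp S_\pp\in\Ass_{S_\pp}(T_\pp)$ and hence $\pp\in\Ass_S(T)$, where I use that $\Ass$ commutes with localization for finitely generated modules, and $T=S/(J/I)$ is finitely generated over $S$ because $I\subseteq J$.

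For the ``in particular'' part, I will apply the inclusion to the image of a given homomorphism. Suppose $\varphi\in\Hom_S(N,S)$ with $N$ a $T$-module. Then $M:=\varphi(N)$ is a quotient of $N$, hence a $T$-module, so $\Supp_S(M)\subseteq V(J)=\Supp_S(T)$. On the other hand $M\subseteq S$ gives $\Ass_S(M)\subseteq\Ass(S)$. Combining and invoking the inclusion just proved, $\Ass_S(M)\subseteq\Ass(S)\cap\Supp_S(T)\subseteq\Ass_S(T)$. Likewise, if $\varphi\in\Hom_S(N,T)$ with $N$ an $S$-module, then $M:=\varphi(N)\subseteq T$ yields directly $\Ass_S(M)\subseteq\Ass_S(T)$. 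In both cases, if $\varphi_\pp=0$ for every $\pp\in\Ass_S(T)$, then $M_\pp=0$ for every $\pp\in\Ass_S(M)$, which forces $M=0$ and hence $\varphi=0$. Applying this to the difference of two homomorphisms yields the stated equality check.

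The only mild subtlety is the socle argument at the minimal prime, which requires $T$ to be finitely generated over $S$ so that Nakayama applies to $T_\pp$; this is immediate from $I\subseteq J$. The rest is a routine ``check at associated primes'' argument packaged through the inclusion.
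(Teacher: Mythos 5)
Your proof is correct, but it routes the second assertion differently from the paper. For the inclusion, the paper simply combines \eqref{82} with the standard fact $\Min\Supp_S(T)\subseteq\Ass_S(T)$; your socle argument at a minimal prime is in effect a self-contained proof of that fact (and note the finite generation of $T_\pp$ you worry about is not really needed: since $\pp S_\pp$ is nilpotent in the zero-dimensional Noetherian local ring $S_\pp$, any nonzero module over it has nonzero socle). The real divergence is in the \enquote{in particular} part: the paper controls the associated primes of the entire Hom module via the formula $\Ass_S(\Hom_S(N,S))=\Supp_S(N)\cap\Ass(S)$ (\cite[Exe.~1.2.27]{BH93}), and then checks vanishing of elements at $\Ass_S(\Hom_S(N,S))\subseteq\Ass_S(T)$; you instead take a single homomorphism $\varphi$, pass to its image $M=\varphi(N)$, and bound $\Ass_S(M)$ by elementary containments ($M$ is a $T$-module inside $S$ in the first case, a submodule of $T$ in the second), concluding $M=0$ from vanishing at $\Ass_S(M)$. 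Your route avoids the Bruns--Herzog exercise entirely and treats both Hom cases uniformly, at the cost of arguing map-by-map; the paper's route is shorter given the cited formula and yields the stronger, more structural statement that the whole module $\Hom_S(N,S)$ has all its associated primes in $\Ass_S(T)$. One cosmetic point: your $V(J)$ should really be $V(J/I)\subseteq\Spec(S)$ (or $\Supp_S(T)$), since $J$ is an ideal of $R$, not of $S$.
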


\begin{proof}
The inclusion follows from \eqref{82} and $\Min\Supp_S(T)\subseteq\Ass_S(T)$.
For any $T$-module $N$ (see \cite[Exe.~1.2.27]{BH93})
\[
\Ass_S(\Hom_S(N,S))=\Supp_S(N)\cap\Ass(S)\subseteq\Supp_S(T)\cap\Ass(S)\subseteq\Ass_S(T)
\]
and the first particular claim follows, the second holds for a similar reason.
\end{proof}

%%%%%%%%%%%%%%%%%%%%%%%%%%%%%%%%%%%%%%%%%%%%%%%%%%%%%%%%%%%%%%%%%%%%%%%%%%%%%%%

\begin{dfn}\label{39}
For any $S$-module $N$ we consider the submodule supported on $V(J)$
\[
N_T:=\Hom_S(T,N)=\Ann_N(J)\subseteq N.
\]
For an $I$-free approximation~\eqref{5} its \emph{$J$-restriction} is the $I$-free approximation
\begin{equation}\label{108}
\xymatrix{
0\ar[r] & IF\ar[r]^-{\iota_J} & M_J\ar[r]^-{\rho_T} & W_T\ar[r] & 0
}
\end{equation}
defined as its image under the map $\Ext^1_R(W,IF)\to\Ext^1_R(W_T,IF)$.
\end{dfn}

%%%%%%%%%%%%%%%%%%%%%%%%%%%%%%%%%%%%%%%%%%%%%%%%%%%%%%%%%%%%%%%%%%%%%%%%%%%%%%%

In explicit terms it is the source of a morphism of $I$-free approximations
\begin{equation}\label{40}
\xymatrix{
0\ar[r] & IF\ar[r]^-\iota & M\ar[r]^-\rho & W\ar[r] & 0 \\
0\ar[r] & IF\ar@{=}[u]\ar[r]^-{\iota_J} & M_J\ar@{^{(}->}[u]\ar[r]^-{\rho_T} & W_T\ar@{^{(}->}[u]\ar[r] & 0. 
}	
\end{equation}
The right square is obtained as the pull-back of $\rho$ and $W_T\into W$, whose universal property applied to $\iota$ and $0\colon IF\to W_T$ gives the left square.
The analogue of $\kappa$ in \eqref{6} for the $J$-restriction~\eqref{108} is the composition
\begin{equation}\label{81}
\xymatrix{
\kappa_J\colon M_J=IF:_MJ\subseteq M\ar[r]^-\kappa & F.
}
\end{equation}

%%%%%%%%%%%%%%%%%%%%%%%%%%%%%%%%%%%%%%%%%%%%%%%%%%%%%%%%%%%%%%%%%%%%%%%%%%%%%%%

By Lemma \ref{7} and the Snake Lemma, applying $-^I$ to \eqref{40} yields (see Definition~\ref{9})
\begin{equation}\label{41}
\xymatrix{
0 & V\ar[l]\ar@{->>}[d] & F^\vee\ar[l]_-{\alpha}\ar@{=}[d] & M^I\ar[l]_-{\lambda}\ar@{^{(}->}[d] & 0\ar[l]\\
0 & V^T\ar[l] & F^\vee\ar[l]_{\alpha^T} & M_J^I\ar[l]_{\lambda^J} & 0\ar[l]
}
\end{equation}
where the bottom row 
\begin{equation}\label{109}
\xymatrix{
0 & V^T\ar[l] & F^\vee\ar[l]_{\alpha^T} & M_J^I\ar[l]_{\lambda^J} & 0\ar[l]
}
\end{equation}
is the $I$-dual~\eqref{37} of the $J$-restriction~\eqref{108}.
In diagram~\eqref{41}, we denote
\begin{equation}\label{89}
U:=\ker(V\onto V^T).
\end{equation}

%%%%%%%%%%%%%%%%%%%%%%%%%%%%%%%%%%%%%%%%%%%%%%%%%%%%%%%%%%%%%%%%%%%%%%%%%%%%%%%

The $J$-restriction behaves well under the following hypothesis on $T$.
\begin{equation}\label{67}
T_\pp=
\begin{cases}
S_\pp & \text{ if } \pp\in\Ass_S(T),\\
0 & \text{ if } \pp\in\Ass(S)\setminus\Ass_S(T).
\end{cases}
\end{equation}
This is due to the following 

\begin{rmk}\label{48}
Our constructions commute with localization.
As special cases of the $J$-restriction and its $I$-dual we record
\[
(\iota_J,\rho_T)=
\begin{cases}
(\iota,\rho) & \text{ if } T=S,\\
(\id_{IF},0) & \text{ if } T=0,
\end{cases}
\quad
(\lambda^J,\alpha^T)=
\begin{cases}
(\lambda,\alpha) & \text{ if } T=S,\\
(\id_{F^\vee},0) & \text{ if } T=0.
\end{cases}
\]
Localizing \eqref{40} and \eqref{41} at the image of $\pp\in\Ass(S)$ under the map $\Spec(S)\to\Spec(R)$ yields these special cases under hypothesis~\eqref{67}.
\end{rmk}

%%%%%%%%%%%%%%%%%%%%%%%%%%%%%%%%%%%%%%%%%%%%%%%%%%%%%%%%%%%%%%%%%%%%%%%%%%%%%%%

In the setup of our applications in \S\ref{51} condition~\eqref{67} holds true due to the following

\begin{lem}\label{91}
If $S$ is reduced and $T$ is unmixed with $\dim(T)=\dim(S)$, then condition~\eqref{67} holds and $\Ass_S(T)\subseteq\Ass(S)$.
\end{lem}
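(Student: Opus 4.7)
The plan is to prove the two assertions in sequence, first the inclusion $\Ass_S(T)\subseteq\Ass(S)$, then condition~\eqref{67}. Both are essentially dimension-counting arguments using equidimensionality of the Cohen--Macaulay ring $S$ together with the reduced hypothesis.

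For the inclusion, I would take $\pp\in\Ass_S(T)$. Since $T$ is unmixed with $\dim(T)=\dim(S)$ and $\pp$ is an associated prime of $T$ viewed as an $S$-module, we have $\dim(S/\pp)=\dim(T)=\dim(S)$. Because $S$ is Cohen--Macaulay it is equidimensional, so the only primes of $S$ with full coheight are the minimal ones. Hence $\pp\in\Min\Spec(S)=\Ass(S)$ using \eqref{82}.

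For condition \eqref{67}, consider first $\pp\in\Ass_S(T)$. By the previous paragraph $\pp\in\Ass(S)$, and since $S$ is reduced the localization $S_\pp$ is a field. The surjection $S\twoheadrightarrow T$ localizes to a surjection $S_\pp\twoheadrightarrow T_\pp$ from a field, so $T_\pp$ is either $0$ or $S_\pp$; the possibility $T_\pp=0$ is ruled out by $\pp\in\Ass_S(T)\subseteq\Supp_S(T)$, giving $T_\pp=S_\pp$. Now consider $\pp\in\Ass(S)\setminus\Ass_S(T)$ and suppose for contradiction that $T_\pp\neq0$, i.e.\ $\pp\in\Supp_S(T)$. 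Since $\pp\in\Min\Spec(S)$, there is no prime of $S$ strictly contained in $\pp$, so $\pp$ is already minimal in $\Supp_S(T)$. But $\Min\Supp_S(T)\subseteq\Ass_S(T)$, contradicting $\pp\notin\Ass_S(T)$. Hence $T_\pp=0$.

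The arguments are short and mainly bookkeeping; the only point requiring care is interpreting ``$T$ is unmixed with $\dim(T)=\dim(S)$'' as the condition that every associated prime $\pp$ of the $S$-module $T$ satisfies $\dim(S/\pp)=\dim(T)$, which is what lets equidimensionality of the Cohen--Macaulay $S$ force $\pp$ to be minimal. I do not expect a real obstacle beyond keeping the module- versus ring-theoretic notions of associated primes straight.
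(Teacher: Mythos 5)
Your proof is correct and follows essentially the same route as the paper: the dimension count forcing $\Ass_S(T)\subseteq\Min\Spec(S)=\Ass(S)$, then the observation that $S_\pp$ is a field at each $\pp\in\Ass(S)$ (by reducedness) whose factor ring $T_\pp$ is either everything or zero, with the two cases of condition~\eqref{67} settled exactly as in the paper via $\Min\Supp_S(T)\subseteq\Ass_S(T)$. The only cosmetic difference is your appeal to equidimensionality of the Cohen--Macaulay ring $S$, which is not actually needed: in any local ring a prime $\pp$ with $\dim(S/\pp)=\dim(S)$ is automatically minimal.
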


\begin{proof}
By hypothesis on $T$ and \eqref{82} 
\begin{equation}\label{85}
\Ass_S(T)=\Min\Supp_S(T)\subseteq\Min\Spec(S)=\Ass(S).
\end{equation}
By hypothesis on $S$, for any $\pp\in\Ass(S)$, $S_\pp$ is a field with factor ring $T_\pp$.
If $\pp\in\Ass_S(T)$, then $T_\pp\ne0$ and hence $T_\pp=S_\pp$.
Otherwise, $\pp\not\in\Supp_S(T)$ by \eqref{85} and hence $T_\pp=0$.
\end{proof}

%%%%%%%%%%%%%%%%%%%%%%%%%%%%%%%%%%%%%%%%%%%%%%%%%%%%%%%%%%%%%%%%%%%%%%%%%%%%%%%

\begin{lem}\label{84}
Assume that $R$ is Gorenstein and consider the $J$-restriction~\eqref{108} of an $I$-free approximation. 
If $T$ satisfies condition~\eqref{67}, then for $U$ as defined in \eqref{89}
\[
\alpha^{-1}(U)=\set{\varphi\in F^\vee\xmid\varphi\circ\kappa(M)\subseteq J\omega_R}.
\]
In particular, $JV\subseteq U$.
\end{lem}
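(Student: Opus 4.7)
The plan is to identify $\alpha^{-1}(U)$ with a concrete submodule of $F^\vee$, reformulate the two conditions in terms of $S$-linear maps, and test them locally at $\Ass_S(T)$ via Lemma~\ref{46}. From diagram~\eqref{41}, since $\psi|_{M_J}$ restricts like $\psi$ to $IF$ for $\psi\in M^I$, the surjection $V\onto V^T$ is induced by the inclusion $\lambda(M^I)\subseteq \lambda^J(M_J^I)$, giving $\alpha^{-1}(U)=\lambda^J(M_J^I)$. Repeating the argument of \eqref{105} for the $J$-restriction, using that $\omega_R$ is torsion-free while $W_T$ is $R$-torsion, the unique extension of $\varphi|_{IF}$ to $M_J$ is $\varphi\circ\kappa_J$, so $\varphi\in\lambda^J(M_J^I)$ exactly when $\varphi\circ\kappa(M_J)\subseteq I\omega_R$.

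Setting $\phi:=\varphi\circ\kappa\colon M\to\omega_R$, the inclusion $\phi(IF)=I\varphi(F)\subseteq I\omega_R$ induces $\bar\phi\colon W\to\ol\omega_R$. Then $\phi(M_J)\subseteq I\omega_R$ is equivalent to $\bar\phi|_{W_T}=0$ (since $W_T=M_J/IF$), while $\phi(M)\subseteq J\omega_R$ is equivalent to vanishing of $\tilde\phi:=\pi\circ\bar\phi\colon W\to\ol\omega_R/J\ol\omega_R$, with $\pi$ the canonical quotient. Using the Gorenstein hypothesis on $R$ to identify $\omega_R=R$, one obtains $\ol\omega_R=S$ and $\ol\omega_R/J\ol\omega_R=T$, so that $\bar\phi|_{W_T}\in\Hom_S(W_T,S)$ with $W_T$ a $T$-module and $\tilde\phi\in\Hom_S(W,T)$ with $W$ an $S$-module, placing both conditions into the situations covered by Lemma~\ref{46}.

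By that lemma each vanishing can be tested after localization at primes $\pp\in\Ass_S(T)$. At such a $\pp$, condition~\eqref{67} forces $T_\pp=S_\pp$, hence $J_\pp=I_\pp$, so $\pi_\pp$ is the identity and $(W_T)_\pp=\Ann_{W_\pp}(J_\pp)=\Ann_{W_\pp}(I_\pp)=W_\pp$ because $I$ annihilates the $S$-module $W$. Both conditions therefore collapse to $\bar\phi_\pp=0$, and are thus equivalent. The supplementary statement $JV\subseteq U$ is then immediate: $(x\varphi)\circ\kappa(M)=x\,\varphi(\kappa(M))\subseteq J\omega_R$ trivially for $x\in J$, whence $JF^\vee\subseteq\alpha^{-1}(U)$ and $JV=\alpha(JF^\vee)\subseteq U$.

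The main conceptual hurdle is recognizing that the asymmetric pair ``restriction vanishes'' versus ``image is contained in $J\ol\omega_R$'' can be repackaged into the symmetric Hom-theoretic form $\Hom_S(-,S)$ versus $\Hom_S(-,T)$ covered by Lemma~\ref{46}; once that repackaging is in place, condition~\eqref{67} reduces the comparison at each relevant prime to a tautology, and the Gorenstein identification $\omega_R=R$ is what makes the targets compatible.
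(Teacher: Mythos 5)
Your proof is correct and takes essentially the same approach as the paper: your maps $\bar\phi$, $\bar\phi|_{W_T}$ and $\tilde\phi$ coincide with the maps $\ol\varphi$, $\ol\varphi_T$ and $\psi$ of the paper's diagram~\eqref{118}, and the decisive step---testing both vanishing conditions at $\Ass_S(T)$ via the two cases of Lemma~\ref{46}, where condition~\eqref{67} collapses them to the same local condition---is identical. The only cosmetic difference is that you re-derive the analogue of \eqref{105} for the $J$-restriction~\eqref{108}, where the paper instead invokes Lemma~\ref{36} applied to \eqref{108}.
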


\begin{proof}
Let $\varphi\in F^\vee$ and denote by $\ol\varphi_T$ the map $\ol\varphi$ in diagram~\eqref{17} for the $J$-restriction~\eqref{108}.
Consider the map $\psi$ defined by the commutative diagram
\begin{equation}\label{118}
\xymatrix{
W\ar[r]^-\psi\ar[dr]^-{\ol\varphi} & T\otimes_R\omega_R\\
W_T\ar[r]^-{\ol\varphi_T}\ar@{^(->}[u] & S\otimes_R\omega_R.\ar@{->>}[u]
}
\end{equation}
By Lemma~\ref{46} and since $\omega_R\cong R$ both $\ol\varphi_T=0$ and $\psi=0$ can be checked at $\Ass_S(T)$.
There the vertical maps in diagram~\eqref{118} induce the identity by condition~\eqref{67} and Remark~\ref{48}.
With diagram~\eqref{41}, Lemma~\ref{36} applied to \eqref{108} and diagram~\eqref{17} it follows that
\[
\alpha(\varphi)\in U
\iff\alpha^T(\varphi)=0
\iff\ol\varphi_T=0
\iff\psi=0
\iff\varphi\circ\kappa(M)\subseteq J\omega_R.
\]
This proves the equality and the inclusion follows with $JV=J\alpha(F^\vee)=\alpha(JF^\vee)$.
\end{proof}

%%%%%%%%%%%%%%%%%%%%%%%%%%%%%%%%%%%%%%%%%%%%%%%%%%%%%%%%%%%%%%%%%%%%%%%%%%%%%%%

\begin{prp}\label{72}
Assume that $R$ is Gorenstein and consider the $J$-restriction~\eqref{108} of an $I$-free approximation.
If $T$ satisfies condition~\eqref{67}, then with $M$ also $M_J$ is $I$-reflexive.
\end{prp}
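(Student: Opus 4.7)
The plan is to apply Proposition~\ref{58} to the $J$-restriction~\eqref{108}, reducing the claim to showing that the canonical map $\nu_J^{\ol\vee}\circ\delta_{W_T,\ol\omega_R}\colon W_T\to(V^T)^{\ol\vee}$ is an isomorphism, where $\nu_J$ denotes the map from Lemma~\ref{36} applied to~\eqref{108}. Since $M$ is $I$-reflexive, Proposition~\ref{58} already provides an identification $W=V^{\ol\vee}$. Dualizing the surjection $V\onto V^T$ from diagram~\eqref{41} embeds $(V^T)^{\ol\vee}$ into $V^{\ol\vee}=W$ as the set of maps $V\to\ol\omega_R$ vanishing on $U$. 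The approach is to identify this submodule with $W_T=\Ann_W(J)\subseteq W$ and to check that the resulting identification agrees with the Proposition~\ref{58} map for~\eqref{108}.

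First I would dispatch the inclusion $(V^T)^{\ol\vee}\subseteq W_T$: it is immediate from Lemma~\ref{84}, which supplies $JV\subseteq U$, so that any $\tilde w\in V^{\ol\vee}$ vanishing on $U$ is automatically killed by $J$. This is the only place where hypothesis~\eqref{67} enters, via its use in Lemma~\ref{84}.

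Next I would handle the reverse inclusion $W_T\subseteq(V^T)^{\ol\vee}$ by a direct computation. Given $w\in W_T$, choose a lift $m\in M_J$ via $\rho_T$, and represent an arbitrary $u\in U$ as $\alpha(\varphi)$ with $\varphi=\lambda^J(\psi)$ for some $\psi\in M_J^I$. The key observation is that $\psi$ and $\varphi\circ\kappa_J$ are two maps $M_J\to\omega_R$ that coincide on $IF$ by the triangle~\eqref{6} applied to~\eqref{108}, so their difference factors through the $\ol R$-module $M_J/IF=W_T$ and must vanish by Lemma~\ref{7}. Combined with the formula of Lemma~\ref{36} for $M$, this forces the $\tilde w\in V^{\ol\vee}$ corresponding to $w$ to satisfy $\tilde w(u)=\pi_\omega(\psi(m))=0$, since $\psi(m)\in I\omega_R$.

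Finally, I would check that the identification just constructed is indeed $\nu_J^{\ol\vee}\circ\delta_{W_T,\ol\omega_R}$. This amounts to a naturality comparison between Lemma~\ref{36} for $M$ and for~\eqref{108}, using $\kappa_J=\kappa\vert_{M_J}$ and the fact that a lift of $w\in W_T$ to $M$ can be chosen inside $M_J$. I expect the main obstacle to be this bookkeeping and the verification of the identity $\psi=\varphi\circ\kappa_J$; the remaining content then reduces to the elementary inclusion supplied by Lemma~\ref{84}.
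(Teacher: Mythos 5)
Your proposal is correct, but its technical core is genuinely different from the paper's proof. Both arguments funnel through Proposition~\ref{58} applied to the $J$-restriction~\eqref{108}, and both invoke Lemma~\ref{84}; the difference is how the identification $W_T=(V^T)^{\ol\vee}$ is produced. The paper never argues element-wise: from $JV\subseteq U$ it forms the exact sequence $0\to U/JV\to V/JV\to V^T\to 0$, localizes at $\Ass(S)$, where condition~\eqref{67} together with Remark~\ref{48} forces $(U/JV)_\pp=0$, and applies Ischebeck's Lemma to the small-dimensional module $U/JV$ to get $(V^T)^{\ol\vee}=(V/JV)^{\ol\vee}=(V^{\ol\vee})_T$; it then compares the Proposition~\ref{58} map for \eqref{108} with the $T$-restriction of the one for $M$ by a second localization at $\Ass_S(T)$ (diagram~\eqref{78}, Lemma~\ref{46}). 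You replace both localization steps by direct computations inside $V^{\ol\vee}$: your key identity $\psi=\varphi\circ\kappa_J$ for $\varphi=\lambda^J(\psi)$ is exactly the restriction analogue of \eqref{105} (Ischebeck applied to the $\ol R$-module $W_T$), and it yields the evaluation $\nu(\alpha(\varphi))(w)=\pi_\omega(\psi(m))=0$ for $m\in M_J$ lifting $w$, so that functionals coming from $W_T$ kill \emph{all} of $U$, not merely $JV$; the forward inclusion then only needs $JV\subseteq U$. Your final naturality step does go through as described: $\ol\varphi_T=\ol\varphi\vert_{W_T}$ follows from $\kappa_J=\kappa\vert_{M_J}$, diagram~\eqref{40} and surjectivity of $\rho_T$, and combined with naturality of the evaluation maps $\delta$ this identifies your isomorphism with $\nu_T^{\ol\vee}\circ\delta_{W_T,\ol\omega_R}$, so Proposition~\ref{58} finishes the proof. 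What your route buys: the dimension estimate on $U/JV$ is never needed, and hypothesis~\eqref{67} is isolated in the single inclusion $JV\subseteq U$ — which in fact can be verified directly (for $x\in J$ and $\varphi\in F^\vee$ one checks $(x\varphi)\circ\kappa_J\in M_J^I$ and $\lambda^J((x\varphi)\circ\kappa_J)=x\varphi$), so your argument suggests the statement holds under weaker hypotheses. What the paper's route buys: the useful intermediate identity $(V^T)^{\ol\vee}=(V^{\ol\vee})_T$, and a uniform ``check at associated primes'' mechanism that matches the style of the surrounding subsection.
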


%%%%%%%%%%%%%%%%%%%%%%%%%%%%%%%%%%%%%%%%%%%%%%%%%%%%%%%%%%%%%%%%%%%%%%%%%%%%%%%

\begin{proof}
By Lemma~\ref{84} there is a short exact sequence
\begin{equation}\label{119}
0\to U/JV\to V/JV\to V^T\to 0.
\end{equation}
By condition~\eqref{67} and Remark~\ref{48}
\[
JS_\pp =
\begin{cases}
0 & \text{ if }\pp\in\Ass_S(T),\\
S_\pp & \text{ if }\pp\in\Ass(S)\setminus\Ass_S(T),
\end{cases}
\quad
(V\onto V^T)_\pp=
\begin{cases}
\id_{V_\pp} & \text{ if }\pp\in\Ass_S(T),\\
0 & \text{ if }\pp\in\Ass(S)\setminus\Ass_S(T),
\end{cases}
\]
and hence 
\begin{align*}
\forall\pp\in\Ass(S)\colon(JV)_\pp=JS_\pp V_\pp=U_\pp
&\implies(U/JV)_\pp=0\\
&\implies\dim(U/JV)<\dim(S)=\depth(\ol\omega_R).
\end{align*}
Then $(U/JV)^{\ol\vee}=0$ by Ischebeck's Lemma (see \cite[Satz 1.9]{HK71}).
Using sequence~\eqref{119} and Hom-tensor adjunction it follows that
\[
(V^T)^{\ol\vee}=(V/JV)^{\ol\vee}=(T\otimes_SV)^{\ol\vee}=(V^{\ol\vee})_T.
\]
Denote by $\nu_T$ the map $\nu$ from Lemma~\ref{36} applied to the $J$-restriction~\eqref{108}.
We obtain a diagram
\begin{equation}\label{78}
\xymatrix{
W_T\ar[rr]^-{(\nu^{\ol\vee}\circ\delta_{W,\ol\omega_R})_T}\ar@{=}[d] && (V^{\ol\vee})_T \ar@{=}[d]\\
W_T\ar[r]^-{\delta_{W_T,\ol\omega_R}} & (W_T)^{\ol\vee\ol\vee}\ar[r]^-{(\nu_T)^{\ol\vee}} & (V^T)^{\ol\vee}.
}
\end{equation}
By Lemma~\ref{46} and since $\ol\omega_R\cong S$, its commutativity can be checked at $\Ass_S(T)$.
By condition~\eqref{67} and Remark~\ref{48} top and bottom horizontal maps in diagram~\eqref{78} identify at $\Ass_S(T)$.
Diagram~\eqref{78} thus commutes and Proposition~\ref{58} yields the claim.
\end{proof}

%%%%%%%%%%%%%%%%%%%%%%%%%%%%%%%%%%%%%%%%%%%%%%%%%%%%%%%%%%%%%%%%%%%%%%%%%%%%%%%

The Cohen--Macaulay property is invariant under restriction of scalars $S\to T$ and by Hom-tensor adjunction $\Hom_S(-,\omega_S)=\Hom_T(-,\omega_T)$ on $T$-modules where (see \cite[Thm.~3.3.7.(b)]{BH93})
\begin{equation}\label{116}
\omega_T=\Hom_S(T,\omega_S).
\end{equation}
Combining Theorem~\ref{60} and Proposition~\ref{72} yields the following (see diagram~\eqref{41})

\begin{cor}\label{61}
In addition to the hypotheses of Theorem~\ref{60}, let $J\unlhd R$ with $J\subseteq I$ be such that $T=R/J$ satisfies condition~\eqref{67} and $W_T\ne0$.
Consider the $J$-restriction~\eqref{108} with $I$-dual~\eqref{109}.
Then $W_T=\Hom_T(V^T,\omega_T)$ and $V^T$ is a maximal Cohen--Macaulay $T$-module if and only if $\Gdim(M_J)\le k-1$.
In this latter case $V^T=\Hom_T(W_T,\omega_T)$ is $\omega_T$-reflexive.
Unless $T\otimes\alpha^T$ (and hence $\ol\alpha$) is injective $\Gdim(M_J)\ge k-1$.\qed
\end{cor}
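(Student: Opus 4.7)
The plan is to apply Theorem~\ref{60} directly to the $J$-restriction~\eqref{108} and then re-express the resulting $\ol R$-duality in terms of $T$-duality. The preliminary observation is that Proposition~\ref{72} promotes $I$-reflexivity of $M$ to $I$-reflexivity of $M_J$ under condition~\eqref{67}, so all hypotheses of Theorem~\ref{60} are in place: the Gorenstein rings $R,\ol R$ and the $I$-reflexive module $M_J$ with nonzero cokernel $W_T$ in the $J$-restriction together with its $I$-dual~\eqref{109}.

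Applying Theorem~\ref{60} to the $J$-restriction immediately yields $W_T=(V^T)^{\ol\vee}$, the equivalence between $V^T$ being a maximal Cohen--Macaulay $\ol R$-module and $\Gdim(M_J)\le k-1$, the reflexivity $V^T=W_T^{\ol\vee}$ in that case, and the lower bound $\Gdim(M_J)\ge k-1$ unless the suitable reduction of $\alpha^T$ is injective. It then remains to translate $\ol R$-duality into $T$-duality. Lemma~\ref{84} supplies $JV\subseteq U$, so $V^T=V/U$ carries a $T$-module structure, and $W_T$ is a $T$-module by construction. For any $T$-module $N$, using $\ol\omega_R\cong\omega_S$, Hom-tensor adjunction, and \eqref{116}, one computes
\[
N^{\ol\vee}=\Hom_S(N,\omega_S)=\Hom_T\bigl(N,\Hom_S(T,\omega_S)\bigr)=\Hom_T(N,\omega_T),
\]
replacing $(V^T)^{\ol\vee}$ and $W_T^{\ol\vee}$ by the desired $T$-duals. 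Since depth is intrinsic to the module and $\dim(T)=\dim(S)$ in the relevant setting, maximal Cohen--Macaulayness over $\ol R$ and over $T$ coincide for $V^T$.

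The main subtlety I foresee is reconciling the injectivity condition in the final clause. Theorem~\ref{60} applied to $M_J$ would deliver $\Gdim(M_J)\ge k-1$ unless $\ol R\otimes\alpha^T$ is injective, whereas Corollary~\ref{61} invokes $T\otimes\alpha^T$. Because $V^T$ is a $T$-module, $\alpha^T$ factors through $T\otimes_R F^\vee$, and $\ol R\otimes\alpha^T$ decomposes as the surjection $\ol R\otimes F^\vee\onto T\otimes F^\vee$ followed by $T\otimes\alpha^T$, which permits passage between the two formulations. The parenthetical ``(and hence $\ol\alpha$)'' should then follow from the factorization $\alpha^T=(V\onto V^T)\circ\alpha$ visible in diagram~\eqref{41} together with the injection $W_T\into W$ from diagram~\eqref{40}.
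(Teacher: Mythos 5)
Your proposal is correct and is essentially the paper's own (very terse) proof: the paper likewise obtains the corollary by combining Proposition~\ref{72} (which gives $I$-reflexivity of $M_J$) with Theorem~\ref{60} applied to the $J$-restriction~\eqref{108}, and then converts $\ol R$-duality and maximal Cohen--Macaulayness into their $T$-counterparts exactly as you do, via \eqref{116}, Hom-tensor adjunction, and invariance of the Cohen--Macaulay property under restriction of scalars (with $\dim(T)=\dim(\ol R)$ following from condition~\eqref{67} and $W_T\ne0$). One small sharpening: your factorization $\ol R\otimes\alpha^T=(T\otimes\alpha^T)\circ\bigl(\ol R\otimes F^\vee\onto T\otimes F^\vee\bigr)$ gives only a one-way passage --- non-injectivity of $T\otimes\alpha^T$ forces non-injectivity of $\ol R\otimes\alpha^T$, since a surjection pulls a nonzero kernel back to a nonzero kernel --- not an equivalence of the two formulations, but that is precisely the direction the final clause requires.
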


%%%%%%%%%%%%%%%%%%%%%%%%%%%%%%%%%%%%%%%%%%%%%%%%%%%%%%%%%%%%%%%%%%%%%%%%%%%%%%%

Finally we mention a construction analogous to Definition~\ref{39} not used in the sequel.

\begin{rmk}
Assume that $J$ satisfies the hypotheses on $I$ and consider an $I$-free approximation~\eqref{5} where $W$ is already a $T$-module.
Then $W_T=W$ and $M_J=M$ and the image of \eqref{5} under the map $\Ext^1_R(W,IF)\to\Ext^1_R(W,JF)$ is a $J$-free approximation that fits into a commutative diagram with cartesian left square
\[
\xymatrix{
0\ar[r] & JF\ar[r] & M^J\ar[r] & W\ar[r] & 0\\
0\ar[r] & IF\ar[r]\ar@{^{(}->}[u] & M\ar[r]\ar@{^{(}->}[u] & W\ar[r]\ar@{=}[u] & 0
}
\]
where $M^J/M_J\cong JF/IF$.
In particular, $M^J=M_J$ if and only if $I=J$.
\end{rmk}

%%%%%%%%%%%%%%%%%%%%%%%%%%%%%%%%%%%%%%%%%%%%%%%%%%%%%%%%%%%%%%%%%%%%%%%%%%%%%%%
%%%%%%%%%%%%%%%%%%%%%%%%%%%%%%%%%%%%%%%%%%%%%%%%%%%%%%%%%%%%%%%%%%%%%%%%%%%%%%%
\section{Application to logarithmic forms}\label{51}
%%%%%%%%%%%%%%%%%%%%%%%%%%%%%%%%%%%%%%%%%%%%%%%%%%%%%%%%%%%%%%%%%%%%%%%%%%%%%%%
%%%%%%%%%%%%%%%%%%%%%%%%%%%%%%%%%%%%%%%%%%%%%%%%%%%%%%%%%%%%%%%%%%%%%%%%%%%%%%%

In this section results from \S\ref{11} are used to give a more conceptual approach to and to generalize a duality of multi-logarithmic forms found by Pol~\cite{Pol16} as a generalization of result by Granger and the first author~\cite{GS14}.

%%%%%%%%%%%%%%%%%%%%%%%%%%%%%%%%%%%%%%%%%%%%%%%%%%%%%%%%%%%%%%%%%%%%%%%%%%%%%%%

Let $Y$ be a germ of a smooth complex analytic space of dimension $n$. 
Then $Y\cong(\CC^n,0)$ and $\OO_Y\cong\CC\set{x_1,\dots,x_n}$ by a choice of coordinates $x_1,\dots,x_n$ on $Y$. 
We denote by 
\[
\QQ_-:=Q(\OO_-)
\]
the total ring of fractions of $\OO_-$.
In this section we set $-^*:=\Hom_{\OO_Y}(-,\OO_Y)$.

%%%%%%%%%%%%%%%%%%%%%%%%%%%%%%%%%%%%%%%%%%%%%%%%%%%%%%%%%%%%%%%%%%%%%%%%%%%%%%%

Let $\Omega^\bullet_Y$ denote the \emph{De~Rham algebra on $Y$}, that is,
\[
\OO_Y\to\Omega^1_Y,\quad f\mapsto df,
\]
is the universally finite $\CC$-linear derivation of $\OO_Y$ (see \cite[\S2]{SS72} and \cite[\S11]{Kun86}) and $\Omega^q_Y=\bigwedge^q_{\OO_Y}\Omega^1_Y$ for all $q\ge 0$.
In terms of coordinates $\Omega^1_Y\cong\bigoplus_{i=1}^n\OO_Ydx_i$ and hence 
\[
\Omega^q_Y=\bigwedge^q_{\OO_Y}\Omega^1_Y\cong\bigoplus_{i_1<\dots<i_q}\OO_Y dx_{i_1}\wedge\dots\wedge dx_{i_q}
\]
is a free $\OO_Y$-module.
By definition the dual
\[
(\Omega^1_Y)^*=\Der_\CC(\OO_Y)=:\Theta_Y\cong\bigoplus_{i=1}^n\OO_Y\frac{\partial}{\partial x_{i}}
\]
is the module of \emph{$\CC$-linear derivations on $\OO_Y$}, or of vector fields on $Y$.
The module of \emph{$q$-vector fields on $Y$} is then the free $\OO_Y$-module
\[
(\Omega^q_Y)^*=\bigwedge^q_{\OO_Y}\Theta_Y=:\Theta^q_Y\cong\bigoplus_{i_1<\dots<i_q}\OO_Y \frac{\partial}{\partial x_{i_1}}\wedge\dots\wedge \frac{\partial}{\partial x_{i_q}}.
\]

%%%%%%%%%%%%%%%%%%%%%%%%%%%%%%%%%%%%%%%%%%%%%%%%%%%%%%%%%%%%%%%%%%%%%%%%%%%%%%%

\begin{ntn}\label{117}
We set $N:=\set{1,\dots,n}$ and $N^q_<:=\set{\ul j\in N^q\xmid j_1<\dots<j_q}$.
For $\ul j\in N^q$ and $\ul f=(f_1,\dots,f_\ell)\in\OO_Y^\ell$ we abbreviate
\begin{alignat*}{2}
dx_{\ul j}&:=dx_{j_1}\wedge\dots\wedge dx_{j_q},\quad&\frac{\partial}{\partial x_{\ul j}}&:=\frac{\partial}{\partial x_{j_1}}\wedge\dots\wedge\frac{\partial}{\partial x_{j_q}},\\
\ul j_{\hat{i}}&:=(j_1,\dots,\wh{j_i},\dots,j_q),&d\ul f&=df_1\wedge\dots\wedge df_\ell.
\end{alignat*}
\end{ntn}

The perfect pairing
\begin{equation}\label{115}
\Theta_Y^q\times\Omega^q_Y\to\OO_Y,\quad(\delta,\omega)\mapsto\ideal{\delta,\omega},
\end{equation}
then satisfies
\begin{equation}\label{120}
\ideal{\frac{\partial}{\partial x_{\ul j}},dx_{\ul k}}=\delta_{\ul j,\ul k}:=\delta_{j_1,k_1}\cdots\delta_{j_q,k_q}.
\end{equation}

%%%%%%%%%%%%%%%%%%%%%%%%%%%%%%%%%%%%%%%%%%%%%%%%%%%%%%%%%%%%%%%%%%%%%%%%%%%%%%%
\subsection{Log forms along complete intersections}\label{79}
%%%%%%%%%%%%%%%%%%%%%%%%%%%%%%%%%%%%%%%%%%%%%%%%%%%%%%%%%%%%%%%%%%%%%%%%%%%%%%%

Let $C\subseteq Y$ be a reduced complete intersection of codimension $k\ge1$.
Then $\OO_C=\OO_Y/\I_C$ where $\I_C=\I_{C/Y}$ is the ideal of $C\subseteq Y$.
Let $\ul h=(h_1,\dots,h_k)\in\OO_Y^k$ be any regular sequence such that $\I_C=\ideal{h_1,\dots,h_k}$.
Geometrically  $C=D_1\cap\dots\cap D_k$ where $D_i:=\set{h_i=0}$ for $i=1,\dots,k$.

%%%%%%%%%%%%%%%%%%%%%%%%%%%%%%%%%%%%%%%%%%%%%%%%%%%%%%%%%%%%%%%%%%%%%%%%%%%%%%%

\begin{ntn}
We denote $D:=D_1\cup\dots\cup D_k=\set{h=0}$ where $h:=h_1\cdots h_k$,
\begin{alignat*}{2}
-(D)&:=-\otimes_{\OO_Y}\OO_Y\frac1h, & -(-D)&:=-\otimes_{\OO_Y}\OO_Yh,\\
\Sigma=\Sigma_{C/D/Y}&:=\I_C(D)=\sum_{i=1}^k\frac{h_i}{h}\OO_Y\subseteq\QQ_Y, & -^\Sigma&:=\Hom_{\OO_Y}(-,\Sigma).
\end{alignat*}
Note that $\Sigma=\OO_Y$ in case $k=1$.
\end{ntn}

%%%%%%%%%%%%%%%%%%%%%%%%%%%%%%%%%%%%%%%%%%%%%%%%%%%%%%%%%%%%%%%%%%%%%%%%%%%%%%%

The following definition due to Aleksandrov (see \cite[\S3]{Ale12} and \cite[Def.~3.1.4]{Pol16}) generalizes Saito's logarithmic differential forms (see \cite{Sai80}) from the hypersurface to the complete intersection case.

\begin{dfn}\label{52}
The module of \emph{multi-logarithmic differential $q$-forms on $Y$ along $C$} is defined by
\begin{align*}
\Omega^q(\log C)=\Omega_Y^q(\log C)&:=\set{\omega\in\Omega^q_Y\xmid d\I_C\wedge\omega\subseteq\I_C\Omega^{q+1}_Y}(D)\\
&=\set{\omega\in\Omega^q_Y(D)\xmid\forall i=1,\dots,k\colon dh_i\wedge\omega\in\Sigma\Omega^{q+1}_Y}
\end{align*}
where the equality is due to the Leibniz rule.
Observe that
\[
\Sigma\Omega^q_Y\subseteq \Omega^q(\log C)\subseteq\QQ_Y\otimes_{\OO_Y}\Omega_Y^q
\]
with $\Omega^q(\log C)(-D)\subseteq\QQ_Y\otimes_{\OO_Y}\Omega_Y^q$ independent of $D$ (see \cite[Prop.~3.1.10]{Pol16}).
\end{dfn}

%%%%%%%%%%%%%%%%%%%%%%%%%%%%%%%%%%%%%%%%%%%%%%%%%%%%%%%%%%%%%%%%%%%%%%%%%%%%%%%
Extending Saito's theory (see \cite[\S1-2]{Sai80}) Aleksandrov (see \cite[\S 3-4,6]{Ale12}) gives an explicit description of multi-logarithmic differential forms and defines a multi-logarithmic residue map.
We summarize his results.

\begin{prp}\label{53}
An element $\omega\in\Omega^q_Y(D)$ lies in $\Omega^q(\log C)$ if and only if there exist $g\in \OO_Y$ inducing a non zero-divisor in $\OO_C$, $\xi\in \Omega^{q-k}_Y$ and $\eta\in\Sigma\Omega^q_Y$ such that
\[
g\omega=\frac{d\ul h}{h}\wedge \xi+\eta.
\]
This representation defines a \emph{multi-logarithmic residue map} 
\[
\res_C^q\colon\Omega^q(\log C)\to\QQ_C\otimes_{\OO_C}\Omega_C^{q-k},\quad \omega\mapsto\frac\xi g,
\]
that fits into a short exact \emph{multi-logarithmic residue sequence}
\begin{equation}\label{123}
\xymatrix
{
0\ar[r] & \Sigma\Omega_Y^q\ar[r] & \Omega^q(\log C)\ar[r]^-{\res_C^q} & \omega_C^{q-k}\ar[r] & 0
}
\end{equation}
where $\omega_C^p$ is the module of regular meromorphic $p$-forms on $C$.\qed
\end{prp}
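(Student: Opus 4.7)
The plan is to treat the pointwise characterization of $\Omega^q(\log C)$, the well-definedness of $\res_C^q$, and the exactness of the residue sequence in turn.

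\textbf{Direction ($\Leftarrow$).} Suppose $g\omega=\frac{d\ul h}{h}\wedge\xi+\eta$ with $\eta\in\Sigma\Omega^q_Y$ and $g\in\OO_Y$ inducing a non zero-divisor in $\OO_C$. Since $dh_i$ already appears in $d\ul h$, one has $dh_i\wedge d\ul h=0$ and thus $g\cdot dh_i\wedge\omega=dh_i\wedge\eta\in\Sigma\Omega^{q+1}_Y$. Multiplying through by $h$ gives $gh\cdot dh_i\wedge\omega\in\I_C\Omega^{q+1}_Y$; since $\Omega^{q+1}_Y/\I_C\Omega^{q+1}_Y$ is a free $\OO_C$-module on which $g$ is a non zero-divisor, one divides by $g$ to obtain $h\cdot dh_i\wedge\omega\in\I_C\Omega^{q+1}_Y$, equivalently $dh_i\wedge\omega\in\Sigma\Omega^{q+1}_Y$, so $\omega\in\Omega^q(\log C)$.

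\textbf{Direction ($\Rightarrow$).} This is the main content; I would argue by induction on $k$. The base case $k=1$ is Saito's lemma (see \cite[(1.1)]{Sai80}): reducedness of $h$ yields $g\in\OO_Y$ a non zero-divisor in $\OO_D$, $\xi\in\Omega^{q-1}_Y$, $\eta\in\Omega^q_Y$ with $g\omega=\frac{dh}{h}\wedge\xi+\eta$. For the inductive step, set $C'=D_1\cap\dots\cap D_{k-1}$. The hypotheses $dh_i\wedge\omega\in\Sigma\Omega^{q+1}_Y$ for $i<k$, after multiplication by $h_k$, place $h_k\omega$ inside $\Omega^q(\log C')$; the induction hypothesis supplies a $(k-1)$-fold representation of $h_k\omega$ along $C'$, and the remaining condition $dh_k\wedge\omega\in\Sigma\Omega^{q+1}_Y$ combined with a Saito-type argument along $D_k$ extracts a final $\frac{dh_k}{h_k}$-factor, producing the required $k$-fold representation of $\omega$. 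All denominators accumulated along the induction are absorbed into a single $g$; because $C$ is reduced and $\ul h$ a regular sequence, this $g$ can be chosen to remain a non zero-divisor in $\OO_C$.

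\textbf{Well-definedness, kernel, image.} Two representations $g\omega=\frac{d\ul h}{h}\wedge\xi+\eta$ and $g'\omega=\frac{d\ul h}{h}\wedge\xi'+\eta'$ of the same $\omega$ give $\frac{d\ul h}{h}\wedge(g'\xi-g\xi')\in\Sigma\Omega^q_Y$, i.e.\ $d\ul h\wedge(g'\xi-g\xi')\in\I_C\Omega^q_Y$. At a generic point of $C$, where $\ul h$ is a regular sequence of parameters and the wedge map $\Omega^{q-k}_C\xrightarrow{\wedge d\ul h}\Omega^q_C$ has controlled kernel, one deduces $g'\xi\equiv g\xi'$ modulo $\I_C\Omega^{q-k}_Y$, so $\xi/g=\xi'/g'$ in $\QQ_C\otimes_{\OO_C}\Omega^{q-k}_C$. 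The kernel of $\res_C^q$ is $\Sigma\Omega^q_Y$ (take $\xi=0$; then $g\omega\in\Sigma\Omega^q_Y$, and $\omega\in\Sigma\Omega^q_Y$ follows from the $g$-saturation argument of the $(\Leftarrow)$ direction). Surjectivity onto $\omega_C^{q-k}$ follows from the classical local description of regular meromorphic forms on a reduced complete intersection via the fundamental class $\frac{d\ul h}{h_1\cdots h_k}$ (see \cite[\S4]{Ale12}).

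\textbf{Main obstacle.} The principal difficulty is the inductive step of ($\Rightarrow$): one must coordinate the successive Saito-type extractions so that the accumulated denominator $g$ remains a non zero-divisor in $\OO_C$. This leans on reducedness of $C$ (so that $\Ass(\OO_C)=\Min(\OO_C)$) and on $\ul h$ being a regular sequence, which together govern the behavior of zero-divisor loci when passing between the intersections $D_1\cap\dots\cap D_j$ for $j<k$.
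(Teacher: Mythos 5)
Your direction ($\Leftarrow$) is correct, and your outline of well-definedness and of the kernel computation is workable (modulo the small imprecision that from $d\ul h\wedge(g'\xi-g\xi')\in\I_C\Omega^q_Y$ one only gets $g'\xi\equiv g\xi'$ modulo $\I_C\Omega^{q-k}_Y+d\I_C\wedge\Omega^{q-k-1}_Y$, which is still enough since that is exactly what dies in $\QQ_C\otimes_{\OO_C}\Omega_C^{q-k}$). The genuine gap is the inductive step of ($\Rightarrow$), which is the heart of the proposition. The claim that the hypotheses $dh_i\wedge\omega\in\Sigma\Omega_Y^{q+1}$ for $i<k$, ``after multiplication by $h_k$,'' place $h_k\omega$ inside $\Omega^q(\log C')$ is false. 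Writing $h'=h_1\cdots h_{k-1}$ and $\Sigma'=\frac{1}{h'}\I_{C'}$, from $dh_i\wedge\omega\in\frac{1}{h}\I_C\Omega_Y^{q+1}$ and $\I_C=\I_{C'}+\ideal{h_k}$ one only obtains
\[
dh_i\wedge(h_k\omega)\in\Sigma'\Omega_Y^{q+1}+\tfrac{h_k}{h'}\Omega_Y^{q+1},
\]
and the second summand is not contained in $\Sigma'\Omega_Y^{q+1}$ because $h_k\notin\I_{C'}$. Concretely, take $n=3$, $k=2$, $\ul h=(x,y)$, $C=\set{x=y=0}\subseteq(\CC^3,0)$ and $\omega=\frac{dy\wedge dz}{x}=\frac{y\,dy\wedge dz}{xy}\in\Sigma\Omega_Y^2\subseteq\Omega^2(\log C)$; then $h_2\omega=\frac{y\,dy\wedge dz}{x}$ satisfies $dx\wedge(h_2\omega)=\frac{y}{x}\,dx\wedge dy\wedge dz\notin\Omega_Y^3=\Sigma'\Omega_Y^3$, so $h_2\omega\notin\Omega^2(\log D_1)$. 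There is a second, independent obstruction: the induction hypothesis requires $C'=D_1\cap\dots\cap D_{k-1}$ (and your final Saito-type step requires $D_k$) to be \emph{reduced}, and reducedness of $C$ does not imply this. The paper allows $\ul h$ to be an arbitrary regular sequence generating the radical ideal $\I_C$, so the individual divisors $D_i$ and the partial intersections may be non-reduced, in which case neither Saito's lemma nor your inductive statement applies to them.

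For calibration: the paper itself gives no proof of this proposition (hence the \qed in its statement); it summarizes Aleksandrov's results, and the standard argument there (also in Pol's thesis) is not an induction on $k$. Since $C$ is reduced, its smooth points are dense in $C$, and at a smooth point the rank of the Jacobian of $(h_1,\dots,h_k)$ is $k$, so $\ul h$ extends to a local coordinate system; in such coordinates the membership conditions $dh_i\wedge\omega\in\Sigma\Omega_Y^{q+1}$ yield the representation $\omega=\frac{d\ul h}{h}\wedge\xi+\eta$ directly, with $g=1$. One then globalizes by a coherence argument, choosing a single $g$ vanishing on the locus where the local representations fail (contained in the singular locus of $C$) but on no irreducible component of $C$; such a $g$ is a non zero-divisor in $\OO_C$ precisely because reducedness gives $\Ass(\OO_C)=\Min\Spec(\OO_C)$. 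So your closing remark correctly locates where reducedness of $C$ enters, but the mechanism you propose (partial intersections plus repeated Saito extractions) cannot be repaired to deliver it.
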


%%%%%%%%%%%%%%%%%%%%%%%%%%%%%%%%%%%%%%%%%%%%%%%%%%%%%%%%%%%%%%%%%%%%%%%%%%%%%%%

\begin{cor}\label{125}
For $q<k$, $\Omega^q(\log C)=\Sigma\Omega_Y^q$ and $\Omega^n(\log C)=\Omega_Y^n(D)$.\qed
\end{cor}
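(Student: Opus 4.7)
The plan is to derive both equalities directly from what has already been established, since the statement is essentially a degenerate-case observation about the bounds on $q$.

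For the first equality, I would invoke the multi-logarithmic residue sequence~\eqref{123}. When $q<k$, the target $\omega_C^{q-k}$ of $\res_C^q$ has negative form-degree $q-k<0$ and is therefore zero by convention (the module of regular meromorphic $p$-forms vanishes for $p<0$, just as $\Omega_C^p=0$ for $p<0$). The short exact sequence
\[
\xymatrix{0\ar[r] & \Sigma\Omega_Y^q\ar[r] & \Omega^q(\log C)\ar[r]^-{\res_C^q} & \omega_C^{q-k}\ar[r] & 0}
\]
then collapses to an equality $\Omega^q(\log C)=\Sigma\Omega_Y^q$. Alternatively one can argue from Proposition~\ref{53} itself: in any representation $g\omega=\frac{d\ul h}{h}\wedge\xi+\eta$ one has $\xi\in\Omega_Y^{q-k}=0$, so $g\omega=\eta\in\Sigma\Omega_Y^q$, and multiplying by $1/g$ in $\QQ_Y$ (using that $g$ is a non zero-divisor) yields $\omega\in\Sigma\Omega_Y^q$; the reverse inclusion is recorded in Definition~\ref{52}.

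For the second equality, I would go directly to the defining condition in Definition~\ref{52}. Since $Y$ has dimension $n$, the module $\Omega_Y^{n+1}=0$, hence $\Sigma\Omega_Y^{n+1}=0$ as well. The constraint $dh_i\wedge\omega\in\Sigma\Omega_Y^{n+1}$ is therefore automatic for every $i=1,\dots,k$ and every $\omega\in\Omega_Y^n(D)$, yielding $\Omega^n(\log C)=\Omega_Y^n(D)$.

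Neither step poses any genuine obstacle; the content is entirely bookkeeping about when the relevant exterior powers become trivial, so the proof should be just a couple of lines invoking \eqref{123} and Definition~\ref{52}.
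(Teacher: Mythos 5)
Your proof is correct and follows exactly the route the paper intends: the corollary carries a \qed precisely because it is the degenerate-case reading of Proposition~\ref{53} and the residue sequence~\eqref{123} (for $q<k$ the target $\omega_C^{q-k}$ vanishes, equivalently $\xi\in\Omega_Y^{q-k}=0$, and for $q=n$ the defining condition in Definition~\ref{52} is vacuous since $\Omega_Y^{n+1}=0$). The only cosmetic point is that in your alternative argument, passing from $g\omega\in\Sigma\Omega_Y^q$ to $\omega\in\Sigma\Omega_Y^q$ is best phrased by noting that $gh\omega\in\I_C\Omega_Y^q$ and $g$ is a non zero-divisor on the free $\OO_C$-module $\Omega_Y^q/\I_C\Omega_Y^q$, rather than literally dividing by $g$ in $\QQ_Y$.
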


%%%%%%%%%%%%%%%%%%%%%%%%%%%%%%%%%%%%%%%%%%%%%%%%%%%%%%%%%%%%%%%%%%%%%%%%%%%%%%%

\begin{rmk}\label{59}
The multi-logarithmic residue map can be written in terms of residue symbols as $\res_C^q(\omega)=\begin{bmatrix}h\omega\\\ul h\end{bmatrix}$ (see \cite[\S1.2]{Sch16}\footnote{This remark was made in the first author's talk \enquote{Normal crossings in codimension one} at the 2012 Oberwolfach conference \enquote{Singularities} (see \cite{Sch12}).}).
In particular $\res_C^k(\frac{dh}h)=\begin{bmatrix}dh\\\ul h\end{bmatrix}\in\omega_C^k$ is the fundamental form of $C$ (see \cite[\S5]{Ker83b}).\qed
\end{rmk}

%%%%%%%%%%%%%%%%%%%%%%%%%%%%%%%%%%%%%%%%%%%%%%%%%%%%%%%%%%%%%%%%%%%%%%%%%%%%%%%

Higher logarithmic derivation modules play a prominent role in arrangement theory (see for instance \cite{ATW07}).
Here we extend the definitions of Granger and the first author (see \cite[\S 5]{GS12}) and by Pol (see \cite[Def.~3.2.1]{Pol16}) as follows.

\begin{dfn}\label{54}
We define the module of \emph{multi-logarithmic $q$-vector fields on $Y$ along $C$} by
\begin{align*}
\Der^q(-\log C)=\Der_Y^q(-\log C)&:=\set{\delta\in\Theta^q_Y\xmid\ideal{\delta,\wedge^kd\I_C\wedge\Omega^{q-k}_Y}\subseteq\I_C}\\
&=\set{\delta\in\Theta^q_Y\xmid\ideal{\delta,d\ul h\wedge \Omega_Y^{q-k}}\subseteq\I_C}
\end{align*}
where the equality is due to the Leibniz rule.
Observe that
\[
\I_C\Theta_Y^q\subseteq\Der^q(-\log C).
\]
\end{dfn}

%%%%%%%%%%%%%%%%%%%%%%%%%%%%%%%%%%%%%%%%%%%%%%%%%%%%%%%%%%%%%%%%%%%%%%%%%%%%%%%

\begin{lem}\label{86}
We can identify the functors on $\OO_Y$-modules (see Notation~\ref{99})
\[
-^\Sigma=-(-D)^{\I_C}, \quad (\Sigma\otimes_{\OO_Y}-)^\Sigma=-^*,
\]
and hence $-^{\Sigma\Sigma}=-^{\I_C\I_C}$.
\end{lem}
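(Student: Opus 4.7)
The plan is to reduce each identification to two elementary facts: the invertibility of $\OO_Y(-D)$ and the computation $\End_{\OO_Y}(\Sigma) = \OO_Y$. Since $h = h_1 \cdots h_k$ is a non-zero-divisor, $\OO_Y(-D) = h\OO_Y$ is invertible with inverse $\OO_Y(D) = \frac{1}{h}\OO_Y$; in particular $\Sigma(-D) = \I_C(D)(-D) = \I_C$. For the first identity I would apply tensor-hom adjunction for the invertible twist:
\[
\Hom_{\OO_Y}(M(-D), \I_C) = \Hom_{\OO_Y}(M, \I_C \otimes_{\OO_Y} \OO_Y(D)) = \Hom_{\OO_Y}(M, \Sigma),
\]
naturally in $M$, yielding $-(-D)^{\I_C} = -^\Sigma$.

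For the second identity, Hom-tensor adjunction gives $(\Sigma \otimes_{\OO_Y} -)^\Sigma = \Hom_{\OO_Y}(-, \End_{\OO_Y}(\Sigma))$, so it suffices to prove $\End_{\OO_Y}(\Sigma) = \OO_Y$. Via the iso $\Sigma \cong \I_C$ (multiplication by $h$, trivializing $\OO_Y(D)$) this reduces to $\End_{\OO_Y}(\I_C) = \OO_Y$. The case $k = 1$ is immediate since $\I_C = (h_1)$ is principal. For $k \ge 2$, Ischebeck's Lemma (Lemma~\ref{7}) yields
\[
\Hom_{\OO_Y}(\OO_C, \OO_Y) = 0 = \Ext^1_{\OO_Y}(\OO_C, \OO_Y),
\]
and dualizing the short exact sequence $0 \to \I_C \to \OO_Y \to \OO_C \to 0$ gives $\I_C^* = \OO_Y$. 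The canonical inclusions $\OO_Y \hookrightarrow \End_{\OO_Y}(\I_C) \hookrightarrow \I_C^*$ (the second induced by $\I_C \hookrightarrow \OO_Y$) then collapse to equalities.

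For the final claim I would iterate the first identity, using invertibility of $\OO_Y(-D)$ to move the twist through $\Hom$:
\[
-^{\Sigma\Sigma} = \bigl(-^\Sigma(-D)\bigr)^{\I_C} = \Hom_{\OO_Y}\!\bigl(-, \Sigma(-D)\bigr)^{\I_C} = \bigl(-^{\I_C}\bigr)^{\I_C} = -^{\I_C \I_C}.
\]

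The main obstacle is the End computation, which requires the codimension hypothesis $k \ge 2$ for Ischebeck's vanishing; the $k = 1$ case must be handled separately, as noted. Once $\End_{\OO_Y}(\Sigma) = \OO_Y$ is in hand, everything else is formal manipulation of invertible twists and Hom-tensor adjunctions.
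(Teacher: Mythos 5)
Your proposal is correct and follows essentially the same route as the paper: Hom-tensor adjunction plus invertibility of $\OO_Y(\pm D)$ for the first identity and the iteration, and the identification $\End_{\OO_Y}(\Sigma)=\Sigma^\Sigma=\I_C^{\I_C}=\OO_Y$ for the second. The only difference is that you re-derive this last fact from Ischebeck's vanishing applied to the dual of $0\to\I_C\to\OO_Y\to\OO_C\to0$, whereas the paper simply cites its Lemma~\ref{2}, whose proof is exactly that argument; your separate treatment of $k=1$ is consistent with the paper's remark that $\Sigma=\OO_Y$ in that case.
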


\begin{proof}
Since $\OO_Y(D)$ is invertible and by Hom-tensor adjunction
\[
-^\Sigma=\Hom_{\OO_Y}(-,\I_C(D))
=\Hom_{\OO_Y}(-,\Hom_{\OO_Y}(\OO_Y(-D),\I_C))
=-(-D)^{\I_C}
\]
By Lemma~\ref{2} in case $k\ge2$, $\OO_Y=\I_C^{\I_C}=\Sigma^\Sigma$ and again by Hom-tensor adjunction
\[
(\Sigma\otimes_{\OO_Y}-)^\Sigma=\Hom_{\OO_Y}(\Sigma\otimes_{\OO_Y}-,\Sigma)=\Hom_{\OO_Y}(-,\Sigma^\Sigma)=-^*.\qedhere
\]
\end{proof}

%%%%%%%%%%%%%%%%%%%%%%%%%%%%%%%%%%%%%%%%%%%%%%%%%%%%%%%%%%%%%%%%%%%%%%%%%%%%%%%

\begin{lem}\label{55}
Any elements $\delta\in \Der^q(-\log C)$ and $\omega\in \Omega^q(\log C)$ pair to $\ideal{\delta,\omega}\in\Sigma$.
\end{lem}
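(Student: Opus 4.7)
The plan is to use the explicit representation of multi-logarithmic forms from Proposition~\ref{53} and exploit the defining property of $\Der^q(-\log C)$.

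First, by Proposition~\ref{53}, choose $g\in\OO_Y$ inducing a non-zero divisor in $\OO_C$, $\xi\in\Omega_Y^{q-k}$ and $\eta\in\Sigma\Omega_Y^q$ with
\[
g\omega=\frac{d\ul h}{h}\wedge\xi+\eta,
\]
and clear denominators to obtain $hg\omega=d\ul h\wedge\xi+h\eta$, where $h\eta\in h\Sigma\Omega_Y^q=\I_C\Omega_Y^q$ since $\Sigma=\I_C/h$. Pairing with $\delta$ using $\OO_Y$-bilinearity of \eqref{115} gives
\[
hg\ideal{\delta,\omega}=\ideal{\delta,d\ul h\wedge\xi}+\ideal{\delta,h\eta}.
\]
By Definition~\ref{54}, $\ideal{\delta,d\ul h\wedge\xi}\in\I_C$; and $\ideal{\delta,h\eta}\in\I_C\ideal{\delta,\Omega_Y^q}\subseteq\I_C$. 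Hence $hg\ideal{\delta,\omega}\in\I_C$.

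Next, observe that since $\omega\in\Omega_Y^q(D)$, i.e.\ $h\omega\in\Omega_Y^q$, we have $h\ideal{\delta,\omega}=\ideal{\delta,h\omega}\in\OO_Y$, so $\ideal{\delta,\omega}=a/h$ for some $a\in\OO_Y$. The previous calculation then says $ga\in\I_C$, and because $g$ is a non-zero divisor in $\OO_C=\OO_Y/\I_C$, we conclude $a\in\I_C$. Therefore
\[
\ideal{\delta,\omega}=\frac{a}{h}\in\frac{\I_C}{h}=\Sigma,
\]
as required.

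The main subtlety is step three: the direct computation only yields $g\ideal{\delta,\omega}\in\Sigma$, which is weaker than what is claimed, because multiplication by $g$ need not be injective on $\QQ_Y/\Sigma$. The resolution is to separately record that $\omega$ has at worst simple poles along $D$, so that $\ideal{\delta,\omega}\in\tfrac{1}{h}\OO_Y$; within this module, $\Sigma=\I_C/h$ is cut out by the ideal $\I_C$, and non-zero divisibility of $g$ modulo $\I_C$ is exactly what is needed to pass from $g\ideal{\delta,\omega}\in\Sigma$ to $\ideal{\delta,\omega}\in\Sigma$.
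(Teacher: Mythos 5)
Your proof is correct and follows essentially the same route as the paper: both use the representation $hg\omega = d\ul h\wedge\xi + h\eta$ from Proposition~\ref{53}, pair with $\delta$ to get $g\ideal{\delta,h\omega}\in\I_C$ via Definition~\ref{54}, and then cancel $g$ using that it is a non-zerodivisor modulo $\I_C$. The ``subtlety'' you flag at the end --- that one must work inside $\frac1h\OO_Y$ so that cancelling $g$ happens in $\OO_C$ rather than in $\QQ_Y/\Sigma$ --- is precisely how the paper's own proof is organized, so there is no gap.
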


\begin{proof}
Let $g$, $\xi$ and $\eta$ be as in Proposition~\ref{53}.
Then by definition
\[
g\ideal{\delta,h\omega}=
\ideal{\delta,h g\omega}=
\ideal{\delta,d\ul h\wedge\xi+h\eta}=
\ideal{\delta,d\ul h\wedge\xi}+h\ideal{\delta,\eta}
\in\I_C.
\]
Since $g$ induces a non zero-divisor in $\OO_C=\OO_Y/\I_C$ this implies that  $\ideal{\delta,h\omega}\in\I_C$ and hence $\ideal{\delta,\omega}\in\frac1h\I_C=\Sigma$.
\end{proof}

%%%%%%%%%%%%%%%%%%%%%%%%%%%%%%%%%%%%%%%%%%%%%%%%%%%%%%%%%%%%%%%%%%%%%%%%%%%%%%%

The following proofs for $q\ge k\ge1$ proceed along the lines of Saito's base case $q=k=1$ (see \cite[(1.6)]{Sai80}) or Pol's generalization to $q=k\ge1$ (see \cite[Prop.~3.2.13]{Pol16}).

\begin{lem}\label{98}
If $\omega\in\Omega^q_Y(D)$ with $\ideal{\Der^q(-\log C),\omega}\subseteq\Sigma$, then $\omega\in\Omega^q(\log C)$.
\end{lem}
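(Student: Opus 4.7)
The plan is to reduce the membership $\omega\in\Omega^q(\log C)$, via Definition~\ref{52}, to verifying $dh_i\wedge\omega\in\Sigma\Omega^{q+1}_Y$ for each $i=1,\dots,k$. Since $\Omega^{q+1}_Y$ is a free $\OO_Y$-module and the pairing \eqref{115} extends $\QQ_Y$-linearly, I will use the elementary fact that an element $\eta\in\QQ_Y\otimes_{\OO_Y}\Omega^{q+1}_Y$ lies in $\Sigma\Omega^{q+1}_Y$ if and only if $\ideal{\delta,\eta}\in\Sigma$ for every $\delta\in\Theta^{q+1}_Y$; one direction is immediate from $\Sigma$-linearity of the pairing, the other is obtained by taking $\delta=\partial/\partial x_{\ul j}$ to read off the coefficient of $dx_{\ul j}$ via \eqref{120}. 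The target thus becomes: show $\ideal{\delta,dh_i\wedge\omega}\in\Sigma$ for every $\delta\in\Theta^{q+1}_Y$.

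The bridge from $\Theta^{q+1}_Y$ to $\Der^q(-\log C)$ will be contraction with $dh_i$, written $\iota_{dh_i}\colon\Theta^{q+1}_Y\to\Theta^q_Y$, together with the standard adjunction $\ideal{\delta,dh_i\wedge\mu}=\ideal{\iota_{dh_i}(\delta),\mu}$, which holds identically in $\mu\in\Omega^q_Y$ and extends $\QQ_Y$-linearly so it applies to $\mu=\omega\in\Omega^q_Y(D)$. The crucial observation is that this contraction automatically produces multi-logarithmic vector fields: for any $\xi\in\Omega^{q-k}_Y$,
\[
\ideal{\iota_{dh_i}(\delta),d\ul h\wedge\xi}=\ideal{\delta,dh_i\wedge d\ul h\wedge\xi}=0
\]
because $dh_i$ already appears as a factor of $d\ul h=dh_1\wedge\dots\wedge dh_k$, so $dh_i\wedge d\ul h=0$. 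By Definition~\ref{54} this places $\iota_{dh_i}(\delta)$ in $\Der^q(-\log C)$ for every $\delta\in\Theta^{q+1}_Y$.

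Combining the two steps, the hypothesis $\ideal{\Der^q(-\log C),\omega}\subseteq\Sigma$ at once gives $\ideal{\delta,dh_i\wedge\omega}=\ideal{\iota_{dh_i}(\delta),\omega}\in\Sigma$ for every $\delta\in\Theta^{q+1}_Y$, whence $dh_i\wedge\omega\in\Sigma\Omega^{q+1}_Y$ by the opening paragraph and therefore $\omega\in\Omega^q(\log C)$. I do not expect a genuine obstacle; the only care required is in signs in the contraction identity and in the $\QQ_Y$-linear extension of the pairing, both routine. The degenerate range $q<k$ (in which $\Der^q(-\log C)=\Theta^q_Y$ and $\Omega^q(\log C)=\Sigma\Omega^q_Y$ by Corollary~\ref{125}) is handled directly by applying the coefficient-wise criterion to $\omega$ itself, bypassing the contraction step altogether.
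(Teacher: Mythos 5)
Your proof is correct and is essentially the paper's argument in coordinate-free form: the paper's test vector fields $\delta^\ell_{\ul j}=\sum_i(-1)^{i+1}\frac{\partial h_\ell}{\partial x_{j_i}}\frac{\partial}{\partial x_{\ul j_{\hat i}}}$ are exactly your contractions $\iota_{dh_\ell}(\partial/\partial x_{\ul j})$, and its vanishing minor $\frac{\partial(h_\ell,\ul h,x_{\ul i})}{\partial x_{\ul j}}=0$ is the coordinate expression of your observation that $dh_\ell\wedge d\ul h=0$. The concluding identity $dh_\ell\wedge\omega=\sum_{\ul j}\ideal{\delta^\ell_{\ul j},\omega}\,dx_{\ul j}$ in the paper is your adjunction plus coefficient extraction, so no substantive difference remains.
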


\begin{proof}
For every $\ell\in\set{1,\dots,k}$ and $\ul j\in N^{q+1}_<$ consider
\[
\delta_{\ul j}^\ell:=\sum_{i=1}^{q+1}(-1)^{i+1}\frac{\partial h_\ell}{\partial x_{j_i}}\frac{\partial}{\partial x_{\ul j_{\hat{i}}}}\in\Theta_Y^q.
\]
For every $\ul i\in N^{q-k}$
\[
d\ul h\wedge dx_{\ul i}=\sum_{\ul k\in N^q_<}\frac{\partial(\ul h,x_{\ul i})}{\partial x_{\ul k}}dx_{\ul k},
\]
where $\frac{\partial(\ul h,x_{\ul i})}{\partial x_{\ul k}}$ is the $q\times q$-minor of the Jacobian matrix of $(\ul h,x_{\ul i})$ with column indices $\ul k$, and hence using \eqref{120}
\begin{align*}
\ideal{\delta^\ell_{\ul j},d\ul h\wedge dx_{\ul i}}
&=\sum_{i=1}^{q+1}(-1)^{i+1}\frac{\partial h_\ell}{\partial x_{j_i}}\sum_{\ul k\in N^q_<}\frac{\partial(\ul h,x_{\ul i})}{\partial x_{\ul k}}\ideal{\frac{\partial}{\partial x_{\ul j_{\hat{i}}}},dx_{\ul k}}
\\
&=\sum_{i=1}^{q+1}(-1)^{i+1}\frac{\partial h_\ell}{\partial x_{j_i}}\frac{\partial(\ul h,x_{\ul i})}{\partial x_{\ul j_{\hat{i}}}}
=\frac{\partial(h_\ell,\ul h,x_{\ul i})}{ \partial x_{\ul j}}=0.
\end{align*}
It follows that $\delta^\ell_{\ul j}\in\Der^q(-\log C)$ for all $\ell=1,\dots,k$ and $\ul j\in N^{q+1}_<$.

Now let $\omega=\sum_{\ul k\in N^q_<}\frac{a_{\ul k}}{h}dx_{\ul k}\in\Omega^q_Y(D)$ where $a_{\ul k}\in\OO_Y$.
For all $\ell=1,\dots,k$ and $\ul j\in N^{q+1}_<$
\[
\ideal{\delta^\ell_{\ol j},\omega}=
\sum_{i=1}^{q+1}(-1)^{i+1}\frac{\partial h_\ell}{\partial x_{j_i}}\sum_{\ul k \in N^q_<}\frac{a_{\ul k}}{h}\ideal{\frac{\partial}{\partial x_{\ul j_{\hat{i}}}},dx_{\ul k}}=
\sum_{i=1}^{q+1}(-1)^{i+1}\frac{\partial h_\ell}{\partial x_{j_i}}
\frac{a_{\ul j_{\hat{i}}}}h
\]
by \eqref{120} and hence
\begin{align*}
dh_\ell\wedge\omega&=\sum_{j=1}^n\frac{\partial h_\ell}{\partial x_j}dx_j\wedge\sum_{\ul k\in N^q_<}\frac{a_{\ul k}}{h}dx_{\ul k}
=\sum_{\ul j\in N^{q+1}_<}\sum_{i=1}^{q+1}\frac{\partial h_\ell}{\partial x_{j_i}}\frac{a_{\ul j_{\hat{i}}}}hdx_{j_i}\wedge dx_{\ul j_{\hat{i}}}\\
&=\sum_{\ul j\in N^{q+1}_<}\sum_{i=1}^{q+1}(-1)^{i+1}\frac{\partial h_\ell}{\partial x_{j_i}}\frac{a_{\ul j_{\hat{i}}}}hdx_{\ul j}
=\sum_{\ul j\in N^{q+1}_<}\ideal{\delta^\ell_{\ul j},\omega}dx_{\ul j}.
\end{align*}
If $\ideal{\Der^q(-\log C),\omega}\subseteq\Sigma$, then $dh_\ell\wedge\omega\in\Sigma\Omega_Y^q$ for all $\ell=1,\dots,k$ and hence $\omega\in\Omega^q(\log C)$.
\end{proof}

%%%%%%%%%%%%%%%%%%%%%%%%%%%%%%%%%%%%%%%%%%%%%%%%%%%%%%%%%%%%%%%%%%%%%%%%%%%%%%%

\begin{prp}\label{77}
There are chains of $\OO_Y$-submodules of $\QQ_Y\otimes_{\OO_Y}\Omega_Y^q$ and $\QQ_Y\otimes_{\OO_Y}\Theta_Y^q$ 	
\begin{gather}
\label{56}
\Omega^q_Y\subseteq\Sigma\Omega^q_Y\subseteq\Omega^q(\log C)\subseteq\Omega^q_Y(D)\subseteq \Sigma\Omega^q_Y(D),\\
\label{57}
\Sigma\Theta^q_Y\supseteq\Theta^q_Y\supseteq\Der^q(-\log C)\supseteq\I_C\Theta^q_Y\supseteq\Theta^q_Y(-D)
\end{gather}
that are $\Sigma$-duals of each other.
\end{prp}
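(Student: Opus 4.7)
The plan is to first verify the ten chain inclusions directly and then identify each $\Sigma$-dual pair. For the inclusions, $h\in\I_C$ gives $\OO_Y=(1/h)\cdot h\OO_Y\subseteq (1/h)\I_C=\Sigma$; this immediately yields $\Omega^q_Y\subseteq\Sigma\Omega^q_Y$, $\Omega^q_Y(D)\subseteq\Sigma\Omega^q_Y(D)$ and dually $\Theta^q_Y\subseteq\Sigma\Theta^q_Y$. The inclusions involving $\Omega^q(\log C)$ and $\Der^q(-\log C)$ are part of Definitions~\ref{52} and~\ref{54}, and $\Theta^q_Y(-D)=h\Theta^q_Y\subseteq\I_C\Theta^q_Y$ again uses $h\in\I_C$.

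For the four outer $\Sigma$-dual identifications (on each side) I would combine Hom--tensor adjunction with Lemma~\ref{86} and the mutual $\OO_Y$-duality of the free modules $\Omega^q_Y$ and $\Theta^q_Y$. Explicitly, $(\Omega^q_Y)^\Sigma=\Sigma\otimes_{\OO_Y}\Theta^q_Y=\Sigma\Theta^q_Y$ by freeness; $(\Sigma\Omega^q_Y)^\Sigma=(\Omega^q_Y)^*=\Theta^q_Y$ from Lemma~\ref{86}; $(\Omega^q_Y(D))^\Sigma=\Hom_{\OO_Y}(\Omega^q_Y,\Sigma(-D))=\I_C\Theta^q_Y$ using $\Sigma(-D)=\I_C$; and $(\Sigma\Omega^q_Y(D))^\Sigma=(\Omega^q_Y(D))^*=\Theta^q_Y(-D)$ again by Lemma~\ref{86}. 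Symmetrically, interchanging $\Omega$ and $\Theta$ and invoking $\I_C^\Sigma=\OO_Y(D)$, which follows from Lemma~\ref{2} applied to the regular and hence Gorenstein ring $R=\OO_Y$ with $\omega_R=\OO_Y$, one obtains the reverse outer identifications.

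The essential step is the middle identification $(\Omega^q(\log C))^\Sigma=\Der^q(-\log C)$ and, symmetrically, $(\Der^q(-\log C))^\Sigma=\Omega^q(\log C)$. Lemma~\ref{55} supplies the canonical maps $\Der^q(-\log C)\to(\Omega^q(\log C))^\Sigma$ and $\Omega^q(\log C)\to(\Der^q(-\log C))^\Sigma$ in one direction. To pin down the target I would first apply $\Hom_{\OO_Y}(-,\Sigma)$ to the inclusions $\Sigma\Omega^q_Y\hookrightarrow\Omega^q(\log C)$ and $\Der^q(-\log C)\hookrightarrow\Theta^q_Y$; since the respective cokernels $\omega_C^{q-k}$ (see Proposition~\ref{53}) and $\Theta^q_Y/\Der^q(-\log C)$ are $\OO_C$-modules while $\Sigma\subseteq\QQ_Y$ is $\OO_Y$-torsion-free, one obtains embeddings $(\Omega^q(\log C))^\Sigma\hookrightarrow\Theta^q_Y$ and $(\Der^q(-\log C))^\Sigma\hookrightarrow\Omega^q_Y(D)$. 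Then for $\varphi\in(\Omega^q(\log C))^\Sigma\subseteq\Theta^q_Y$, testing against the pure residue form $(d\ul h\wedge\xi)/h\in\Omega^q(\log C)$ (available from Proposition~\ref{53} with $g=1$, $\eta=0$) yields $\ideal{\varphi,d\ul h\wedge\xi}\in\I_C$ for every $\xi\in\Omega^{q-k}_Y$, which is precisely the defining condition $\varphi\in\Der^q(-\log C)$. In the reverse direction, Lemma~\ref{98} is exactly tailored to conclude $(\Der^q(-\log C))^\Sigma\subseteq\Omega^q(\log C)$.

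The main obstacle is absorbed into Lemma~\ref{98}, whose proof constructs enough explicit logarithmic $q$-vector fields $\delta^\ell_{\ul j}$ to detect every defining pairing obstruction. Once Lemma~\ref{98} is in hand, everything else reduces either to formal Hom--tensor manipulations or to the torsion-freeness of $\Sigma$ against $\OO_C$-modules, so no further input is required.
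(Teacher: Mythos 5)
Your proposal is correct and follows essentially the same route as the paper: torsion-freeness of $\Sigma$ against the torsion cokernels to embed the dual chains, Lemma~\ref{86} plus freeness for the four outer identifications, Lemma~\ref{55} together with testing against the forms $\frac{d\ul h}{h}\wedge\xi$ from Proposition~\ref{53} for $\Omega^q(\log C)^\Sigma=\Der^q(-\log C)$, and Lemma~\ref{55} with Lemma~\ref{98} for the reverse identification. No gaps.
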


\begin{proof}
Tensoring with $\QQ_Y$ makes both chains collapse.
The cokernels of all inclusions are therefore torsion whereas $\Sigma$ is torsion free.
Applying $-^\Sigma$ thus results in a chain of $\OO_Y$-modules again.
In case of \eqref{56} this yields
\[
(\Omega^q_Y)^\Sigma\supseteq (\Sigma\Omega^q_Y)^\Sigma\supseteq \Omega^q_Y(\log C)^\Sigma \supseteq\Omega^q_Y(D)^\Sigma\supseteq (\Sigma\Omega^q_Y(D))^\Sigma
\]
and, with Lemma \ref{86} and freeness of $\Omega^q_Y$ and $\Theta_Y^q$, the chain of $\OO_Y$-submodules of $\QQ_Y\otimes_{\OO_Y}\Theta_Y^q$
\[
\Sigma\Theta^q_Y\supseteq \Theta^q_Y\supseteq\Omega^q_Y(\log C)^\Sigma
\supseteq \I_C\Theta^q_Y\supseteq \Theta^q_Y(-D).
\]
For every $\delta\in\Omega^q(\log C)^\Sigma$ and $\xi\in \Omega^{q-k}$, $\frac{d\ul h}h\wedge\xi\in\Omega^q(\log C)$ by Proposition~\ref{53}, hence  
\[
\ideal{\delta,d\ul h\wedge\xi}=h\ideal{\delta,\frac{d\ul h}h\wedge\xi}\in h\Sigma=\I_C
\]
and $\delta\in\Der^q(-\log C)$.
With Lemma~\ref{55}, it follows that $\Omega^q_Y(\log C)^\Sigma=\Der^q(-\log C)$.

By the same reasoning $-^\Sigma$ applied to \eqref{57} yields a chain of $\OO_Y$-modules
\[
(\Sigma\Theta^q_Y)^\Sigma\subseteq(\Theta^q_Y)^\Sigma\subseteq\Der^q(-\log C)^\Sigma\subseteq(\Sigma\Theta^q_Y)(-D)^\Sigma\subseteq\Theta^q_Y(-D)^\Sigma
\]
that can be rewritten as the chain of $\OO_Y$-submodules of $\QQ_Y\otimes_{\OO_Y}\Omega_Y^q$
\[
\Omega^q_Y\subseteq\Sigma\Omega^q_Y\subseteq\Der^q(-\log C)^\Sigma\subseteq \Omega^q_Y(D)\subseteq\Sigma\Omega^q_Y(D).
\]
The missing equality $\Der^q(-\log C)^\Sigma=\Omega^q(\log C)$ follows from Lemmas~\ref{55} and \ref{98}.
\end{proof}

%%%%%%%%%%%%%%%%%%%%%%%%%%%%%%%%%%%%%%%%%%%%%%%%%%%%%%%%%%%%%%%%%%%%%%%%%%%%%%%
\subsection{Log forms along Cohen--Macaulay spaces}\label{83}
%%%%%%%%%%%%%%%%%%%%%%%%%%%%%%%%%%%%%%%%%%%%%%%%%%%%%%%%%%%%%%%%%%%%%%%%%%%%%%%

Let $X\subseteq Y$ be a reduced Cohen-Macaulay germ of codimension $k\ge2$.
Then $\OO_X=\OO_Y/\I_X$ where $\I_X:=\I_{X/Y}$ denotes the ideal $X\subseteq Y$.
There is a reduced complete intersection $C\subseteq Y$ of codimension $k$ such that $X\subseteq C$ and hence $\I_X\supseteq\I_C$ (see \cite[Prop. 4.2.1]{Pol16}).
Set $X':=\ol{C\setminus X}$ such that $C=X\cup X'$.
The link with \S\ref{65} is made by setting
\[
S:=\OO_C,\quad T:=\OO_X.
\]
By Lemma~\ref{91} condition \eqref{67} holds and
\begin{equation}\label{68}
\QQ_C=\prod_{\pp\in\Ass_{\OO_C}(\OO_X)}\OO_{X,\pp}\times\prod_{\pp\in\Ass_{\OO_C}(\OO_{X'})}\OO_{X',\pp}=\QQ_X\times\QQ_{X'}.
\end{equation}
This decomposition extends to differential forms as follows.

%%%%%%%%%%%%%%%%%%%%%%%%%%%%%%%%%%%%%%%%%%%%%%%%%%%%%%%%%%%%%%%%%%%%%%%%%%%%%%%

\begin{lem}\label{93}
We have $\QQ_Xd\I_C=\QQ_Xd\I_X\subseteq\QQ_X\otimes_{\OO_Y}\Omega_Y^1$
and hence
\[
\QQ_C\otimes_{\OO_C}\Omega_C^p=\QQ_X\otimes_{\OO_X}\Omega_X^p\oplus\QQ_{X'}\otimes_{\OO_{X'}}\Omega_{X'}^p.
\]
\end{lem}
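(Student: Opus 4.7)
The plan is to prove the inclusion and identity $\QQ_X d\I_C = \QQ_X d\I_X$ first, and then deduce the decomposition formally from \eqref{68} together with base change for exterior powers.

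For the first part, the inclusion $\QQ_X d\I_C \subseteq \QQ_X d\I_X$ is obvious from $\I_C \subseteq \I_X$. For the reverse, the key observation is that since $X$ and $C$ are both reduced and equidimensional of dimension $n-k$ with $X \subseteq C$, the subspace $X$ is a union of irreducible components of $C$. Therefore $\I_{X/C}:=\I_X/\I_C$ is supported on $X'$, so that
\[
\QQ_X \otimes_{\OO_C} \I_{X/C} = 0
\]
by \eqref{68} (the primes in $\Ass_{\OO_C}(\OO_X)$ correspond to components of $C$ lying in $X$, where $\I_{X/C}$ vanishes). Hence for every $f \in \I_X$ there exists $g \in \OO_C$ inducing a non-zero-divisor in $\OO_X$ with $gf \in \I_C$. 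In $\QQ_X \otimes_{\OO_Y} \Omega_Y^1$ the Leibniz rule gives $d(gf) = g\, df + f\, dg$, and since $f \in \I_X$ annihilates $\QQ_X$, the term $f\, dg$ vanishes there. Inverting $g$ in $\QQ_X$ yields $df = g^{-1} d(gf) \in \QQ_X d\I_C$, which proves the desired inclusion.

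For the decomposition, by \eqref{68} we distribute the tensor product over the direct sum to obtain
\[
\QQ_C \otimes_{\OO_C} \Omega_C^p = \QQ_X \otimes_{\OO_C} \Omega_C^p \oplus \QQ_{X'} \otimes_{\OO_C} \Omega_{X'}^p \otimes_{\OO_C}\text{-terms}.
\]
It then remains to identify $\QQ_X \otimes_{\OO_C} \Omega_C^p = \QQ_X \otimes_{\OO_X} \Omega_X^p$ (and analogously for $X'$). For $p = 1$, using the presentations $\Omega_C^1 = \OO_C \otimes_{\OO_Y} \Omega_Y^1 / \OO_C\, d\I_C$ and $\Omega_X^1 = \OO_X \otimes_{\OO_Y} \Omega_Y^1 / \OO_X\, d\I_X$, the first part immediately gives
\[
\QQ_X \otimes_{\OO_C} \Omega_C^1 = \frac{\QQ_X \otimes_{\OO_Y} \Omega_Y^1}{\QQ_X d\I_C} = \frac{\QQ_X \otimes_{\OO_Y} \Omega_Y^1}{\QQ_X d\I_X} = \QQ_X \otimes_{\OO_X} \Omega_X^1.
\]
For higher $p$ this passes to exterior powers via the general base-change identity $\bigwedge^p_A M \otimes_A B = \bigwedge^p_B (M \otimes_A B)$ applied twice (with $A = \OO_C, B = \QQ_X$ and $A = \OO_X, B = \QQ_X$).

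The only non-routine step is the equality $\QQ_X d\I_C = \QQ_X d\I_X$; the main subtlety there is the annihilator argument that allows one to drop the term $f\, dg$ in $\QQ_X \otimes_{\OO_Y} \Omega_Y^1$. Once this is in place, the splitting of $\Omega_C^p$ along $\QQ_C = \QQ_X \times \QQ_{X'}$ is purely formal.
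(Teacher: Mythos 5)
Your proof is correct and follows essentially the same route as the paper's: both reduce to the minimal primes of $X$, where $\I_X$ and $\I_C$ agree up to a non-zero-divisor, apply the Leibniz rule and kill the $\I_X\,dg$ term in $\QQ_X\otimes_{\OO_Y}\Omega_Y^1$, and then deduce the splitting from the presentation of $\Omega^1$ together with compatibility of exterior powers with base change. (One cosmetic point: the honest justification for producing $g$ with $gf\in\I_C$ is your parenthetical remark that $(\I_{X/C})_\pp=0$ at every minimal prime $\pp$ of $X$ — which is exactly the paper's localization step — rather than the literal vanishing of $\QQ_X\otimes_{\OO_C}\I_{X/C}$, which a priori only yields $gf\in\I_C+\I_X^2$.)
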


\begin{proof}
By \eqref{68} we may localize at $\pp\in\Ass_{\OO_C}(\OO_X)$.
We may further assume $p=1$ since exterior product commutes with extension of scalars.
Let $\pp\mapsto\qq$ under $\Spec(\OO_C)\to\Spec(\OO_Y)$.
Then $\I_{C,\qq}=\I_{X,\qq}$ by \eqref{68} and hence $u\I_X\subseteq\I_C$ for some $u\in\OO_Y\setminus\qq$.
By the Leibniz rule $u d\I_X\subseteq d\I_C+\I_Xdu$ and hence the first claim. 
Since $\Omega_C^1=\Omega_Y^1/(\OO_Yd\I_C+\I_C\Omega_Y^1)$ this yields $\Omega_{C,\pp}^1=\Omega_{X,\pp}^1$ and the second claim follows.
\end{proof}

%%%%%%%%%%%%%%%%%%%%%%%%%%%%%%%%%%%%%%%%%%%%%%%%%%%%%%%%%%%%%%%%%%%%%%%%%%%%%%%

The following fact is well-known (see \cite[(2.14)]{Sch16}); we only sketch a proof.

\begin{lem}\label{92}
The modules of regular differential $p$-forms on $X$ and $C$ are related by $\omega_X^p=\Hom_{\OO_C}(\OO_X,\omega_C^p)\subseteq\omega_C^p$.
\end{lem}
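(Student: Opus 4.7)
The plan is to derive the identity from a Grothendieck--Kersken-type description of regular meromorphic forms. For any reduced Cohen--Macaulay subspace $Z\subseteq Y$ of codimension $k$ one has the well-known identification
\[
\omega_Z^p\;\cong\;\Ext^k_{\OO_Y}(\OO_Z,\Omega_Y^{p+k}),
\]
which for $p=0$ specializes to the usual canonical module formula and which is established by Kersken and Kunz--Waldi in the theory of regular meromorphic differential forms. I would first record this identification as a black box for both $Z=X$ and $Z=C$.

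Next, I would apply the change-of-rings spectral sequence for the tower $\OO_Y\onto\OO_C\onto\OO_X$:
\[
E_2^{r,s}=\Ext^r_{\OO_C}\bigl(\OO_X,\Ext^s_{\OO_Y}(\OO_C,\Omega_Y^{p+k})\bigr)\;\Longrightarrow\;\Ext^{r+s}_{\OO_Y}(\OO_X,\Omega_Y^{p+k}).
\]
Because $C$ is a complete intersection of codimension $k$ in the regular local ring $\OO_Y$, the $\OO_Y$-module $\OO_C$ is resolved by the Koszul complex on $h_1,\dots,h_k$, giving $\Ext^s_{\OO_Y}(\OO_C,\Omega_Y^{p+k})=0$ for $s>k$; Ischebeck's Lemma~\ref{7} supplies the corresponding vanishing for $s<k$. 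The spectral sequence therefore collapses to the single row $s=k$ and yields
\[
\Ext^i_{\OO_Y}(\OO_X,\Omega_Y^{p+k})\;\cong\;\Ext^{i-k}_{\OO_C}(\OO_X,\omega_C^p).
\]
Setting $i=k$ identifies $\omega_X^p\cong\Hom_{\OO_C}(\OO_X,\omega_C^p)$, and the inclusion into $\omega_C^p$ is automatic: under this identification, $\Hom_{\OO_C}(\OO_X,\omega_C^p)=\Ann_{\omega_C^p}(\I_X\OO_C)\subseteq\omega_C^p$.

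The main obstacle is the initial Ext-characterization of $\omega_Z^p$: it is standard in the theory of regular meromorphic forms, but one must check that it agrees with the definition of $\omega_C^p$ coming out of the multi-logarithmic residue sequence~\eqref{123} and with the residue-theoretic definition of $\omega_X^p$ used later in~\S\ref{83}. Once that input is secured, the rest of the proof is a short, formal application of the spectral sequence above, which is presumably why the authors only sketch it.
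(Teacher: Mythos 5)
Your argument has a genuine gap at its very first step: the ``black box'' identification $\omega_Z^p\cong\Ext^k_{\OO_Y}(\OO_Z,\Omega_Y^{p+k})$ is false for $0\le p<n-k$. A rank count already rules it out: $\omega_Z^p$ sits inside $\QQ_Z\otimes_{\OO_Z}\Omega_Z^p$ and has generic rank $\binom{n-k}{p}$ over $\QQ_Z$, whereas $\Ext^k_{\OO_Y}(\OO_Z,\Omega_Y^{p+k})\cong\Omega_Y^{p+k}\otimes_{\OO_Y}\Ext^k_{\OO_Y}(\OO_Z,\OO_Y)$ is a direct sum of $\binom{n}{p+k}$ copies of the canonical module. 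Concretely, for a hypersurface $D=\set{h=0}$ and $p=0$ one gets $\Ext^1_{\OO_Y}(\OO_D,\Omega_Y^1)\cong\OO_D^n$, which is not the rank-one module $\omega_D^0$. The formula is correct only in top degree $p=n-k$, where $\Omega_Y^{p+k}=\Omega_Y^n$ and both sides are the canonical module. The correct Ext-theoretic description of regular meromorphic forms involves $\Omega_Z^{\dim Z-p}$ in the first argument (equivalently, $\omega_Z^p=\Hom_{\OO_Z}(\Omega_Z^{\dim Z-p},\omega_Z)$ up to torsion), and since $\Omega_X^{\bullet}$ and $\Omega_C^{\bullet}$ are different modules, your change-of-rings spectral sequence no longer applies verbatim.

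What your Ext module does compute is only half of Kersken's description, which is the starting point of the paper's proof: $\omega_X^p$ consists of residue symbols $\begin{bmatrix}\xi\\\ul h\end{bmatrix}$ with $\xi\in\Omega_Y^{p+k}$ subject to \emph{two} conditions, $\I_X\xi\subseteq\I_C\Omega_Y^{p+k}$ and $d\I_X\wedge\xi\subseteq\I_C\Omega_Y^{p+k+1}$, taken modulo $\I_C\Omega_Y^{p+k}$. The first condition alone yields $\Hom_{\OO_C}(\OO_X,\OO_C\otimes_{\OO_Y}\Omega_Y^{p+k})\cong\Ext^k_{\OO_Y}(\OO_X,\Omega_Y^{p+k})$, which is exactly your module; the second, differential, condition is where the actual content of the lemma lies. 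The paper handles it by observing that $\omega_X^p\subseteq\Hom_{\OO_C}(\OO_X,\omega_C^p)$ follows formally from Kersken's description, that equality inside $\omega_C^p$ may be checked at $\Ass(\OO_C)$, and that there $\QQ_Xd\I_C=\QQ_Xd\I_X$ (Lemma~\ref{93}), so the differential conditions relative to $\I_X$ and to $\I_C$ agree generically on $X$. Any repair of your approach would have to reintroduce precisely this comparison of $d\I_X$ with $d\I_C$; the purely homological collapse of the spectral sequence cannot see it.
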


\begin{proof}
Kersken explicitly describes (see \cite[(1.2)]{Ker84})
\begin{equation}\label{122}
\omega_X^p=\set{\begin{bmatrix}\xi\\\ul h\end{bmatrix}\xmid\xi\in\Omega_Y^{p+k},\ \I_X\xi\subseteq\I_C\Omega_Y^{p+k},\ d\I_X\wedge\xi\subseteq\I_C\Omega_Y^{p+k+1}}
\end{equation}
where $\begin{bmatrix}\xi\\\ul h\end{bmatrix}=0$ if and only if $\xi\in\I_C\Omega_Y^{p+k}$.
In particular, $\omega_X^p\subseteq\Hom_{\OO_C}(\OO_X,\omega_C^p)\subseteq\omega_C^p$ and equality in $\omega_C^p$ can be checked at $\Ass(\OO_C)$.
Lemma~\ref{93} yields the claim.
\end{proof}

%%%%%%%%%%%%%%%%%%%%%%%%%%%%%%%%%%%%%%%%%%%%%%%%%%%%%%%%%%%%%%%%%%%%%%%%%%%%%%%

The following modules of differential forms on $Y$ due to Aleksandrov (see \cite[Def.~10.1]{Ale14} and \cite[Def.~4.1.3]{Pol16}) are defined by the relations in \eqref{122}.

\begin{dfn}\label{96}
The module of \emph{multi-logarithmic differential $q$-forms on $Y$ along $X$ relative to $C$} is defined by
\begin{align*}
\Omega^q(\log X/C)=\Omega_Y^q(\log X/C)&:=\set{\omega\in \Omega^q_Y\xmid\I_X\omega\subseteq\I_C\Omega^q_Y,\ d\I_X\wedge\omega\subseteq\I_C\Omega^{q+1}_Y}(D)\\
&=\set{\omega\in\Omega^q_Y(D)\xmid\I_X\omega\subseteq\Sigma\Omega^q_Y,\ d\I_X\wedge\omega\subseteq\Sigma\Omega^{q+1}_Y}.
\end{align*}
Observe that
\[
\Sigma\Omega_Y^q\subseteq\Omega^q(\log X/C)\subseteq\Omega^q(\log C)
\]
with $\Omega^q(\log X/C)(-D)\subseteq\QQ_Y\otimes_{\OO_Y}\Omega_Y^q$ independent of $D$ (see \cite[Prop.~4.1.5]{Pol16}).
\end{dfn}

%%%%%%%%%%%%%%%%%%%%%%%%%%%%%%%%%%%%%%%%%%%%%%%%%%%%%%%%%%%%%%%%%%%%%%%%%%%%%%%

\begin{lem}\label{76}
There is an equality $\Omega^q(\log X/C)=\Sigma\Omega^q_Y:_{\Omega^q(\log C)}\I_X$.
In other words, $\Omega^q(\log X/C)(-D)=\I_X\Omega^q_Y:_{\Omega^q(\log C)}\I_X$.
\end{lem}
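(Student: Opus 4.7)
The strategy is to prove the two inclusions of the main identity $\Omega^q(\log X/C) = \Sigma\Omega_Y^q :_{\Omega^q(\log C)} \I_X$ separately, with the ``in other words'' reformulation following by tensoring with $\OO_Y(-D)$.

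The forward inclusion is essentially tautological: if $\omega \in \Omega^q(\log X/C)$, then Definition~\ref{96} already contains the colon condition $\I_X\omega \subseteq \Sigma\Omega_Y^q$, and membership $\omega \in \Omega^q(\log C)$ follows from $\I_C \subseteq \I_X$, which forces $dh_i \wedge \omega \in d\I_X \wedge \omega \subseteq \Sigma\Omega_Y^{q+1}$ for every generator $h_i$ of $\I_C$; this matches the second description in Definition~\ref{52}.

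For the reverse inclusion, assume $\omega \in \Omega^q(\log C)$ with $\I_X\omega \subseteq \Sigma\Omega_Y^q$; the real task is to show $d\I_X \wedge \omega \subseteq \Sigma\Omega_Y^{q+1}$. The conceptual tool is the multi-logarithmic residue map $\res_C^q$ of Proposition~\ref{53}. Being $\OO_Y$-linear with kernel $\Sigma\Omega_Y^q$, it translates the colon condition into $\I_X \cdot \res_C^q(\omega) = \res_C^q(\I_X\omega) = 0$, and Lemma~\ref{92} places $\res_C^q(\omega)$ in $\Ann_{\omega_C^{q-k}}(\I_X) = \omega_X^{q-k}$. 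By Kersken's explicit description~\eqref{122} of $\omega_X^{q-k}$, there is a representative $\tilde\xi \in \Omega_Y^q$ with $\res_C^q(\omega) = [\tilde\xi/\ul h]$ and satisfying both $\I_X\tilde\xi \subseteq \I_C\Omega_Y^q$ and---crucially---$d\I_X \wedge \tilde\xi \subseteq \I_C\Omega_Y^{q+1}$. Using Remark~\ref{59} together with the representation $g\omega = \frac{d\ul h}{h}\wedge\xi + \eta$ from Proposition~\ref{53}, the class $g \cdot \res_C^q(\omega) \in \omega_C^{q-k}$ is represented both by $g\tilde\xi$ and by $gh\omega = d\ul h \wedge \xi + h\eta$ (the latter is integral), which yields $gh\omega - g\tilde\xi = \sum_j h_j\zeta_j$ for some $\zeta_j \in \Omega_Y^q$. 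For any $f \in \I_X$, the Leibniz rule gives $df \wedge (gh\omega - g\tilde\xi) = \sum_j h_j\, df\wedge\zeta_j \in \I_C\Omega_Y^{q+1}$, while Kersken's condition on $\tilde\xi$ gives $df\wedge g\tilde\xi = g(df\wedge\tilde\xi) \in g\I_C\Omega_Y^{q+1} \subseteq \I_C\Omega_Y^{q+1}$. Hence $gh(df\wedge\omega) = df\wedge(gh\omega) \in \I_C\Omega_Y^{q+1}$. Since $g$ acts as a non-zero divisor on the free $\OO_C$-module $\Omega_Y^{q+1}/\I_C\Omega_Y^{q+1}$, dividing by $g$ yields $h(df\wedge\omega) \in \I_C\Omega_Y^{q+1}$, that is, $df\wedge\omega \in \Sigma\Omega_Y^{q+1}$.

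The main obstacle I anticipate is the bookkeeping around the non-zero divisor $g$ of Proposition~\ref{53}: the canonical ``simple pole'' representative $h\omega$ need not lie in $\Omega_Y^q$ integrally for a general $\omega \in \Omega^q(\log C)$, so the identification of $h\omega$ with Kersken's $\tilde\xi$ must pass through $gh\omega \in \Omega_Y^q$ and be descended via the NZD property of $g$ modulo $\I_C$. Beyond this, the argument is a direct combination of $\OO_Y$-linearity of $\res_C^q$, Lemma~\ref{92}, Kersken's representation, and the Leibniz rule.
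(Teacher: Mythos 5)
Your proof is correct, and for the crucial reverse inclusion it takes a genuinely different route from the paper's. The paper stays entirely at the level of meromorphic forms on $C$: from $\I_X\res_C^q(\omega)=0$ it invokes Lemma~\ref{93} to place $\res_C^q(\omega)$ in the summand $\QQ_X\otimes_{\OO_X}\Omega_X^{q-k}$ of $\QQ_C\otimes_{\OO_C}\Omega_C^{q-k}$, where every form in $d\I_X$ vanishes, and then uses the compatibility $\res_C^{q+1}(d\I_X\wedge\omega)=d\I_X\wedge\res_C^q(\omega)$ to get $d\I_X\wedge\omega\subseteq\ker(\res_C^{q+1})=\Sigma\Omega_Y^{q+1}$. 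You instead land $\res_C^q(\omega)$ in $\omega_X^{q-k}$ via Lemma~\ref{92}, lift it to a Kersken representative $\tilde\xi$ through \eqref{122}, and transport the defining property $d\I_X\wedge\tilde\xi\subseteq\I_C\Omega_Y^{q+1}$ back to $\omega$ by the symbol comparison $gh\omega-g\tilde\xi\in\I_C\Omega_Y^q$ and cancellation of the non-zero-divisor $g$ modulo $\I_C$. Both arguments are sound and both hinge on Proposition~\ref{53}; the paper's is shorter but leaves the wedge-compatibility of the residue map implicit, whereas yours avoids that property altogether at the cost of explicit residue-symbol bookkeeping upstairs in $\Omega_Y^\bullet$, every step of which is covered by facts stated in the paper (Remark~\ref{59}, Lemma~\ref{92}, \eqref{122}).

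Two small remarks. First, your anticipated obstacle is not actually there: $\Omega^q(\log C)\subseteq\Omega_Y^q(D)$ by Definition~\ref{52}, so $h\omega\in\Omega_Y^q$ integrally; your detour through $gh\omega$ is harmless but unnecessary for that reason. Second, tensoring the main equality with $\OO_Y(-D)$ yields $\Omega^q(\log X/C)(-D)=\I_C\Omega^q_Y:_{\Omega^q(\log C)(-D)}\I_X$, which is the form of the statement actually used later (it is $M_J=IF:_MJ$ in \eqref{114}). This is not literally the displayed reformulation, whose numerator $\I_X\Omega_Y^q$ and ambient module $\Omega^q(\log C)$ appear to be a misprint: already for $X=C$ and $k\le q<n$ the displayed colon module contains all of $\Omega_Y^q$ (the colon condition is vacuous on integral forms), while $\Omega^q(\log C)(-D)$ does not. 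So your ``tensor by $\OO_Y(-D)$'' derivation proves the intended statement, not the misprinted one.
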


\begin{proof}
There are obvious inclusions
\[
\Sigma\Omega^q_Y\subseteq \Omega^q(\log X/C)\subseteq\Sigma\Omega^q_Y:_{\Omega^q(\log C)}\I_X\subseteq \Omega^q(\log C).
\]
By Proposition~\ref{53} and Lemma~\ref{93}
\begin{align*}
\omega\in\Sigma\Omega^q_Y:_{\Omega^q(\log C)}\I_X
&\implies\I_X\res_C^q(\omega)\subseteq\res_C^q(\Sigma\Omega^q_Y)=0\\
&\implies\res_C^q(\omega)\in\QQ_X\otimes_{\OO_X}\Omega_X^{q-k}\\
&\implies0=d\I_X\wedge\res_C^q(\omega)=\res_C^{q+1}(d\I_X\wedge\omega)\\
&\implies d\I_X\wedge\omega\subseteq \Sigma\Omega^{q+1}_Y\\
&\implies\omega\in\Omega^q(\log X/C).\qedhere
\end{align*}
\end{proof}

%%%%%%%%%%%%%%%%%%%%%%%%%%%%%%%%%%%%%%%%%%%%%%%%%%%%%%%%%%%%%%%%%%%%%%%%%%%%%%%

The idea of Remark~\ref{59} is used by Aleksandrov (see \cite[\S10]{Ale14}) to define multi-logarithmic residues along $X$ as the restriction of those along $C$.
The bottom sequence of the diagram in the following Proposition~\ref{95} appears in his work (see \cite[Thm.~10.2]{Ale14}); Pol proved exactness on the right (see \cite[Prop.~4.1.21]{Pol16}).
An alternative argument is suggested by \S\ref{65}.
The following data
\begin{equation}\label{113}
R:=\OO_Y,\quad I:=\I_C,\quad J:=\I_X,\quad F:=\Omega_Y^q,\quad M:=\Omega^q(\log C)(-D),\quad\rho:=\frac 1h\res_C^q
\end{equation}
give rise to an $I$-free approximation~\eqref{5} with $J$-restriction~\eqref{108}.
By Corollary~\ref{125} $W=0$ if $q<k$ and \eqref{5} is trivial for $q=n$.
We are therefore concerned with the case $k\le q<n$.
By Lemmas~\ref{92} and \ref{76} (see Definition~\ref{39} and \eqref{81})
\begin{equation}\label{114}
W_T=\omega_X^{q-k},\quad M_J=\Omega^q(\log X/C)(-D).
\end{equation}
Now twisting diagram~\eqref{40} by $D$ yields the following result.

\begin{prp}\label{95}
Applying $\Ext_{\OO_Y}^1(\omega_X^{q-k}\into\omega_C^{q-k},\Sigma\Omega^q_Y)$ to the multi-logarithmic residue sequence~\eqref{123} yields a commutative diagram with exact rows and cartesian right square
\begin{equation}\label{110}
\xymatrix{
0\ar[r] & \Sigma\Omega^q_Y\ar[r]\ar@{=}[d] & \Omega^q(\log C)\ar[r]^-{\res_C^q} & \omega_C^{q-k}\ar[r] & 0\\
0\ar[r] & \Sigma\Omega^q_Y\ar[r] & \Omega^q(\log X/C)\ar@{^{(}->}[u]\ar[r]^-{\res_{X/C}^q} & \omega_X^{q-k}\ar@{^{(}->}[u]\ar[r] & 0
}
\end{equation}
where $\omega_X^p$ is the module of regular meromorphic $p$-forms on $X$.\qed
\end{prp}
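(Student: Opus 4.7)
The plan is to recognize~\eqref{123} as an $I$-free approximation in the sense of Definition~\ref{8} and invoke the general $J$-restriction construction of §\ref{65}. Equivalently, the statement can be read as a pullback assertion: applying $\Ext_{\OO_Y}^1(-,\Sigma\Omega_Y^q)$ to the inclusion $\omega_X^{q-k}\hookrightarrow\omega_C^{q-k}$ sends the class of~\eqref{123} to its pullback, whose representing short exact sequence has middle term the fiber product $\Omega^q(\log C)\times_{\omega_C^{q-k}}\omega_X^{q-k}$. The commutativity, exactness, and cartesian right square are then formal consequences of the pullback construction; the content lies in identifying this fiber product with $\Omega^q(\log X/C)$.

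Concretely I would first twist~\eqref{123} by $-D$: using $\Sigma(-D)=\I_C$ and the canonical identification $\omega_C^{q-k}(-D)\cong\omega_C^{q-k}$ as $\OO_Y$-modules, this produces an $I$-free approximation of $M=\Omega^q(\log C)(-D)$ with $W=\omega_C^{q-k}$, matching the data~\eqref{113}. Lemma~\ref{91}, applied with $S=\OO_C$ and $T=\OO_X$ (using that $X$ is reduced Cohen--Macaulay of dimension equal to $\dim C$), verifies hypothesis~\eqref{67}. Next I would establish the identifications~\eqref{114}: by Lemma~\ref{92}, $W_T=\Hom_{\OO_C}(\OO_X,\omega_C^{q-k})=\omega_X^{q-k}$; and by the formula~\eqref{81} together with Lemma~\ref{76},
\[
M_J=\I_C\Omega_Y^q:_{\Omega^q(\log C)(-D)}\I_X=\Omega^q(\log X/C)(-D).
\]
Substituting these into diagram~\eqref{40} yields a commutative diagram with exact rows and cartesian right square, which twisted back by $D$ (an exact equivalence of categories) is precisely~\eqref{110}; the induced bottom map $\res_{X/C}^q$ is the restriction of $\res_C^q$.

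The main difficulty is the bookkeeping of the $D$-twists, since the framework of §\ref{65} is developed without twists while the statement~\eqref{110} displays~\eqref{123} untwisted. One has to check that the canonical identifications $\omega_C^{q-k}\cong\omega_C^{q-k}(-D)$ and $\omega_X^{q-k}\cong\omega_X^{q-k}(-D)$ intertwine the $J$-restriction of the twisted approximation with the pullback construction stated in the proposition, and that the cartesian property of the right square is preserved by twisting. The substantive identification — that the middle term of the pullback is $\Omega^q(\log X/C)$ — reduces via Lemma~\ref{92} to the equivalence $\res_C^q(\omega)\in\omega_X^{q-k}\iff\I_X\omega\subseteq\Sigma\Omega_Y^q$, which is exactly the content of Lemma~\ref{76}.
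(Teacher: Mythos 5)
Your proposal is correct and follows essentially the same route as the paper: the paper likewise instantiates the $J$-restriction machinery of \S\ref{65} with the data~\eqref{113}, uses Lemmas~\ref{92} and~\ref{76} to obtain the identifications~\eqref{114}, and concludes by twisting diagram~\eqref{40} by $D$. Your extra care about the $D$-twist bookkeeping is a reasonable elaboration of a step the paper treats as immediate, but it does not change the argument.
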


%%%%%%%%%%%%%%%%%%%%%%%%%%%%%%%%%%%%%%%%%%%%%%%%%%%%%%%%%%%%%%%%%%%%%%%%%%%%%%%
\subsection{Higher log vector fields and Jacobian modules}\label{97}
%%%%%%%%%%%%%%%%%%%%%%%%%%%%%%%%%%%%%%%%%%%%%%%%%%%%%%%%%%%%%%%%%%%%%%%%%%%%%%%

Pol gives a description of $\res_{X/C}^q$ preserving the analogy with the definition of $\res_C^q$ in Proposition~\ref{53} (see \cite[\S4.2.1]{Pol16}).
As suggested by Remark~\ref{59} the role of $\frac{dh}h\in\Omega^k(\log C)$ is played by a preimage $\frac{\alpha_X}h\in\Omega^k(\log X/C)$ of the fundamental form $\begin{bmatrix}\alpha_X\\\ul h\end{bmatrix}\in\omega_X^0$ of $X$ (see \cite[\S5]{Ker83b}).

\begin{dfn}\label{100}
Let $\textbf{1}_X:=(1,0)\in\QQ_X\times\QQ_{X'}=\QQ_C$ (see Lemma~\ref{93}).
A \emph{fundamental form of $X$ in $Y$} is an $\alpha_X=\alpha_{X/C/Y}\in\Omega^k_Y$ such that $\ol{\alpha_X}=\ol{\textbf{1}_Xd\ul h}\in\QQ_C\otimes_{\OO_Y}\Omega_Y^k$.
\end{dfn}

%%%%%%%%%%%%%%%%%%%%%%%%%%%%%%%%%%%%%%%%%%%%%%%%%%%%%%%%%%%%%%%%%%%%%%%%%%%%%%%

Such a fundamental form exists and the explicit description of multi-logarithmic differential forms in Proposition~\ref{53} generalizes verbatim (see \cite[Prop.~4.2.6]{Pol16}).

\begin{prp}\label{87}
An element $\omega\in\Omega^q_Y(D)$ lies in $\Omega^q(\log X/C)$ if and only if there exist $g\in \OO_Y$ inducing a non zero-divisor in $\OO_C$, $\xi\in \Omega^{q-k}_Y$ and $\eta\in\Sigma\Omega^q_Y$ such that
\[
g\omega=\frac{\alpha_X}{h}\wedge \xi+\eta
\]
and the map $\res_{X/C}^q$ in \eqref{110} is defined by $\res_{X/C}^q(\omega)=\frac\xi g$.\qed
\end{prp}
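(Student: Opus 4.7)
The plan is to mimic the proof of Proposition~\ref{53} with $\frac{d\ul h}{h}$ systematically replaced by $\frac{\alpha_X}{h}$, transporting the argument through the defining identity $\ol{\alpha_X}=\ol{\textbf{1}_X d\ul h}$ in $\QQ_C\otimes_{\OO_Y}\Omega_Y^k$. The crucial preliminary step is to verify that $\frac{\alpha_X}{h}\in\Omega^k(\log X/C)$, i.e.\ $\I_X\alpha_X\subseteq\I_C\Omega_Y^k$ and $d\I_X\wedge\alpha_X\subseteq\I_C\Omega_Y^{k+1}$. Since $\OO_C$ is Cohen--Macaulay, $\OO_C\otimes_{\OO_Y}\Omega_Y^\bullet$ is a free $\OO_C$-module with no embedded associated primes, so both containments can be tested componentwise under the decomposition $\QQ_C=\QQ_X\times\QQ_{X'}$ from \eqref{68}. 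With $\textbf{1}_X=(1,0)$ and $\ol{\alpha_X}=\ol{\textbf{1}_X d\ul h}$, the $\QQ_X$-component of $\I_X\alpha_X$ vanishes because $\I_X$ dies in $\OO_X$, while the $\QQ_{X'}$-component vanishes because the second factor of $\textbf{1}_X$ is zero; the same split handles $d\I_X\wedge\alpha_X$ once one invokes $\QQ_X\cdot d\I_X=\QQ_X\cdot d\I_C$ from Lemma~\ref{93} together with $d\I_C\wedge d\ul h\subseteq\I_C\Omega_Y^{k+1}$ (Leibniz).

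With this in hand, the \enquote{if} direction is straightforward: any representation $g\omega=\frac{\alpha_X}{h}\wedge\xi+\eta$ places $g\omega$ in $\Omega^q(\log X/C)$, since $\frac{\alpha_X}{h}\wedge\xi\in\Omega^q(\log X/C)$ by the preliminary step combined with the closure $\Omega^k(\log X/C)\wedge\Omega_Y^{q-k}\subseteq\Omega^q(\log X/C)$, and $\eta\in\Sigma\Omega_Y^q\subseteq\Omega^q(\log X/C)$. The descent from $g\omega$ to $\omega$ then follows from Lemma~\ref{76} and the fact that $g$ acts injectively on the free $\OO_C$-module $\OO_C\otimes_{\OO_Y}\Omega_Y^q$ in which the defining relations of $\Omega^q(\log X/C)$ are checked.

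For the \enquote{only if} direction, start from Proposition~\ref{53}: it furnishes $g_0\in\OO_Y$ inducing a non-zero-divisor on $\OO_C$, $\xi_0\in\Omega_Y^{q-k}$ and $\eta_0\in\Sigma\Omega_Y^q$ with $g_0\omega=\frac{d\ul h}{h}\wedge\xi_0+\eta_0$ and $\res_C^q(\omega)=\xi_0/g_0$. Proposition~\ref{95} places this residue in $\omega_X^{q-k}\subseteq\omega_C^{q-k}$, so its $\QQ_{X'}$-component vanishes. Writing the characteristic idempotent as $\textbf{1}_X=a/u$ with $u\in\OO_Y$ a non-zero-divisor on $\OO_C$ and $a\in\OO_Y$ (clearing denominators in $\QQ_C=\QQ_X\times\QQ_{X'}$), the defining identity of $\alpha_X$ gives $u\alpha_X\equiv a\,d\ul h\pmod{\I_C\Omega_Y^k}$, and the vanishing of the $\QQ_{X'}$-component of $\xi_0/g_0$ furnishes $\xi\in\Omega_Y^{q-k}$ with $a\xi_0\equiv ug_0\xi\pmod{\I_C\Omega_Y^{q-k}}$. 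Multiplying the $g_0\omega$-identity by $ua$ and regrouping modulo $\I_C\Omega_Y^q$ then yields $g\omega=\frac{\alpha_X}{h}\wedge\xi+\eta$ with $g=ug_0$, and the equality $\res_{X/C}^q(\omega)=\xi/g$ is forced by commutativity of diagram~\eqref{110}.

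The hard step is the bookkeeping in the last paragraph: one must choose $u$ and $a$ so that the two congruences---for $\alpha_X$ and for $\xi_0$---hold simultaneously with the same denominator, and then verify that the collected error terms modulo $\I_C\Omega_Y^q$ reassemble into a single $\eta\in\Sigma\Omega_Y^q$ rather than merely into something in $\Omega_Y^q(D)$. This reduces to the standard fact that $\textbf{1}_X\in\QQ_C$ admits a common-denominator presentation with a non-zero-divisor of $\OO_C$ in the denominator, but care is needed to avoid introducing spurious poles along $D$.
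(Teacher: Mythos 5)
The paper itself gives no argument for Proposition~\ref{87}; it is quoted from Pol \cite[Prop.~4.2.6]{Pol16} and stated with its end-of-proof mark already attached. Your preliminary step (that $\frac{\alpha_X}{h}\in\Omega^k(\log X/C)$, checked componentwise in $\QQ_C=\QQ_X\times\QQ_{X'}$ using Lemma~\ref{93} and $\ol{\alpha_X}=\ol{\textbf{1}_Xd\ul h}$) and your \enquote{if} direction are both correct.

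The \enquote{only if} direction, however, has a genuine gap at exactly the step you flag as the hard one. Writing $\textbf{1}_X=a/u$ with $u$ a non-zero-divisor on $\OO_C$ forces $\ol a=\ol u\,\textbf{1}_X$ to vanish on every component of $X'$, so $a$ is a zero-divisor on $\OO_C$ whenever $X\subsetneq C$; multiplying $g_0\omega=\frac{d\ul h}{h}\wedge\xi_0+\eta_0$ through by $ua$ therefore yields a prefactor $uag_0$ that is not admissible as the $g$ of the statement. Your attempted repair --- producing $\xi\in\Omega_Y^{q-k}$ with $a\xi_0\equiv ug_0\xi\pmod{\I_C\Omega_Y^{q-k}}$ so that $g=ug_0$ suffices --- does not follow from the vanishing of the $\QQ_{X'}$-component of $\xi_0/g_0$: it would amount to $\ol{a\xi_0}/\ol{ug_0}=\textbf{1}_X\,\ol{\xi_0}/\ol{g_0}$ being the class of an honest element of $\Omega_Y^{q-k}$, i.e.\ to $\res_C^q(\omega)$ being a regular form, which fails as soon as $X$ is singular since $\omega_X^{q-k}$ properly contains the image of $\Omega_Y^{q-k}$ in general. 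The denominator bookkeeping can be avoided entirely: keep the $g_0$ and $\xi_0$ furnished by Proposition~\ref{53} and compute residues. From your congruence $u\alpha_X\equiv a\,d\ul h\pmod{\I_C\Omega_Y^k}$ one gets $\res_C^q\bigl(\frac{\alpha_X}{h}\wedge\xi_0\bigr)=\ol a\,\ol{\xi_0}/\ol u=\textbf{1}_X\ol{\xi_0}$, while $\res_C^q(g_0\omega)=\ol{g_0}\res_C^q(\omega)=\ol{\xi_0}$; hence $\res_C^q\bigl(g_0\omega-\frac{\alpha_X}{h}\wedge\xi_0\bigr)=\textbf{1}_{X'}\ol{\xi_0}=\ol{g_0}\,\textbf{1}_{X'}\res_C^q(\omega)=0$, because $\res_C^q(\omega)\in\omega_X^{q-k}$ is killed by $\I_X$ and so has vanishing $\QQ_{X'}$-component. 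Exactness of \eqref{123} then gives $g_0\omega-\frac{\alpha_X}{h}\wedge\xi_0\in\Sigma\Omega_Y^q$, so the desired representation holds with the original $g_0$ and $\xi_0$, and $\res_{X/C}^q(\omega)=\xi_0/g_0$ follows from diagram~\eqref{110}.
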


%%%%%%%%%%%%%%%%%%%%%%%%%%%%%%%%%%%%%%%%%%%%%%%%%%%%%%%%%%%%%%%%%%%%%%%%%%%%%%%

In the same spirit we extend Definition~\ref{54}.
We start with the first option as definition.

\begin{dfn}\label{101}
We define the module of \emph{multi-logarithmic $q$-vector fields on $Y$ along $X$} by
\[
\Der^q(-\log X)=\Der_Y^q(-\log X):=\set{\delta\in\Theta^q_Y\xmid\ideal{\delta,\wedge^kd\I_X\wedge\Omega^{q-k}_Y}\subseteq\I_X}.
\]
\end{dfn}

%%%%%%%%%%%%%%%%%%%%%%%%%%%%%%%%%%%%%%%%%%%%%%%%%%%%%%%%%%%%%%%%%%%%%%%%%%%%%%%

The following result completes the analogy with Definition~\ref{54}.
In particular $\Der^k(-\log X)$ is Pol's module $\Der^k(-\log X/C)$ (see \cite[Def.~4.2.8]{Pol16}) which is thus independent of $C$.

\begin{lem}\label{102}
We have
\begin{align*}
\Der^q(-\log C)&\subseteq\set{\delta \in \Theta^q_Y\xmid\ideal{\delta,\alpha_X\wedge\Omega^{q-k}_Y}\subseteq\I_X}
=\Der^q(-\log X)\\
&=\set{\delta \in \Theta^q_Y\xmid\ideal{\delta,\alpha_X\wedge\Omega^{q-k}_Y}\subseteq\I_C}.
\end{align*}
\end{lem}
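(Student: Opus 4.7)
The plan is to first unpack the fundamental form condition $\ol{\alpha_X} = \ol{\mathbf{1}_X d\ul h}$ in $\QQ_C\otimes_{\OO_Y}\Omega_Y^k$ via Lemma~\ref{93}. Write $\QQ_C\otimes_{\OO_Y}\Omega_Y^k = \QQ_X\otimes_{\OO_Y}\Omega_Y^k\oplus\QQ_{X'}\otimes_{\OO_Y}\Omega_Y^k$ and project onto each factor. Since $\Omega_Y^k$ is free and $\OO_X\into\QQ_X$, $\OO_{X'}\into\QQ_{X'}$ (both being reduced), comparing coefficients of $dx_{\ul j}$ on the two sides yields
\begin{itemize}
\item[(i)] $\alpha_X - d\ul h \in \I_X\Omega_Y^k$ (from the $\QQ_X$-factor, where $\mathbf{1}_X$ acts as $1$), and
\item[(ii)] $\alpha_X \in \I_{X'}\Omega_Y^k$ (from the $\QQ_{X'}$-factor, where $\mathbf{1}_X$ acts as $0$).
\end{itemize}
I will also use the identity $\I_C = \I_X\cap\I_{X'}$, which holds because $C = X\cup X'$ is a reduced decomposition into unions of components.

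Three of the four required inclusions follow immediately from (i), (ii). For $\Der^q(-\log C)\subseteq\set{\delta \mid \ideal{\delta,\alpha_X\wedge\Omega_Y^{q-k}}\subseteq\I_X}$: decompose $\ideal{\delta,\alpha_X\wedge\xi} = \ideal{\delta, d\ul h\wedge\xi} + \ideal{\delta,(\alpha_X - d\ul h)\wedge\xi}$; the first summand lies in $\I_C\subseteq\I_X$ by Definition~\ref{54} and the second in $\I_X$ by (i). The inclusion $\Der^q(-\log X)\subseteq\set{\delta \mid \ideal{\delta,\alpha_X\wedge\Omega_Y^{q-k}}\subseteq\I_X}$ is analogous, noting that $h_i\in\I_X$ so $d\ul h\in\wedge^k d\I_X$, making $\ideal{\delta,d\ul h\wedge\xi}\in\I_X$ by Definition~\ref{101}. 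Next, $\Der^q(-\log X)\subseteq\set{\delta \mid \ideal{\delta,\alpha_X\wedge\Omega_Y^{q-k}}\subseteq\I_C}$ follows by combining the previous inclusion with (ii), which gives $\ideal{\delta,\alpha_X\wedge\xi}\in\I_{X'}$, hence membership in $\I_X\cap\I_{X'}=\I_C$. Finally, $\set{\delta \mid \ideal{\delta,\alpha_X\wedge\Omega_Y^{q-k}}\subseteq\I_C}\subseteq\set{\delta \mid \ideal{\delta,\alpha_X\wedge\Omega_Y^{q-k}}\subseteq\I_X}$ is trivial since $\I_C\subseteq\I_X$.

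The main obstacle is the remaining inclusion $\set{\delta \mid \ideal{\delta,\alpha_X\wedge\Omega_Y^{q-k}}\subseteq\I_X}\subseteq\Der^q(-\log X)$: given such $\delta$ and any $g_1,\dots,g_k\in\I_X$, $\xi\in\Omega_Y^{q-k}$, we must show $\ideal{\delta,dg_1\wedge\dots\wedge dg_k\wedge\xi}\in\I_X$. Since $\OO_X$ is Cohen--Macaulay, $\Ass(\OO_X)=\Min(\OO_X)$ and $\OO_X$ embeds in $\prod_{\pp\in\Ass(\OO_X)}\OO_{X,\pp}$, so membership in $\I_X$ can be tested at each generic point $\qq$ of $X$. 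At such a $\qq$, the local ring $\OO_{Y,\qq}$ is regular of dimension $k$; because $C$ is reduced, $\I_{C,\qq}$ is a reduced ideal of height $k$ generated by the regular sequence $\ul h$, hence $\I_{C,\qq} = \sqrt{\I_{C,\qq}} = \mm_\qq = \I_{X,\qq}$. For each $g_i\in\I_X$, writing $g_i = \sum_j a_{ij}h_j$ in $\OO_{Y,\qq}$ gives $dg_i\equiv\sum_j a_{ij}\,dh_j\pmod{\I_{X,\qq}\Omega_{Y,\qq}^1}$, hence
\[
dg_1\wedge\dots\wedge dg_k\equiv\det(a_{ij})\,d\ul h\pmod{\I_{X,\qq}\Omega_{Y,\qq}^k}.
\]
Localizing (i) yields $\alpha_{X,\qq}\equiv d\ul h\pmod{\I_{X,\qq}\Omega_{Y,\qq}^k}$, so both $dg_1\wedge\dots\wedge dg_k$ and $\alpha_{X,\qq}$ represent the same class modulo $\I_{X,\qq}\Omega_{Y,\qq}^k$ up to the unit-or-zero factor $\det(a_{ij})$. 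Pairing with $\delta$ against $\xi$ and invoking the hypothesis $\ideal{\delta,\alpha_X\wedge\xi}\in\I_X\subseteq\I_{X,\qq}$ yields $\ideal{\delta,dg_1\wedge\dots\wedge dg_k\wedge\xi}\in\I_{X,\qq}$ at every generic $\qq$ of $X$, completing the proof.
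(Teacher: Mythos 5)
Your proof is correct and follows essentially the same route as the paper: both arguments reduce everything to the generic points of $X$, where $\alpha_X\equiv d\ul h$ and $\I_C$, $\I_X$ agree, and both handle the second equality via $\alpha_X\in\I_{X'}\Omega_Y^k$ together with $\I_C=\I_X\cap\I_{X'}$. The only difference is presentational: the paper works uniformly in $\QQ_X$ and cites Lemma~\ref{93} for $\QQ_Xd\I_C=\QQ_Xd\I_X$, whereas you re-derive that fact by explicit localization at the minimal primes and the determinant computation with the matrix $(a_{ij})$.
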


\begin{proof}
By Definition~\ref{100} $\ol{\alpha_X}=\ol{\textbf{1}_Xd\ul h}=\ol{d\ul h}\in\QQ_X\otimes_{\OO_Y}\Omega^k_Y$.
For $\delta\in\Theta_Y^q$ and $\xi\in\Omega^{q-k}_Y$
\[
\ideal{\delta,\alpha_X\wedge\xi}\in\I_X
\iff0
=\ol{\ideal{\delta,\alpha_X\wedge\xi}}
=\ideal{\ol\delta,\ol{\alpha_X}\wedge\ol\xi}
=\ideal{\ol\delta,\ol{d\ul h}\wedge\ol\xi}
=\ol{\ideal{\delta,d\ul h\wedge\xi}}\in\QQ_X
\]
where $\ol\delta\in\QQ_X\otimes_{\OO_Y}\Theta_Y^q$ and $\ol\xi\in\QQ_X\otimes_{\OO_Y}\Omega^{q-k}_Y$.
The claimed inclusion follows.
Using the Leibniz rule and that $\QQ_Xd\I_C=\QQ_Xd\I_X\subseteq\QQ_X\otimes_{\OO_Y}\Omega_Y^1$ by Lemma~\ref{93} 
\begin{align*}
0=\ideal{\ol\delta,\ol{d\ul h}\wedge\ol\xi}\in\QQ_X
&\iff0
=\ideal{\ol\delta,\wedge^k\ol{d\I_C}\wedge\ol\xi}=\ideal{\ol\delta,\wedge^k\ol{d\I_X}\wedge\ol\xi}=\ol{\ideal{\delta,\wedge^kd\I_X\wedge\xi}}\subseteq\QQ_X\\
&\iff\ideal{\delta,\wedge^kd\I_X\wedge\xi}\subseteq\I_X.
\end{align*}
This proves the first equality.
With $\I_C=\I_X\cap\I_{X'}$ the second equality follows from $\alpha_X\in\I_{X'}\Omega_Y^k$ (see \cite[Prop. 4.2.5]{Pol16}).
\end{proof}

%%%%%%%%%%%%%%%%%%%%%%%%%%%%%%%%%%%%%%%%%%%%%%%%%%%%%%%%%%%%%%%%%%%%%%%%%%%%%%%

Using Proposition~\ref{87} and Lemma~\ref{102} we obtain the following analogue of Lemma~\ref{55} and of the equality $\Der^q(-\log C)=\Omega^q(\log C)^\Sigma$ from Proposition~\ref{77}.

\begin{lem}\label{103}
For $\delta\in \Der^q(-\log X)$ and $\omega\in \Omega^q(\log X/C)$ we have $\ideal{\delta,\omega}\in\Sigma$.\qed
\end{lem}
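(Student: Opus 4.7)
The plan is to mimic the proof of Lemma~\ref{55} almost verbatim, replacing $\frac{d\ul h}{h}$ by the fundamental form preimage $\frac{\alpha_X}{h}$ and invoking Lemma~\ref{102} to handle the contraction of $\delta$ against $\alpha_X$. The statement itself is a compatibility between the pairing~\eqref{115} and the filtrations defining $\Der^q(-\log X)$ and $\Omega^q(\log X/C)$, so the only real ingredients available are the explicit description of $\omega$ from Proposition~\ref{87} and the equivalent formulations of the logarithmic vector field condition from Lemma~\ref{102}.

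First, I would apply Proposition~\ref{87} to write
\[
g\omega = \frac{\alpha_X}{h}\wedge\xi + \eta
\]
with $g\in\OO_Y$ inducing a non zero-divisor in $\OO_C$, $\xi\in\Omega^{q-k}_Y$, and $\eta\in\Sigma\Omega^q_Y$. Multiplying by $h$ and pairing with $\delta$ gives
\[
g\,\ideal{\delta,h\omega} = \ideal{\delta,\alpha_X\wedge\xi} + h\,\ideal{\delta,\eta}.
\]
The first summand lies in $\I_C$ by the second equality in Lemma~\ref{102}, which characterizes $\Der^q(-\log X)$ by $\ideal{\delta,\alpha_X\wedge\Omega^{q-k}_Y}\subseteq\I_C$. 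For the second summand, $\eta\in\Sigma\Omega^q_Y$ and $\Sigma\cdot\ideal{\delta,\Omega^q_Y}\subseteq\Sigma$ force $\ideal{\delta,\eta}\in\Sigma$, so $h\ideal{\delta,\eta}\in h\Sigma=\I_C$.

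Combining, $g\,\ideal{\delta,h\omega}\in\I_C$, and since $g$ is a non zero-divisor in $\OO_C$ this yields $\ideal{\delta,h\omega}\in\I_C$, whence $\ideal{\delta,\omega}\in\frac{1}{h}\I_C=\Sigma$. There is no real obstacle here; the only subtlety is to use the \emph{second} characterization of $\Der^q(-\log X)$ in Lemma~\ref{102} (with values in $\I_C$ rather than $\I_X$), as this is precisely what lets one match $\I_C$ with $h\Sigma$ and conclude membership in $\Sigma$. This is the same pattern as in Lemma~\ref{55}, so in fact the two proofs can be unified once Lemma~\ref{102} is in place.
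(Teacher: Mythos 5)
Your proof is correct and is exactly the argument the paper intends: the lemma is stated with a \qed precisely because it follows from Proposition~\ref{87} and the $\I_C$-valued characterization in Lemma~\ref{102} by the same computation as Lemma~\ref{55}, which is what you carry out. No issues.
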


%%%%%%%%%%%%%%%%%%%%%%%%%%%%%%%%%%%%%%%%%%%%%%%%%%%%%%%%%%%%%%%%%%%%%%%%%%%%%%%

\begin{lem}\label{90}
There is an equality $\Der^q(-\log X)=\Omega^q(\log X/C)^\Sigma$.\qed
\end{lem}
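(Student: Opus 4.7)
The plan is to establish the equality by two opposite inclusions, working inside the ambient module $\QQ_Y \otimes_{\OO_Y} \Theta_Y^q$ throughout so that both sides can be identified as submodules of $\Theta_Y^q$.

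First I would handle the inclusion $\Der^q(-\log X) \subseteq \Omega^q(\log X/C)^\Sigma$. This is immediate from Lemma~\ref{103}: each $\delta \in \Der^q(-\log X)$ gives a well-defined $\OO_Y$-linear map $\Omega^q(\log X/C) \to \Sigma$ by contraction, so $\delta \in \Omega^q(\log X/C)^\Sigma$.

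The reverse inclusion requires more care. I would first identify $\Omega^q(\log X/C)^\Sigma$ as a submodule of $\Theta_Y^q$. Since $\Sigma\Omega_Y^q \subseteq \Omega^q(\log X/C)$ by Definition~\ref{96}, dualizing with $-^\Sigma$ and applying Lemma~\ref{86} yields
\[
\Omega^q(\log X/C)^\Sigma \subseteq (\Sigma\Omega_Y^q)^\Sigma = (\Omega_Y^q)^* = \Theta_Y^q,
\]
so any $\delta \in \Omega^q(\log X/C)^\Sigma$ is genuinely a $q$-vector field. Now for any $\xi \in \Omega_Y^{q-k}$, the form $\omega := \frac{\alpha_X}{h} \wedge \xi$ lies in $\Omega^q(\log X/C)$ by Proposition~\ref{87} (taking $g=1$, $\eta=0$). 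Hence $\langle\delta,\omega\rangle \in \Sigma$, and multiplying by $h$ gives $\langle\delta, \alpha_X \wedge \xi\rangle \in h\Sigma = \I_C$. Since $\xi$ is arbitrary, Lemma~\ref{102} (specifically the characterization $\Der^q(-\log X) = \{\delta \in \Theta_Y^q \mid \langle\delta, \alpha_X \wedge \Omega_Y^{q-k}\rangle \subseteq \I_C\}$) yields $\delta \in \Der^q(-\log X)$, completing the proof.

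The only potentially subtle step is the identification of $\Omega^q(\log X/C)^\Sigma$ with a submodule of $\Theta_Y^q$, but this is handled cleanly by Lemma~\ref{86} applied to the inclusion $\Sigma\Omega_Y^q \subseteq \Omega^q(\log X/C)$. The argument mirrors the proof of $\Der^q(-\log C) = \Omega^q(\log C)^\Sigma$ in Proposition~\ref{77}, with $\frac{d\ul h}{h}\wedge\xi$ replaced by $\frac{\alpha_X}{h}\wedge\xi$, and Proposition~\ref{87} playing the role of Proposition~\ref{53}.
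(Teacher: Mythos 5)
Your proof is correct and is precisely the argument the paper intends: the lemma is stated with its proof omitted as the analogue of the equality $\Der^q(-\log C)=\Omega^q(\log C)^\Sigma$ in Proposition~\ref{77}, obtained by replacing $\frac{d\ul h}{h}\wedge\xi$ with $\frac{\alpha_X}{h}\wedge\xi$ and invoking Proposition~\ref{87}, Lemma~\ref{102} and Lemma~\ref{103} exactly as you do. Your explicit handling of the identification $\Omega^q(\log X/C)^\Sigma\subseteq\Theta_Y^q$ via Lemma~\ref{86} matches the torsion argument at the start of the proof of Proposition~\ref{77}.
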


%%%%%%%%%%%%%%%%%%%%%%%%%%%%%%%%%%%%%%%%%%%%%%%%%%%%%%%%%%%%%%%%%%%%%%%%%%%%%%%

The following proposition extends Proposition~\ref{77} and includes the counterpart of Lemma~\ref{98}.

\begin{prp}\label{94}
There are chains of $\OO_Y$-submodules of $\QQ_Y\otimes_{\OO_Y}\Omega_Y^q$ and $\QQ_Y\otimes_{\OO_Y}\Theta_Y^q$ 	
\begin{gather*}
\Omega^q_Y\subseteq\Sigma\Omega^q_Y\subseteq\Omega^q(\log X/C)\subseteq\Omega^q(\log C)\subseteq\Omega^q_Y(D)\subseteq \Sigma\Omega^q_Y(D),\\
\Sigma\Theta^q_Y\supseteq\Theta^q_Y\supseteq\Der^q(-\log X)\supseteq\Der^q(-\log C)\supseteq\I_C\Theta^q_Y\supseteq\Theta^q_Y(-D)
\end{gather*}
that are $\Sigma$-duals of each other.
\end{prp}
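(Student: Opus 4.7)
The plan is to follow the strategy of Proposition~\ref{77}. First, I would assemble the inclusions in both chains: the outer parts are exactly the chains of Proposition~\ref{77}, while the new middle inclusions are $\Sigma\Omega_Y^q \subseteq \Omega^q(\log X/C) \subseteq \Omega^q(\log C)$ recorded in Definition~\ref{96}, and $\Der^q(-\log C) \subseteq \Der^q(-\log X)$ provided by Lemma~\ref{102}.

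Next, I would apply $-^\Sigma$ to each chain and match up the resulting terms. Tensoring with $\QQ_Y$ makes either chain collapse, so the cokernels are torsion and $\Sigma$-dualizing again yields chains of $\OO_Y$-submodules of $\QQ_Y\otimes_{\OO_Y}\Theta_Y^q$ and $\QQ_Y\otimes_{\OO_Y}\Omega_Y^q$. Using Lemma~\ref{86}, the identifications already established in Proposition~\ref{77}, and Lemma~\ref{90}, every dual is recognized on the nose, with a single exception: dualizing the vector fields chain leaves $\Der^q(-\log X)^\Sigma$ to be identified with $\Omega^q(\log X/C)$. One inclusion $\Omega^q(\log X/C) \subseteq \Der^q(-\log X)^\Sigma$ is just Lemma~\ref{103}.

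The reverse inclusion is the analogue of Lemma~\ref{98}. Given $\omega\in\Der^q(-\log X)^\Sigma$, the inclusion $\Der^q(-\log C)\subseteq\Der^q(-\log X)$ combined with Proposition~\ref{77} already yields $\omega\in\Der^q(-\log C)^\Sigma=\Omega^q(\log C)$, so by Lemma~\ref{76} it suffices to prove that $\I_X\omega\subseteq\Sigma\Omega_Y^q$. The key observation is that $\I_X\Theta_Y^q\subseteq\Der^q(-\log X)$: this is immediate from Definition~\ref{101}, since for $f\in\I_X$ and $\eta\in\Theta_Y^q$ every pairing $\ideal{f\eta,\cdot}$ lands in $f\OO_Y\subseteq\I_X$. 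Writing $\omega=\sum_{\ul k\in N^q_<}\frac{a_{\ul k}}{h}dx_{\ul k}$ with $a_{\ul k}\in\OO_Y$ and evaluating the hypothesis against $f\frac{\partial}{\partial x_{\ul k}}\in\Der^q(-\log X)$ for $f\in\I_X$ and $\ul k\in N^q_<$ gives, via \eqref{120}, $fa_{\ul k}/h\in\Sigma$ and hence $fa_{\ul k}\in h\Sigma=\I_C$, so $f\omega\in\Sigma\Omega_Y^q$ as required.

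The main obstacle is precisely this counterpart of Lemma~\ref{98}. In Proposition~\ref{77} the explicit vector fields built from partials of the $h_\ell$ detected the defining condition for $\Omega^q(\log C)$ by a direct computation of $dh_\ell\wedge\omega$. Here we would rather not re-derive the differential condition $d\I_X\wedge\omega\subseteq\Sigma\Omega_Y^{q+1}$ from scratch; routing the argument through Lemma~\ref{76} and exploiting the (trivial but crucial) inclusion $\I_X\Theta_Y^q\subseteq\Der^q(-\log X)$ reduces the entire verification to the scalar relations $fa_{\ul k}\in\I_C$.
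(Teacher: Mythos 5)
Your proof is correct, but it replaces the paper's key step by a genuinely different argument. The paper obtains the remaining identification $\Der^q(-\log X)^\Sigma=\Omega^q(\log X/C)$ as a consequence of $\Sigma$-reflexivity of $\Omega^q(\log X/C)$, which it deduces from the abstract machinery of \S\ref{65}: the module $\Omega^q(\log X/C)(-D)$ is the $\I_X$-restriction of the $\I_C$-reflexive module $\Omega^q(\log C)(-D)$ (Lemmas~\ref{86} and \ref{76}, \eqref{113}, \eqref{114}), and Proposition~\ref{72} shows that $J$-restriction preserves $I$-reflexivity; combined with Lemma~\ref{90} this gives $\Der^q(-\log X)^\Sigma=\Omega^q(\log X/C)^{\Sigma\Sigma}=\Omega^q(\log X/C)$. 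You instead prove the inclusion $\Der^q(-\log X)^\Sigma\subseteq\Omega^q(\log X/C)$ directly: since $\Der^q(-\log C)\subseteq\Der^q(-\log X)$ you land in $\Omega^q(\log C)$ by Proposition~\ref{77}, and then Lemma~\ref{76} reduces the claim to $\I_X\omega\subseteq\Sigma\Omega_Y^q$, which follows by pairing against the elements $f\frac{\partial}{\partial x_{\ul k}}$ of $\I_X\Theta_Y^q\subseteq\Der^q(-\log X)$ and using $h\Sigma=\I_C$. This computation is sound (the evaluation $\ideal{f\frac{\partial}{\partial x_{\ul k}},\omega}=fa_{\ul k}/h\in\Sigma$ does give $fa_{\ul k}\in\I_C$), and it is the exact analogue of Lemma~\ref{98} with the colon description of Lemma~\ref{76} absorbing the differential condition. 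What your route buys is a short, self-contained verification that avoids Proposition~\ref{72} and the localization-at-associated-primes arguments behind it; what the paper's route buys is that the same reflexivity transfer is needed anyway to feed Corollary~\ref{61} and Theorem~\ref{0}, so no extra work is spent on this proposition. Both arguments rely on Lemma~\ref{90} for the other identification $\Omega^q(\log X/C)^\Sigma=\Der^q(-\log X)$, which you correctly cite.
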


\begin{proof}
By Lemma~\ref{86} and Proposition~\ref{77} $M$ in \eqref{113} is $I$-reflexive.
By Proposition~\ref{72} and \eqref{114} $\Omega^q(\log X/C)(-D)$ is therefore $\I_C$-reflexive and, again by Lemma~\ref{86}, $\Omega^q(\log X/C)$ $\Sigma$-reflexive.
The claim follows with Proposition~\ref{77} and Lemmas~\ref{102} and \ref{90}.
\end{proof}

%%%%%%%%%%%%%%%%%%%%%%%%%%%%%%%%%%%%%%%%%%%%%%%%%%%%%%%%%%%%%%%%%%%%%%%%%%%%%%%

\begin{dfn}
Contraction with $\alpha_X$ defines a map
\[
\alpha^X\colon\Theta_Y^q\to\OO_X\otimes_{\OO_Y}\Theta_Y^{q-k}=\Hom_{\OO_Y}(\Omega_Y^{q-k},\OO_X),\quad\delta\mapsto(\omega\mapsto\ol{\ideal{\delta,\alpha_X\wedge\omega}}).
\]
Taking $p+q=n$ we define the \emph{$p$th Jacobian module of $X$} as the $\OO_X$-module
\[
\J_X^p:=\alpha^X(\Theta_Y^{q}).
\]
\end{dfn}

The Jacobian module $\J_X^{\dim X}$ agrees with Pol's Jacobian ideal $\J_{X/C}$ (see \cite[Not.~4.2.14]{Pol16}) which coincides with the $\omega$-Jacobian ideal if $X$ is Gorenstein (see \cite[Prop.~4.2.34]{Pol16}).

%%%%%%%%%%%%%%%%%%%%%%%%%%%%%%%%%%%%%%%%%%%%%%%%%%%%%%%%%%%%%%%%%%%%%%%%%%%%%%%

\begin{rmk}\label{106}
In explicit terms
\[
\alpha^X\colon\Theta^q_Y\to\bigoplus_{\ul i\in N^{q-k}_<}\OO_Xdx_{\ul i},\quad
\delta\mapsto\sum_{\ul i\in N^{q-k}_<}\ideal{\delta,\alpha_X\wedge dx_{\ul i}}dx_{\ul i}.
\]
In case $X=C$, $\alpha_C=d\ul h$ and  
\[
\ideal{\delta,d\ul h\wedge dx_{\ul i}}
=\sum_{\ul j\in N^q_<}
\frac{\partial(\ul h,x_{\ul i})}{\partial x_{\ul j}}\ideal{\delta,dx_{\ul j}}.
\]
In particular, $\J_C^{\dim C}$ is the Jacobian ideal of $C$.
\end{rmk}

%%%%%%%%%%%%%%%%%%%%%%%%%%%%%%%%%%%%%%%%%%%%%%%%%%%%%%%%%%%%%%%%%%%%%%%%%%%%%%%

\begin{lem}\label{124}
If $k\le q\le n$, then $\omega_X^{q-k}\ne0$ and, unless $q=n$, $\OO_X\otimes\alpha^X$ is not injective.
\end{lem}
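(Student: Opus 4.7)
The plan is to reduce both statements to a local computation at a generic point of $X$. Since $X$ is reduced of pure dimension $n-k$, its smooth locus $X_{\mathrm{sm}}$ is open and dense; and $X$ and $X'=\ol{C\setminus X}$, being the pure-dimensional pieces of the complete intersection $C$ with no common components (possibly $X'=\emptyset$, in which case $X=C$), intersect in dimension strictly less than $\dim X$. So I would fix a point $\pp\in X_{\mathrm{sm}}\setminus X'$. By \eqref{68} applied at $\pp$, $\OO_{C,\pp}=\OO_{X,\pp}$ and consequently $\I_{C,\pp}=\I_{X,\pp}$. After a local change of coordinates on $Y$ I may assume $\I_{X,\pp}=(x_1,\dots,x_k)$, in which case $u:=\det(\partial h_i/\partial x_j)_{1\le i,j\le k}$ is a unit at $\pp$ and Definition~\ref{100} yields $\ol{\alpha_X}=u\cdot\ol{dx_1\wedge\dots\wedge dx_k}$ in $\OO_{X,\pp}\otimes_{\OO_Y}\Omega_Y^k$.

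For the first claim I would use Lemma~\ref{92} to view $\omega_X^{q-k}\subseteq\omega_C^{q-k}$ and then recall the standard fact (which one can also read off from Kersken's description~\eqref{122} at $\pp$) that on the smooth locus of $X$ the module $\omega_X^{q-k}$ coincides with the Kähler module $\Omega_X^{q-k}$. Localized at $\pp$, this is free of rank $\binom{n-k}{q-k}$ over the regular local ring $\OO_{X,\pp}$ of dimension $n-k$, hence nonzero precisely when $0\le q-k\le n-k$, i.e., $k\le q\le n$.

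For the second claim I would compute $\OO_X\otimes\alpha^X$ in the standard bases $\set{\partial/\partial x_{\ul j}\xmid\ul j\in N^q_<}$ and $\set{\partial/\partial x_{\ul i}\xmid\ul i\in N^{q-k}_<}$. The pairing~\eqref{120} together with $\ol{\alpha_X}=u\cdot\ol{dx_1\wedge\dots\wedge dx_k}$ shows that $(\OO_X\otimes\alpha^X)_\pp(\partial/\partial x_{\ul j})$ vanishes at $\pp$ unless $\set{1,\dots,k}\subseteq\set{j_1,\dots,j_q}$, in which case it equals $\pm u\cdot\partial/\partial x_{\ul j'}$ with $\ul j'\in N^{q-k}_<$ enumerating $\set{j_1,\dots,j_q}\setminus\set{1,\dots,k}$. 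Consequently the kernel of $(\OO_X\otimes\alpha^X)_\pp$ is free of rank $\binom{n}{q}-\binom{n-k}{q-k}$, and this difference vanishes iff $n(n-1)\cdots(n-k+1)=q(q-1)\cdots(q-k+1)$, iff $q=n$. So for $k\le q<n$ the kernel is nonzero at $\pp$, and $\OO_X\otimes\alpha^X$ fails to be injective.

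The main obstacle is producing the clean local model at $\pp$: once $\pp\in X_{\mathrm{sm}}\setminus X'$ is secured and coordinates adapted to $\I_X$ are chosen so that $\ol{\alpha_X}$ becomes a unit multiple of $\ol{dx_1\wedge\dots\wedge dx_k}$, both parts reduce to the standard identification $\omega_X^\bullet=\Omega_X^\bullet$ on $X_{\mathrm{sm}}$ and an elementary binomial comparison.
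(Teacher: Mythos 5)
Your proposal is correct and takes essentially the same route as the paper: the paper's proof also checks both claims at a smooth point of $X$ where $X=C$ locally, takes $\ul h=(x_1,\dots,x_k)$ as part of a coordinate system so that $\alpha_X$ becomes (a unit times) $dx_1\wedge\dots\wedge dx_k$, identifies $\omega_X^{q-k}=\Omega_X^{q-k}\ne0$ there, and exhibits the basis vectors $\frac{\partial}{\partial x_{\ul j}}$ with $\set{1,\dots,k}\not\subseteq\set{j_1,\dots,j_q}$ in the kernel of $\OO_X\otimes\alpha^X$. Your write-up only adds the explicit unit $u$ and the binomial rank count, which are harmless elaborations of the same argument.
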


\begin{proof}
This can be checked at smooth points of $X=C$ where $\ul h=(x_1,\dots,x_k)$ and $\alpha_X=d\ul h$.
Here $\omega_X^{q-k}=\Omega_X^{q-k}\ne0$ and $0\ne\frac\partial{\partial x_{\ul j}}\in\ker(\OO_X\otimes\alpha^X)$ if $\set{1,\dots,k}\not\subseteq\set{j_1,\dots,j_q}$.
\end{proof}

%%%%%%%%%%%%%%%%%%%%%%%%%%%%%%%%%%%%%%%%%%%%%%%%%%%%%%%%%%%%%%%%%%%%%%%%%%%%%%%

By Lemma~\ref{102} there is a short exact sequence (see \cite[Prop.~4.2.16]{Pol16} for $q=k$)
\begin{equation}\label{112}
\xymatrix{
0 & \J_X^{n-q}\ar[l] & \Theta^q_Y\ar[l]_-{\alpha^X} & \Der_Y^q(-\log X)\ar[l] & 0.\ar[l]
}
\end{equation}

%%%%%%%%%%%%%%%%%%%%%%%%%%%%%%%%%%%%%%%%%%%%%%%%%%%%%%%%%%%%%%%%%%%%%%%%%%%%%%%

\begin{lem}\label{107}
There is a pairing
\[
\J_X^{n-q}\otimes\omega_X^{q-k}\to\Hom_{\OO_C}(\OO_X,\OO_C)(D)=\omega_X,\quad
\left(\alpha^X(\delta),\res_{X/C}^q(\omega)\right)\mapsto\ideal{\delta,\omega}.
\]
\end{lem}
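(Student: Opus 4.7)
The plan is to define the pairing at the level of $(\delta,\omega)\in\Theta_Y^q\times\Omega^q(\log X/C)$ by $(\delta,\omega)\mapsto\ideal{\delta,\omega}$ (using the perfect pairing \eqref{115}) and then to verify three facts which together say that this factors through the two quotient maps $\Theta_Y^q\onto\J_X^{n-q}$ and $\Omega^q(\log X/C)\onto\omega_X^{q-k}$ coming from the short exact sequences \eqref{112} and the bottom row of \eqref{110}, and that the image sits inside $\omega_X=\Hom_{\OO_C}(\OO_X,\OO_C)(D)\subseteq\OO_C(D)=\OO_Y(D)/\Sigma$.

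First I would observe that $\ideal{\delta,\omega}\in\OO_Y(D)$, since $\delta\in\Theta^q_Y$ and $\omega\in\Omega^q(\log X/C)\subseteq\Omega^q_Y(D)$. Next, I would check that its class in $\OO_C(D)=\OO_Y(D)/\Sigma$ actually lies in $\omega_X$, equivalently that $\I_X\ideal{\delta,\omega}\subseteq\Sigma$. This is immediate from Definition~\ref{96}: for any $a\in\I_X$, $a\omega\in\Sigma\Omega_Y^q$ and hence $a\ideal{\delta,\omega}=\ideal{\delta,a\omega}\in\Sigma$, using $\OO_Y$-linearity of the contraction.

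Next I would verify the two compatibility conditions needed to descend the pairing. If $\delta\in\Der^q(-\log X)$, then by Lemma~\ref{103} $\ideal{\delta,\omega}\in\Sigma$, so the class in $\omega_X$ is zero; this handles the kernel of $\alpha^X$ by \eqref{112}. If instead $\omega\in\Sigma\Omega^q_Y$, then $\omega=\sum_i(h_i/h)\omega_i$ with $\omega_i\in\Omega^q_Y$, so $\ideal{\delta,\omega}=\sum_i(h_i/h)\ideal{\delta,\omega_i}\in\Sigma$; this handles the kernel of $\res_{X/C}^q$ by exactness of the bottom row of \eqref{110}. Consequently the map induces a well-defined $\OO_Y$-bilinear map $\J_X^{n-q}\otimes\omega_X^{q-k}\to\omega_X$ sending $(\alpha^X(\delta),\res_{X/C}^q(\omega))$ to the class of $\ideal{\delta,\omega}$ in $\omega_X$. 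Since $\I_X$ annihilates both $\J_X^{n-q}$ and $\omega_X^{q-k}$, this is automatically an $\OO_X$-bilinear map.

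There is no real obstacle here: the only point requiring a moment's thought is the containment of $\ideal{\delta,\omega}$ in $\omega_X$ rather than only in $\OO_C(D)$, and this is exactly the first defining condition for $\Omega^q(\log X/C)$ in Definition~\ref{96}. The formula $(\alpha^X(\delta),\res_{X/C}^q(\omega))\mapsto\ideal{\delta,\omega}$ in the statement may be read as an equality in $\omega_X\subseteq\OO_C(D)$ modulo $\Sigma$, compatibly with the description of $\res_{X/C}^q$ in Proposition~\ref{87}: if $g\omega=\frac{\alpha_X}{h}\wedge\xi+\eta$ with $\eta\in\Sigma\Omega_Y^q$, then $gh\ideal{\delta,\omega}\equiv\ideal{\delta,\alpha_X\wedge\xi}\pmod{\I_C}$, exhibiting the pairing as the contraction of $\alpha^X(\delta)\in\OO_X\otimes_{\OO_Y}\Theta_Y^{q-k}$ with $\xi/g\in\omega_X^{q-k}$, viewed in $\omega_X$.
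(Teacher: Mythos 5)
Your proposal is correct and follows essentially the same route as the paper: descend the perfect pairing \eqref{115} modulo $\Sigma$ using Lemma~\ref{103} for the $\Der^q(-\log X)$ factor and the trivial containment for $\Sigma\Omega_Y^q$, then land in $\omega_X\subseteq\omega_C$ via annihilation by $\I_X$. The only cosmetic difference is that you verify $\I_X\ideal{\delta,\omega}\subseteq\Sigma$ directly from Definition~\ref{96}, whereas the paper first produces the pairing into $\omega_C=\OO_C(D)$ and then applies $\Hom_{\OO_C}(\OO_X,-)$ using that both modules are supported on $X$ --- the same fact in different packaging.
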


\begin{proof}
By Lemma~\ref{103} the pairing $\Omega_Y^q(D)\times\Theta_Y^q\to\OO_Y(D)$ obtained from \eqref{115} maps both $\Omega_Y^q(\log X/C)\times\Der_Y^q(-\log X)$ and $\Sigma\Omega_Y^q\otimes\Theta_Y^q$ to $\Sigma$.
Using the bottom row of \eqref{110} and \eqref{112} this yields a pairing $\J_X^{n-q}\otimes\omega_X^{q-k}\to\OO_Y(D)/\Sigma=\OO_C(D)=\omega_C$.
Both $\J_X^{n-q}$ and $\omega_X^{q-k}$ are supported on $X$ and applying $\Hom_{\OO_C}(\OO_X,-)$ yields the claim (see \eqref{116}).
\end{proof}

%%%%%%%%%%%%%%%%%%%%%%%%%%%%%%%%%%%%%%%%%%%%%%%%%%%%%%%%%%%%%%%%%%%%%%%%%%%%%%%
\medskip
We can now prove our main application.

\begin{proof}[Proof of the Theorem~\ref{0}]
By Lemmas~\ref{86} and \ref{90} sequence \eqref{112} in terms of \eqref{113} is the $I$-dual $J$ restriction~\eqref{109} twisted by $D$, that is, $V^T=\J_X^{n-q}$ and $\alpha^T=\alpha^X$ up to a twist by $D$.
With \eqref{114} and Lemma~\ref{124} the claim now reduces to Corollary~\ref{61}.
The identifications are induced by the pairing in Lemma~\ref{107}.
\end{proof}

%%%%%%%%%%%%%%%%%%%%%%%%%%%%%%%%%%%%%%%%%%%%%%%%%%%%%%%%%%%%%%%%%%%%%%%%%%%%%%%

\begin{prp}\label{104}
The $\OO_X$-modules $\J_X^{n-q}$ depend only on $X$.
\end{prp}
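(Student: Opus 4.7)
The plan is to show that $\J_X^{n-q}$ as an abstract $\OO_X$-module does not depend on the complete intersection $C$, the fundamental form $\alpha_X$, or the ambient smooth germ $Y$.

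For independence from $C$ and $\alpha_X$, combine the short exact sequence~\eqref{112} with the first equality of Lemma~\ref{102} to realize $\J_X^{n-q}\cong\Theta_Y^q/\Der^q(-\log X)$ as $\OO_X$-modules. Definition~\ref{101} describes $\Der^q(-\log X)$ purely in terms of $\I_X$ and $\Omega_Y^{q-k}$, with no reference to $C$ or $\alpha_X$, so both quotient and module depend only on $X$ and $Y$.

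For independence from $Y$, use the standard fact that any two embeddings of the germ $X$ into smooth germs are comparable up to multiplication by a smooth factor. Hence it suffices to verify that enlarging $X\subseteq Y$ to $X\subseteq Y':=Y\times(\CC^m,0)$ via $y\mapsto(y,0)$ does not change $\J_X^{n-q}$. With coordinates $y_1,\dots,y_m$ on the new factor and defining equations $\ul h$ of $C\subseteq Y$, the complete intersection $C':=C\times\{0\}\subseteq Y'$ has defining equations $(\ul h,y_1,\dots,y_m)$, and Definition~\ref{100} yields $\alpha_{X/C'/Y'}:=\alpha_{X/C/Y}\wedge dy_1\wedge\dots\wedge dy_m$ as a fundamental form of $X$ in $Y'$ (using $\QQ_{C'}=\QQ_C$ and that $\mathbf{1}_X$ is unchanged).

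The key claim is then the equality $\alpha^X_{Y'}(\Theta_{Y'}^{q+m})=\alpha^X_Y(\Theta_Y^q)$ inside the natural inclusion $\OO_X\otimes_{\OO_Y}\Theta_Y^{q-k}\into\OO_X\otimes_{\OO_{Y'}}\Theta_{Y'}^{q-k}$. For $\delta'\in\Theta_{Y'}^{q+m}$ and $\omega\in\Omega_{Y'}^{q-k}$, the contraction $\ideal{\delta',\alpha_{X/C'/Y'}\wedge\omega}$ vanishes unless $\omega$ has empty $dy$-content and $\delta'$ has a component containing $\frac{\partial}{\partial y_1}\wedge\dots\wedge\frac{\partial}{\partial y_m}$ in full. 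Writing that component as $\delta_0\wedge\frac{\partial}{\partial y_1}\wedge\dots\wedge\frac{\partial}{\partial y_m}$ with $\delta_0\in\Theta_{Y'}^q$ purely in the $x$-directions, the contraction evaluates to $\pm\ideal{\delta_0,\alpha_{X/C/Y}\wedge\omega}$; reducing modulo $\I_{X/Y'}=\I_{X/Y}+(y_1,\dots,y_m)$ substitutes $y=0$ in the $\OO_{Y'}$-coefficients of $\delta_0$ and places it in $\Theta_Y^q$ modulo $\I_{X/Y}$. Conversely, each $\delta\in\Theta_Y^q$ lifts to $\delta\wedge\frac{\partial}{\partial y_1}\wedge\dots\wedge\frac{\partial}{\partial y_m}\in\Theta_{Y'}^{q+m}$ with $\alpha^X_{Y'}$-image equal to $\pm\alpha^X_Y(\delta)$, establishing the reverse inclusion.

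The main technical care lies in the bookkeeping, namely verifying that $\alpha^X_{Y'}(\Theta_{Y'}^{q+m})$ lies within the submodule $\OO_X\otimes_{\OO_Y}\Theta_Y^{q-k}$ of its natural target so that the comparison makes sense, and handling signs coming from the wedge products of partial derivatives.
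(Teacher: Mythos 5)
Your argument is correct and follows the paper's own route: independence of $C$ via the identification $\J_X^{n-q}\cong\Theta_Y^q/\Der^q(-\log X)$ coming from \eqref{112} and Lemma~\ref{102}, then stabilization of embeddings by a smooth factor with the compatible choice $\alpha_{X/C'/Y'}=\alpha_{X/C/Y}\wedge dy_1\wedge\dots\wedge dy_m$. The only cosmetic difference is that you compare the images of the two contraction maps directly inside a common ambient module, whereas the paper packages the same bookkeeping as a cartesian square yielding $\Theta_{Y'}^{q+m}/\Der_{Y'}^{q+m}(-\log X)\cong\Theta_Y^q/\Der_Y^q(-\log X)$; you leave implicit, but should state in one line as the paper does, that an isomorphism of (minimal) embeddings induces an isomorphism of these quotients over $\OO_X$.
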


\begin{proof}
We identify $\J_X^{n-q}=\Theta^q_Y/\Der_Y^q(-\log X)$ by the exact sequence~\eqref{112}.
Any isomorphism $Y'\cong Y$ of minimal embeddings of $X$ induces an isomorphism $\varphi\colon\OO_Y\cong\OO_{Y'}$ over $\OO_X$ identifying $\I_{X/Y}\cong\I_{X/Y'}$.
There are induced compatible isomorphisms $\Theta_Y^q\cong\Theta_{Y'}^q$ and $\Omega_Y^p\cong\Omega_{Y'}^p$ over $\varphi$ resulting in an isomorphism over $\varphi$
\[
\Der_Y^q(-\log X)\cong\Der_{Y'}^q(-\log X).
\]

Any general embedding $X\subseteq Y'$ arises from a minimal embedding $X\subseteq Y$ up to isomorphism of the latter as $Y'=Y\times Z$ where $Z\cong(\CC^m,0)$ and hence
\[
\I_{X/Y'}=\OO_Y\hat\otimes\mm_Z+\I_{X/Y}\hat\otimes\OO_Z.
\]
Pick coordinates $z_1,\dots,z_m$ on $Z$ and abbreviate $d\ul z:=dz_1\wedge\dots\wedge dz_m$ and $\frac\partial{\partial\ul z}:=\frac\partial{\partial z_1}\wedge\dots\wedge\frac\partial{\partial z_m}$.
Then there are decompositions
\[
\Omega_{Y'}^{q+m}=\OO_{Z}\hat\otimes\Omega_Y^q\wedge d\ul z\oplus\wt\Omega_{Y'}^{q+m},\quad
\Theta_{Y'}^{q+m}=\OO_{Z}\hat\otimes\Theta_Y^q\wedge\frac\partial{\partial\ul z}\oplus\wt\Theta_{Y'}^{q+m}
\]
where the modules with tilde are generated by basis elements not involving $d\ul z$ and $\frac\partial{\partial\ul z}$ respectively.
Fundamental forms of $X$ in $Y'$ and $Y$ can be chosen compatibly as
\[
\alpha_{X/C/Y'}=\alpha_{X/C/Y}\wedge d\ul z\in\Omega_{Y'}^{k+m}.
\]
With Lemma~\ref{102} this yields inclusions
\[
\Der_Y^q(-\log X)\wedge\frac\partial{\partial\ul z}+\wt\Theta_{Y'}^{q+m}\subseteq\Der_{Y'}^{q+m}(-\log X)\supseteq\I_{X/Y'}\Theta_{Y'}^{q+m}\supseteq\mm_Z\hat\otimes\Theta_{Y}^q\wedge\frac\partial{\partial\ul z}
\]
and a cartesian square
\[
\xymatrix{
\OO_Z\hat\otimes\Theta_Y^q\ar@{^(->}[r]^-{-\wedge\frac\partial{\partial\ul z}} & \Theta_{Y'}^{q+m}\\
\Der_Y^q(-\log X)+\mm_Z\hat\otimes\Theta_{Y}^q\ar@{^(->}[r]\ar@{^(->}[u] & \Der_{Y'}^{q+m}(-\log X).\ar@{^(->}[u]
}
\]
It gives rise to an isomorphism of $\OO_X$-modules
\begin{align*}
\Theta_{Y'}^{q+m}/\Der_{Y'}^{q+m}(-\log X)
&\cong\OO_Z\hat\otimes\Theta_Y^q/(\Der_Y^q(-\log X)+\mm_Z\hat\otimes\Theta_{Y}^q)\\
&\cong\Theta_Y^q/\Der_Y^q(-\log X).\qedhere
\end{align*}
\end{proof}

%%%%%%%%%%%%%%%%%%%%%%%%%%%%%%%%%%%%%%%%%%%%%%%%%%%%%%%%%%%%%%%%%%%%%%%%%%%%%%%
%\printbibliography
\bibliographystyle{amsalpha}
\bibliography{residual-arxiv}
%%%%%%%%%%%%%%%%%%%%%%%%%%%%%%%%%%%%%%%%%%%%%%%%%%%%%%%%%%%%%%%%%%%%%%%%%%%%%%%
\end{document}
%%%%%%%%%%%%%%%%%%%%%%%%%%%%%%%%%%%%%%%%%%%%%%%%%%%%%%%%%%%%%%%%%%%%%%%%%%%%%%%